\documentclass{article}
\usepackage{amssymb, amsthm, amsmath}
\usepackage{bm}
\usepackage[dvipsnames]{xcolor}
\definecolor{MutedIndigo}{HTML}{B8A9C9}
\definecolor{SageGreen}{HTML}{7E9F8A}
\definecolor{DustyRose}{HTML}{D8A1A9}
\definecolor{DustyRed}{HTML}{B8445A}
\definecolor{MutedLavender}{HTML}{847FC6}
\definecolor{SlateBlue}{HTML}{5F8FC9}
\definecolor{EggShell}{RGB}{244, 241, 222}
\definecolor{Graphite}{HTML}{7A6F82}
\definecolor{DarkGraphite}{HTML}{4A4E57}
\definecolor{espresso}{HTML}{3A2A24}
\definecolor{DirtyYellow}{HTML}{C6B44A}
\definecolor{OliveDrab}{HTML}{6B8E23}
\usepackage{tikz}
\usetikzlibrary{decorations.markings}
\usetikzlibrary{shapes.geometric,positioning}
\usetikzlibrary{arrows,matrix, fit, backgrounds}
\usetikzlibrary{decorations.pathmorphing, patterns,shapes, trees}
 \usetikzlibrary{calc}
 \tikzset{snake it/.style={decorate, decoration=snake}}
 \usetikzlibrary{decorations.pathreplacing}

\usepackage{hyperref}

\usepackage{todonotes}

\usepackage{stmaryrd}
\usepackage{pifont}
\theoremstyle{plain}
\newtheorem{thm}{Theorem}[section]
\newtheorem{theorem}[thm]{Theorem}
\newtheorem{lemma}[thm]{Lemma}

\newtheorem{proposition}[thm]{Proposition}

\newtheorem{prop}[thm]{Proposition}

\newtheorem*{claim*}{Claim}
\newtheorem*{thm*}{Theorem}

\newcommand{\incs}{\subsetneq}
 \usepackage{float }
 \usepackage{caption}
\theoremstyle{definition}
\newtheorem{definition}[thm]{Definition}
\newtheorem{convention}[thm]{Convention}

\newtheorem{remark}[thm]{Remark}

\newtheorem{example}[thm]{\sc Example}

\newcommand{\inc}{\subseteq}
\newcommand{\union}{\cup}	
\newcommand{\Union}{\bigcup}	
\newcommand{\set}[1]{\{#1\}}
\newcommand{\setc}[2]{\set{#1 \mid #2}}
\newcommand{\inter}{\cap}
\newcommand{\hyper}[1]{{\bf #1}}
\newcommand{\restrconstr}[2]{{#1}_{\lceil_{#2}}}
\newcommand{\Invt}{\widetilde{\mathrm{Inv}}}
\newcommand{\Inv}{{\mathrm{Inv}}}
 
\newcommand{\clandec}{\underline{\leadsto}}

\newcommand{\nin}{\not\in}
\newcommand{\restrH}[2]{\hyper{#1}\backslash #2}

 \newcommand{\squash}{<\!\!\!\!\lessdot\,\,}
\usepackage{centernot}
\usepackage[normalem]{ulem}
\newcommand{\subsetdot}{{\subset\!\!\!\!\cdot\,}}

\def\mathraise#1#2#3#4{
\mathchoice
{\raisebox{#2}{$\displaystyle{#1}$}}
{\raisebox{#2}{$#1$}}
{\raisebox{#3}{$\scriptstyle{#1}$}}
{\raisebox{#4}{$\scriptscriptstyle{#1}$}}
}

\def\mathraiseord#1#2#3#4{\mathord{\mathraise#1{#2}{#3}{#4}}}
\def\valup{\mathraiseord{\uparrow}{0.29ex}{0.21ex}{0.16ex}}

\usepackage[style=numeric, backend=biber, maxcitenames=5, maxbibnames=5]{biblatex}
\title{Generalised flip order on the faces of nestohedra}

 \usepackage{authblk}

\author[$^\dagger$]{ Pierre-Louis Curien}
\affil[$^\dagger$]{{\footnotesize Université Paris Cité, CNRS, Inria, IRIF, Picube project-team, Paris, France}}

\author[$^\ast$]{Bérénice Delcroix-Oger}
\affil[$^\ast$]{\footnotesize Université de Montpellier, CNRS, IMAG, Montpellier, France}

\author[$^\ddagger$]{Jovana Obradovi\' c } 
\affil[$^\ddagger$]{\footnotesize Mathematical Institute of the Serbian Academy of Sciences and Arts, Belgrade, Serbia}

\begin{document} 	\maketitle

\begin{abstract}
\noindent Classical shuffle products on permutations and binary planar rooted trees (i.e., on the vertices of permutohedra and associahedra) admit descriptions in terms of intervals in the weak Bruhat order and the Tamari order, respectively. Palacios and Ronco extended these products as well as their interval description to surjections and planar rooted trees (i.e., on all the faces of permutohedra and associahedra). In this article, we present a broad generalisation of this phenomenon. We show that the shuffle product on faces of certain families of nestohedra admits an interval description with respect to the {\em generalised flip order}, a partial order defined on the faces of nestohedra through elementary splitting and fusion operations on the tree-like combinatorial objects encoding them. The generalised flip order extends the flip order considered by Barnard and McConville from vertices to all faces of nestohedra. We further compare it with the facial weak order of Dermenjian-Hohlweg-Pilaud and the generalised Tamari order of Ronco, and we provide its characterisation in terms of (generalised) inversions. 
\end{abstract}

\noindent In this article we investigate a binary relation on the set of faces of a fixed nestohedron (a.k.a. hypergraph polytope) that was previously introduced by Curien and Laplante-Anfossi in \cite{CLA2}. Our first result is that this relation generates a partial order on the faces of a nestohedron. When restricted to faces of dimension 0 (i.e, to vertices), this order coincides with the flip order found in the work of  Barnard and McConville \cite{BM}. The latter generalises both the classical Tamari order on planar binary trees and the weak Bruhat order on permutations.
For this reason, we refer to the order studied here as the {\em generalised flip order}.
We note that the flip order of \cite{BM} was defined for graph associahedra, but its definition and acyclicity proof extend straightforwardly to arbitrary nestohedra. The genuinely new feature of the present work is the extension of this order from vertices to {\em all} faces. 

The generalised flip order is indeed also a generalisation (of a generalisation) of the above classical orders on planar binary trees and on permutations in another direction: not only to other polytopes, but also to all faces of these polytopes (rather than on their vertices only). Such an extension had been proposed already in \cite{KLNPS} for permutohedra and in \cite{PalaciosRonco} for both associahedra and permutohedra, whose faces are described by planar trees and by surjections, respectively. Our order is defined for any hypergraph polytope and instantiates precisely to those early appearances (see Remark \ref{extends-PR-K}).

\smallskip

Our motivation is partly algebraic. In earlier work \cite{PLBJ1}, we introduced $q$-tridendriform structures on the faces of certain families of 
hypergraph polytopes. The operations defining these structures assemble into shuffle products that generalise the classical shuffle products on surjections and planar rooted trees. In the present paper we show that these shuffle products can be characterised as sums over suitable intervals in the generalised flip order. This generalises results \cite{PalaciosRonco} of Palacios and Ronco for permutohedra and associahedra. 

\smallskip
But we also believe that the generalised flip order stands out on its own and deserves further study. In particular, we managed, under a suitable right-filledness condition, to characterise it in terms of generalised inversions, extending in a highly non-trivial way both the condition and the characterisation of the flip order given in \cite{BM}.

\medskip
 
In order to remain as self-contained as possible, we shall recall the necessary definitions on nestohedra, as well as the main definitions and results of \cite{PLBJ1} concerning the shuffle products on faces of nestohedra, in Sections \ref{nestohedra-reminders-section} and \ref{strict-reminders-section}, respectively.
The other sections contain our new results. In Section \ref{GFO-section}, we recall the generalised flip relation from \cite{CLA2} (which was given no name there) and show that it generates a partial order (Proposition \ref{lessdot-order}) whose restriction to the 0-dimensional faces coincides with the flip order as defined by Barnard and McConville (Proposition \ref{GFO-FO}). In \S \ref{inversion-subsection}, we also provide a characterisation of the generalised flip order in terms of generalised inversions. In Section \ref{shuffle-order-section}, we state and prove our characterisation of the operations of Section \ref{strict-reminders-section} in terms of intervals.
Section \ref{comparison-section} is a discussion section. In particular, we offer
 some comparisons with other orders that have been introduced on all faces of nestohedra.

\paragraph{Acknowledgements.}{{The authors would like to express their gratitude to Mar\'{\i}a Ronco and Vincent Pilaud for insightful discussions regarding various poset structures on faces of various classes of polytopes.}
}

\medskip

\noindent {\bf Notation}.
 Throughout the paper, for a finite set $X$ endowed with a total order and subsets $Y,Z\inc X$ (which are total orders themselves for the induced ordering), we shall write $Y <Z$ (or $Z>Y$) to denote that $\max(Y)<\min(Z)$. We also write $|X|$ for the cardinality of $X$. 

\section{Reminders on nestohedra} \label{nestohedra-reminders-section}

We recall from \cite{COI} and \cite{DP} the combinatorial description of nestohedra in terms of hypergraphs and their associated tree-like structures, called constructs. We also recall from \cite{PLBJ1} the restriction operation on constructs, which will play an important role in the sequel.

\subsection{Hypergraphs} \label{hypergraphs-subsection}
A hypergraph consists of a set $H$ of {\em vertices} and a subset 
$\hyper{H}\inc {\mathcal{P}}(H)\backslash\emptyset$, whose elements are called {\em hyperedges}, such that $\Union \hyper{H}=H$. In addition, we assume that each hypergraph is {\em atomic}, meaning that 
 $\set{x}\in \hyper{H}$, for all $x\in H$. By the atomicity assumption, the vertices of a hypergraph can be seen as its hyperedges of 
 cardinality $1$. This justifies the convention, which we take from now on, to denote with boldface letter ${\bf H}$ both an entire hypergraph and its set of hyperedges, while $H$ refers to the set of vertices (or singleton hyperedges) of ${\bf H}$.

\smallskip

 A hyperedge of cardinality 2 (of a hypergraph ${\bf H}$) is called an {\em edge} (of ${\bf H}$). Every ordinary graph $(V,E)$ can be viewed as the atomic hypergraph
$\setc{\set{v}}{v\in V} \union \setc{e}{e\in E}$ (with no hyperedges of cardinality $\geq 3$).

\begin{example}\label{hypergraphex1} The hypergraph 
 $${\bf H}^\maltese=\{\{1\},\{2\},\{3\},\{4\},\{3,4\},\{2,3,4\},\{1,2,3\}\}$$ can be represented pictorially as follows:
\begin{center}
\begin{tikzpicture}
\draw [rounded corners=5mm,draw=black,fill=DustyRose,opacity=0.3] (-0.35,1.35)--(1.6,1.2)--(-0.2,-0.6)--cycle;
\draw [rounded corners=5mm,draw=black,fill=OliveDrab,opacity=0.3] (1.35,-0.35)--(1.2,1.6)--(-0.6,-0.2)--cycle;
\draw [rounded corners=5mm,draw=black] (-0.35,1.35)--(1.6,1.2)--(-0.2,-0.6)--cycle;
\draw [rounded corners=5mm,draw=black] (1.35,-0.35)--(1.2,1.6)--(-0.6,-0.2)--cycle;
\node (x) [circle,draw=none,inner sep=0mm,minimum size=1.3mm] at (0,1) {\small $1$};
\node (u) [circle,draw=none,inner sep=0mm,minimum size=1.3mm] at (1,1) {\small $2$};
\node (y) [circle,draw=none,inner sep=0mm,minimum size=1.3mm] at (0,0) {\small $3$};
\node (z) [circle,draw=none, inner sep=0mm,minimum size=1.3mm] at (1,0) {\small $4$};
\draw (y)--(z);
\end{tikzpicture}
\end{center}
Here, the hyperedge $\{1,2,3\}$ (resp. $\{2,3,4\}$) is represented by the circled-out area around the vertices $1$, $2$ and $3$ (resp. $2$, $3$ and $4$). 
 \end{example}
 
For a hypergraph $\hyper{H}$ and a subset $X\inc H$, we define the {\em restriction of ${\bf H}$ to $X$} as the hypergraph
$\hyper{H}_X:=\setc{Z}{Z\in \hyper{H}\;\mbox{and}\; Z\inc X}$. We introduce the shorthand $\restrH{H}{X}:=\hyper{H}_{H\backslash X}$ for the restriction of ${\bf H}$ to $H\backslash X$.
We say that $\hyper{H}$ is {\em connected} if there is no non-trivial partition $H=X_1\union X_2$ such that $\hyper{H}=\hyper{H}_{X_1}\union \hyper{H}_{X_2}$, and that $X\inc H$ is connected in, or is a {\em tube} of $\hyper{H}$, if $\hyper{H}_X$ is connected, and then we often write $\hyper{X}=\hyper{H}_X$.
 
\smallskip

Every finite hypergraph admits a unique partition
$H=X_1\union\cdots\union X_m$ such that each $\hyper{H}_{X_i}$ is connected and $\hyper{H}=\Union(\hyper{H}_{X_i})$. The $\hyper{H}_{X_i}$'s (or simply the $X_i$'s) are called the {\em connected components} of $\hyper{H}$. The notation
$\hyper{H},X \leadsto H_1,\ldots, H_n$
 will mean that $\hyper{H}_1,\ldots,\hyper{H}_n$ are the
 connected components of $\restrH{H}{X}$.

\subsection{Constructs} \label{constructs-subsection}
As Do\v sen and Petri\'c show in \cite{DP}, the non-trivial connected subsets of a finite connected hypergraph $\hyper{H}$ can be interpreted as instructions for truncating a simplex whose vertex count equals $|H|$. The resulting polytope is referred to as the hypergraph polytope (or nestohedron) linked to $\hyper{H}$; whenever $\hyper{H}$ reduces to an ordinary graph, this construction recovers the extensively investigated notion of graph associahedra. The faces of the hypergraph polytope associated to ${\bf H}$ can be nicely described as
non-planar trees, called {\em constructs}, whose nodes are decorated by non-empty subsets of $H$ and whose recursive definition is given next, using the syntax introduced in \cite{COI}. 

\smallskip

Pick a non-empty subset $\emptyset\neq Y \subsetneq H$. Suppose that $\hyper{H},Y \leadsto H_1,\ldots, H_n$. If $T_1, \dots, T_n$ are constructs of the connected components $H_1, \dots, H_n$ of $\hyper{H}\backslash Y$, respectively, then the tree obtained by grafting $T_1,\ldots,T_n$ on the root node decorated by $Y$, denoted by $Y(T_1,\ldots,T_n)$ (or by $Y\setc{T_i}{1\leq i\leq n}$), and drawn as 
\begin{center}
 \resizebox{!}{1.4cm}{\begin{tikzpicture}
\node (Y) at (0,-1) { $Y$};
 \node (T1) [circle,draw=none,minimum size=0.75cm,inner sep=0mm] at (-1,0) {\small $T_{1}$};
 \node (T2) [rectangle,draw=none,minimum size=0.75cm,inner sep=-0.5mm] at (-0.5,0) {\small $T_{2}$};
 \node (d2) [circle,draw=none,minimum size=0.75cm,inner sep=0mm] at (0.25,0) {\footnotesize $\cdots$};
 \node (Tn) [circle,draw=none,minimum size=0.75cm,inner sep=0mm] at (1,0) {\small $T_{n}$};
\draw[very thick] (Y)--(T1);
\draw[very thick] (Y)--(-0.44,-0.28);
\draw[very thick] (Y)--(Tn);
 \end{tikzpicture}}
\end{center} is a construct of $\hyper{H}$. 
 The base case is when $Y=H$ (and hence $n=0$): then the one-node tree $H()$, written simply $H$, and 
drawn as
\begin{center}
 \resizebox{!}{0.5cm}{\begin{tikzpicture}
\node (Y) [circle,minimum size=0.17cm,inner sep=0.3mm] at (0,-0.9) {${H}$};
 \end{tikzpicture}}
\end{center}
 is a construct of $\hyper{H}$.

\smallskip

We use the notation $T:\hyper{H}$ to indicate that $T$ is a construct of $\hyper{H}$. If $T=Y(T_1,\ldots,T_n)$ (including the possibility that $Y=H$), we write $Y=\mbox{root}(T)$.

\begin{convention}\label{con1}
To shorten the notation, we shall represent singleton vertices of constructs without the braces. For example, the constructs $\{x\}(\{u,v\},\{y\})$ and $\{x\}(\{u\}(\{v\}),\{y\})$ of the hypergraph $${\bf H}=\{\{x\},\{y\},\{u\},\{v\},\{x,y\},\{x,v\},\{u,v\}\}$$ will be denoted by $x(\{u,v\},y)$ and $x(u(v),y)$ and drawn as 
\begin{center}
\raisebox{-1.2em}{\resizebox{1.3cm}{!}{\begin{tikzpicture}
\node (b) [circle,minimum size=0.1cm,inner sep=0.2mm] at (4,-0.2) {\scriptsize $x$};
\node (c) [rectangle,minimum size=0.1cm,inner sep=0.1mm] at (3.7,0.2) {\scriptsize $\{u,v\}$};
 \node (d) [circle,minimum size=0.1cm,inner sep=0.2mm] at (4.3,0.2) {\scriptsize $y$};
 \draw[thick] (c)--(b)--(d);\end{tikzpicture}}} \quad and \quad\raisebox{-1.2em}{\resizebox{0.8cm}{!}{\begin{tikzpicture}
\node (b) [circle,minimum size=0.1cm,inner sep=0.2mm] at (4,-0.2) {\scriptsize $x$};
\node (c) [circle,minimum size=0.1cm,inner sep=0.2mm] at (4.2,0.2) {\scriptsize $y$};
 \node (d) [circle,minimum size=0.1cm,inner sep=0.2mm] at (3.8,0.2) {\scriptsize $u$};
 \node (a) [circle,minimum size=0.1cm,inner sep=0.2mm] at (3.8,0.6) {\scriptsize $v$};
 \draw[thick] (c)--(b)--(d)--(a);\end{tikzpicture}}}
\end{center}
respectively. Also, we shall identify 
 vertices of constructs with the sets decorating them.
\end{convention}
\begin{remark} \label{constructs-tubings}
Constructs provide an alternative description of the {\em tubings} or {\em nested sets}, which are traditionally used in the study of graph associahedra \cite{CD-CCGA} and nestohedra \cite{P09}, respectively. Indeed, let us denote, for every node $Y$ of a construct $T:{\bf H}$, by $\valup_T(Y)$ the union of the labels of the descendants of $Y$ in $T$ (all the way to the leaves), including $Y$. 
 By definition of constructs, $\valup_T(Y)$ is a tube of $\hyper{H}$. We then associate with $T$ the following nested set:
$$\psi(T)= \setc{\valup_T(Y)}{Y\;\mbox{is a (label of a) node of}\;T}.$$ Therefore, there are as many tubes in the nested set associated to a construct $T$ as there are nodes in $T$.
\begin{example}
The tubings of the graph 
\begin{center}
{\resizebox{2.8cm}{!}{\begin{tikzpicture}
\node (u) {$u$};
\node[right=10pt of u] (v) {$v$};
\node[right=10pt of v] (x) {$x$};
\node[right=10pt of x] (y) {$y$};
\draw (u)--(v)--(x)--(y);
\end{tikzpicture}}}
\end{center}
associated to the constructs $x(\{u,v\},y)$ and $x(u(v),y)$ from Convention \ref{con1} are
\begin{center}
{\resizebox{3cm}{!}{\begin{tikzpicture}
\node (u) {$u$};
\node[right=10pt of u] (v) {$v$};
\node[right=10pt of v] (x) {$x$};
\node[right=10pt of x] (y) {$y$};
\draw (u)--(v)--(x)--(y);
 \begin{scope}[on background layer]
\node[draw,fill=cyan,very thick,inner sep=3pt,rounded corners=5pt,rotate fit=0,fit=(u)(v), opacity=0.5] {};
\node[draw,fill=cyan,very thick,inner sep=2pt,rounded corners=5pt,rotate fit=0,fit=(y), opacity=0.5] {};
\end{scope}
\end{tikzpicture}}} \quad \raisebox{0.2cm}{and} \quad {\resizebox{3cm}{!}{\begin{tikzpicture}
\node (u) {$u$};
\node[right=10pt of u] (v) {$v$};
\node[right=10pt of v] (x) {$x$};
\node[right=10pt of x] (y) {$y$};
\draw (u)--(v)--(x)--(y);
 \begin{scope}[on background layer]
\node[draw, fill=cyan,very thick,inner sep=4pt,rounded corners=5pt,rotate fit=0,fit=(u)(v), opacity=0.5] {};
\node[draw, fill=cyan,very thick,inner sep=2pt,rounded corners=5pt,rotate fit=0,fit=(y), opacity=0.5] {};
\node[draw,fill=cyan,very thick,inner sep=2pt,rounded corners=5pt,rotate fit=0,fit=(v), opacity=0.5] {};
\end{scope}
\end{tikzpicture}}} \raisebox{0.2cm}{,}
\end{center}
respectively.
\end{example}
\end{remark}

From the geometric point of view, the dimension of the face represented by a construct $T$ is given by the sum $\sum_X (|X|-1)$ ranging over all the nodes $X$ of $T$. Consequently, the vertices of the polytope realising $\hyper{H}$ correspond precisely to the constructs in which every node is a singleton. These specific trees are referred to as {\em constructions}.

\smallskip

\begin{definition} \label{rqueSubfaces}

The set of constructs inherits a natural subface poset structure, which we denote by $\subseteq$, via edge contractions. More explicitly, if a construct $S$ contains an edge connecting a parent node $Y$ to a child node $X$, this edge can be contracted to merge these two nodes into a single node labelled $X \cup Y$. The resulting tree $T$ remains a well-defined construct, and we denote this covering relation by $S \subsetdot T$. The subface relation $\subseteq$ is then defined as the reflexive transitive closure of $\subsetdot$.\end{definition}

The relation $\subsetdot$ models exactly the immediate subface relation within the geometric realisation of the polytope. Following this geometric interpretation, if $S \subseteq T$ we say that the face $S$ is contained in the face $T$. We exemplify the edge-contraction relation in Figure \ref{immediate-subface}.

\begin{figure}[H]
\begin{center}
\resizebox{!}{2cm}{\begin{tikzpicture}[scale=0.6]
\node (Y) [circle,minimum size=0.17cm,inner sep=0.5mm] at (0,-0.9) {$Y$};
 \node (X) [circle,minimum size=0.17cm,inner sep=0.5mm] at (-1,0.35) {$X$};
 \node (T11) [circle,draw=none,minimum size=0.75cm,inner sep=-0.1mm] at (-1.75,1.75) {\small $T_{11}$};
 \node (d1) [circle,draw=none,minimum size=0.75cm,inner sep=0mm] at (-1.1,1.75) {\footnotesize $\cdots$};
 \node (T1m) [circle,draw=none,minimum size=0.75cm,inner sep=0mm] at (-0.35,1.75) {\small $T_{1m}$};
 \node (T2) [circle,draw=none,minimum size=0.75cm,inner sep=0mm] at (0.55,1.75) {\small $T_{2}$};
 \node (d2) [circle,draw=none,minimum size=0.75cm,inner sep=0mm] at (1.25,1.75) {\footnotesize $\cdots$};
 \node (Tn) [circle,draw=none,minimum size=0.75cm,inner sep=0mm] at (2.05,1.75) {\small $T_{n}$};
\draw[thick] (Y)--(X);
\draw[thick] (-1.75,1.4)--(X);
\draw[thick] (-0.35,1.4)--(X);
\draw[thick] (Y)--(0.5,1.4);
\draw[thick] (Y)--(2,1.4);
 \end{tikzpicture}
 }\,\raisebox{0.75cm}{$\subsetdot$}\,\resizebox{!}{2cm}{\begin{tikzpicture}[scale=0.7]
\node (Y) [rectangle,minimum size=0.17cm,inner sep=0.6mm] at (0,-0.9) {$X\cup Y$};
 \node (T11) [circle,draw=none,minimum size=0.75cm,inner sep=-0.1mm] at (-1.75,1.75) {\small $T_{11}$};
 \node (d1) [circle,draw=none,minimum size=0.75cm,inner sep=0mm] at (-1.1,1.75) {\footnotesize $\cdots$};
 \node (T1m) [circle,draw=none,minimum size=0.75cm,inner sep=0mm] at (-0.35,1.75) {\small $T_{1m}$};
 \node (T2) [circle,draw=none,minimum size=0.75cm,inner sep=0mm] at (0.55,1.75) {\small $T_{2}$};
 \node (d2) [circle,draw=none,minimum size=0.75cm,inner sep=0mm] at (1.25,1.75) {\footnotesize $\cdots$};
 \node (Tn) [circle,draw=none,minimum size=0.75cm,inner sep=0mm] at (2.05,1.75) {\small $T_{n}$};
\draw[thick] (-1.75,1.4)--(Y);
\draw[thick] (-0.35,1.4)--(Y);
\draw[thick] (Y)--(0.5,1.4);
\draw[thick] (Y)--(2,1.4);
 \end{tikzpicture}
 }
\end{center}
\caption{Covering relation in the subface poset of constructs, which is isomorphic to the poset of faces of the geometric realisation.}
\label{immediate-subface}
\end{figure}

\subsection{Fundamental examples of hypergraph polytopes} \label{fundamental-examples-section}
 We conclude this introductory section by reviewing several important families of hypergraph polytopes. The correspondences between the classical descriptions of their face posets and the characterisations in terms of corresponding posets of constructs can be found in \cite[Section 2.4]{COI}.

\smallskip
The most classical example is obtained from the complete graph on a finite set $X=\set{x_1,\ldots,x_n}$: 
$$\hyper{P}^X= \set{\set{x_1},\ldots,\set{x_n}} \cup \setc{\set{x_i,x_j}}{1\leq i\neq j\leq n}.$$
The polytope associated to $\hyper{P}^X$ is the $(n-1)$-dimensional {\em permutohedron}. The constructs of $\hyper{P}^X$ are in bijective correspondence with the surjections $X \twoheadrightarrow \{1,\ldots,k\}$ for 
$1 \leq k \leq |X|$ (or, equivalently, with the ordered set partitions of $X$, also known as set compositions of $X$).

\smallskip
Another fundamental family of hypergraph polytopes arises from the linear graph on a totally ordered set $X=\{x_1<\cdots <x_n\}$: 
$${\bf K}^{X}=\{\{x_1\},\dots,\{x_n\},\{x_1,x_2\},\dots,\{x_{n-1},x_n\}\}.$$
The polytope associated to $\hyper{K}^X$ is the $(n-1)$-dimensional {\em associahedron}. The constructs of ${\bf K}^{X}$ are in bijective correspondence with the planar trees with $n+1$ leaves.

\smallskip 
Associahedra and permutohedra as particular instances of
 {\em teleassociahedra}, introduced in \cite[Example 4.3]{PLBJ2}, which are polytopes for graphs $$\hyper{T}^d_V:=\{\{i\}\,|\, i \in V\} \cup \{\{i,j\}\,|\, i,j \in V, i < j \mbox{ and } |i-j| \leq d\},$$ where $V\inc\mathbb{Z}$ is a non-empty finite set of integers and $d$ is a positive integer, whenever such a graph is connected. 
 
 \medskip
 The preceding examples are all graph associahedra. We now turn to several families of genuine hypergraph polytopes.

\smallskip

The simplest one is obtained from the hypergraph
$$\hyper{S}^X=\setc{\set{x}}{x\in X}\union\set{\set{X}},$$
which contains no non-trivial proper hyperedges. The polytope associated to $\hyper{S}^X$ is the $(n-1)$-dimensional {\em simplex}. 
The constructs of $\hyper{S}^X$ are in bijective correspondence with the non-empty subsets of $X$. 

\smallskip
Another family of polytopes encoded by genuine hypergraphs is determined by hypergraphs of the form 
 $$\hyper{C}^X=\{\{x_1\},\dots,\{x_n\}\}\cup\setc{\setc{x_j}{1\leq j\leq i}}{1\leq i\leq n},$$ for a finite ordered set $X=\set{x_1<\cdots<x_n}$. The polytope associated to $\hyper{C}^X$ is the $(n-1)$-dimensional {\em hypercube}. The constructs of $\hyper{C}^X$ are in bijective correspondence with the words of
length $n$ over the alphabet $\{+,-,\bullet\}$.

\smallskip
Finally, we introduce a family that appears to be new. Let $X=\set{x_1<\cdots<x_n}$ be a finite subset of $\mathbb{Z}$. The polytope for the hypergraph
\begin{align*}
\hyper{Q}^X=\{\{x_1\},\dots,\{x_n\}\} & \cup\setc{\{x_i,x_j\}}{|x_j-x_i|\leq 1} \\ &\cup\setc{\{x_i,x_j,x_k\}}{i<j<k, |x_k-x_i|\leq 3}.
\end{align*} 
is called the $(n-1)$-dimensional {\em quasi-associahedron}, when this hypergraph is connected.
 We illustrate hypergraphs of quasi-associahedra and give one corresponding geometric realisation in Figure \ref{FigQA}.
\begin{figure}[H]
\centering
\raisebox{0.75cm}{\begin{tikzpicture}
 \draw[fill=red!20,fill opacity=0.4] (-0.3,0) [out=-90,in=180] to (0,-0.3) [out=0,in=-180] to (1,-0.3) [out=0,in=-90]to (1.3,0) [out=90,in=-90] to (1.3,1) [out=90,in=0] to (1,1.3) [out=180,in=90] to (0.7,0.6) [out=-90,in=0] to (0,0.3) [out=180,in=90] to (-0.3,0);
 \draw[xscale=-1,xshift=-1cm,fill=blue!20,fill opacity=0.4] (-0.3,0) [out=-90,in=180] to (0,-0.3) [out=0,in=-180] to (1,-0.3) [out=0,in=-90]to (1.3,0) [out=90,in=-90] to (1.3,1) [out=90,in=0] to (1,1.3) [out=180,in=90] to (0.7,0.6) [out=-90,in=0] to (0,0.3) [out=180,in=90] to (-0.3,0);
\node (x) [circle,draw=none,inner sep=0.3mm,minimum size=1.3mm] at (0,1) {\small $2$};
\node (u) [circle,draw=none,inner sep=0.3mm,minimum size=1.3mm] at (1,1) {\small $3$};
\node (y) [circle,draw=none,inner sep=0.3mm,minimum size=1.3mm] at (0,0) {\small $1$};
\node (z) [circle,draw=none, inner sep=0.3mm,minimum size=1.3mm] at (1,0) {\small $4$};
\draw (y)--(x)--(u)--(z);
\end{tikzpicture}}
\quad\enspace
\raisebox{0.6cm}{\begin{tikzpicture}
 \draw[rotate=-72,xshift=-1.05cm,yshift=-0.55cm,fill=blue!20,fill opacity=0.4] (0,-0.2) [out=180,in=-90] to (-0.2,0) [out=90,in=-90,looseness=0.9] to (0.3,1) to[out=90,in=180,looseness=1] (0.5,1.76) [out=0,in=90] to (0.8,1)[out=-90,in=90]to (1.2,0)to[out=-90,in=0,looseness=1](1,-0.2) to[out=180,in=0] (0,-0.2);
 \draw[xscale=-1,rotate=-143,xshift=-0.86cm,yshift=-1.15cm,fill=red!20,fill opacity=0.4] (0,-0.2) [out=180,in=-90] to (-0.2,0) [out=90,in=-90] to (0.3,1) to[out=90,in=180] (0.5,1.76) [out=0,in=90] to (0.8,1)[out=-90,in=90]to (1.2,0)to[out=-90,in=0,looseness=1](1,-0.2) to[out=180,in=0] (0,-0.2);
 \draw[xscale=-1,rotate=-71,xshift=-1.035cm,yshift=-0.5cm,fill=green!20,fill opacity=0.4] (0,-0.2) [out=180,in=-90] to (-0.2,0) [out=90,in=-90] to (0.29,0.95) to[out=90,in=180,looseness=0.9] (0.5,1.75) [out=0,in=90,looseness=1] to (0.8,1)[out=-90,in=90]to (1.2,0)to[out=-90,in=0,looseness=1](1,-0.2) to[out=180,in=0] (0,-0.2);
\draw[rotate=-143,xshift=-0.83cm,yshift=-1.17cm,fill=teal!20,fill opacity=0.4] (0,-0.2) [out=180,in=-90] to (-0.2,0) [out=90,in=-90] to (0.3,1) to[out=90,in=180] (0.5,1.75) [out=0,in=90] to (0.8,1)[out=-90,in=90]to (1.2,0)to[out=-90,in=0,looseness=1](1,-0.2) to[out=180,in=0] (0,-0.2);
\node (x) [circle,draw=none,inner sep=0.3mm,minimum size=1.3mm] at (-0.5,-0.1) {\small $1$};
\node (y) [circle,draw=none,inner sep=0.3mm,minimum size=1.3mm] at (-0.8,0.8) {\small $2$};
\node (z) [circle,draw=none,inner sep=0.3mm,minimum size=1.3mm] at (-0.05,1.4) {\small $3$};
\node (u) [circle,draw=none, inner sep=0.3mm,minimum size=1.3mm] at (0.7,0.8) {\small $4$};
\node (v) [circle,draw=none, inner sep=0.3mm,minimum size=1.3mm] at (0.4,-0.1) {\small $5$};
\draw (x)--(y)--(z)--(u)--(v);
\end{tikzpicture}} \quad {\resizebox{4cm}{!}{\begin{tikzpicture}[thick,scale=5]
\coordinate (A1) at (0,2);
\coordinate (A11) at (-0.39,1.5);
\coordinate (A12) at (0.39,1.5);
\coordinate (A121) at (0.25,1.498);
\coordinate (A122) at (0.33,1.46);
\coordinate (A13) at (0,1.25);
\coordinate (A131) at (-0.153,1.35); 
\coordinate (A132) at (0.153,1.35); 
\coordinate (A2) at (0,0); 
\coordinate (A21) at (-0.387,0.213); 
\coordinate (A22) at (0.387,0.213); 
\coordinate (A23) at (0,0.5); 
\coordinate (A231) at (-0.153,0.388); 
\coordinate (A232) at (0.153,0.388); 
\coordinate (A3) at (-1.1,0.6);
\coordinate (A31) at (-0.9,0.49);
\coordinate (A32) at (-0.8,0.976);
\coordinate (A33) at (-0.6,0.6);
\coordinate (A331) at (-0.65,0.66);
\coordinate (A332) at (-0.7,0.56);
\coordinate (A4) at (1.1,0.6);
\coordinate (A42) at (0.9,0.49);
\coordinate (A41) at (0.8,0.976);
\coordinate (A411) at (0.825,0.845);
\coordinate (A412) at (0.745,0.875);
\coordinate (A43) at (0.6,0.6);
\coordinate (A431) at (0.65,0.66);
\coordinate (A432) at (0.7,0.56);
\draw[draw=red] (A32)--(A3)--(A31);
\draw[draw=red] (A11)--(A1)--(A4);
\draw[draw=red] (A21)--(A2)--(A22);
\draw[draw=red] (A2) -- (A1);
\draw[draw=red] (A13)--(A1);\draw[draw=red] (A41)--(A4)--(A42);\draw[draw=red,dashed] (A3)--(A4);\draw[dashed] (A412)--(A431)--(A432)--(A42);
\draw[dashed](A431)--(A432)--(A332)--(A331)--cycle;
\draw (A21)--(A231)--(A232)--(A22);
\draw (A231) -- (A131) -- (A132) -- (A232) -- cycle;
\draw[dashed] (A32) -- (A331)--(A332) -- (A31); 
\draw (A11) -- (A131) --(A132) -- (A122);
\node (x) at (-0.4,0.9) {\Huge {\bf 1}};
\node (y) at (0.4,0.9) {\Huge {\bf 2}}; 
\node (z) at (-0.425,1.75) {\Huge {\bf 3}};
\node (u) at (-0.92,0.2) {\Huge {\bf 4}}; 
\draw (A121)--(A122)--(A411);\draw[dashed](A121)--(A412)--(A411);
\draw (-0.85,0.2) edge[->, bend right, >=stealth, line width=4pt, shorten >=-1mm] (-0.5,0.4);
\draw[yscale=-1,yshift=-1.75cm,xshift=0.17cm] (-0.6,0.1) edge[->, bend left, >=stealth, line width=4pt, shorten >=-1mm] (-0.5,0.5);
\fill [fill=white,opacity=0.75] (A231)--(A21)--(A31)--(-0.5,0.55)--cycle;
\fill [fill=white,opacity=0.75] (A11)--(A131)--(-0.6,0.9)--(A32)--cycle;
 \draw[draw=black,fill=none] (A121)-- (A11) -- (A32)--(A31) --(A21)-- (A22) -- (A42)--(A411);
\end{tikzpicture}}}
 
\caption{Quasi-associahedra on $\llbracket 1,4 \rrbracket$ (on the left) and $\llbracket 1,5 \rrbracket$ (in the middle) and the associated polytope in dimension $3$ (on the right) \label{FigQA}}
 
\end{figure}
 The number of constructs of these polytopes for $X=\llbracket 1,k \rrbracket$ is counted in Table \ref{tableQA}. To the best of our knowledge, the resulting sequences did not appear in the OEIS and has been added by the authors as \cite[A399243]{oeis}. 
 
\begin{table}
\centering
\begin{tabular}{|c|c|c|c|c|c|}
\hline
k & 1 & 2 & 3 & 4 & 5 \\ \hline
dim 0 & 1 & 2 & 5 & 18& 72 \\ \hline
dim 1 & 0 & 1 & 5 & 27& 144 \\ \hline
dim 2 & 0& 0 & 1 & 11& 93 \\ \hline
dim 3 & 0 & 0 & 0 & 1 & 21 \\ \hline
dim 4 & 0 & 0 & 0 & 0 & 1 \\ \hline
Total & 1 & 3 & 11 & 57 & 336 \\ \hline
\end{tabular}
\caption{Number of constructs of a fixed dimension for the quasi-associahedra on $X=\llbracket 1,k \rrbracket$. \label{tableQA}}
\end{table}

\subsection{Restriction} \label{restriction-subsection}
We now recall from \cite[Section 4.4]{PLBJ1} the definition of the restriction of a construct.

 \begin{definition}[Restriction of a construct] \label{restriction-definition}
 For a hypergraph ${\bf H}$,
 a hypergraph ${\bf K}$ such that $K\subseteq H$ and any hyperedge of ${\bf K}$ is connected in ${\bf H}$, 
 and a construct $C:{\bf H}$, {\em the restriction of $C$ to $K$} is the construct $C_{\lceil_{\bf K}}:{\bf K}$ defined as follows:
\begin{itemize}
\item if $C=H$, then $C_{\lceil {\bf K}}=K$, 
\item if $C=X(C_1,\dots,C_n)$, where $\emptyset \neq X \subsetneq H$, ${\bf H},X\leadsto {H}_1,\dots { H}_n$ and $C_{i}:{\bf H}_{i}$ for $1\leq i\leq n$, and if
\begin{itemize}
\item $X\inter K=\emptyset$, then there exists $i\in\{1,\dots,n\}$ such that $K\inc H_i$, and we set $C_{\lceil_{\bf K}}={(C_{i})}_{\lceil_{\bf K}}$;
\item otherwise, supposing that ${\bf K},(X\inter K)\leadsto K_1,\dots,K_p$, we have that for each $j\in\{1,\dots,p\}$ there exists $i\in\{1,\dots,n\}$, such that $K_j$ is connected in $H_i$; we denote by $\psi:\{1,\dots,p\}\rightarrow \{1,\dots,n\}$ the induced index correspondence, and we set $$C_{\lceil_{\bf K}}=(X\inter K)((C_{\psi(1)})_{\lceil_{{\bf K}_1}},\dots,(C_{\psi(p)})_{\lceil_{{\bf K}_p}}).$$
\end{itemize}
\end{itemize}
\end{definition}

\begin{remark}\label{rest1}
For ${\bf L}$, ${\bf K}$ and ${\bf H}$ such that $\emptyset \neq L\subseteq K\subseteq H$, every hyperedge of ${\bf L}$ is connected in $\hyper{K}$ and every hyperedge of ${\bf K}$ is connected in $\hyper{H}$, and a construct $C:{\bf H}$, it holds that $(C_{\lceil_{\bf K}})_{\lceil_{\bf L}}=C_{\lceil_{\bf L}}$.
\end{remark}
\begin{remark} 
The notion of restriction generalises that of a subconstruct of a construct. Subconstructs of $C:\hyper{H}$ are defined as follows: $C$ is a subconstruct of itself, and if $C=X(C_1,\ldots,C_n)$, with $\hyper{H},X\leadsto H_1,\ldots, H_n$, then a proper subconstruct of $C$ is a subconstruct of one of the $C_i$. In particular, $C_1,\ldots,C_n$ are called the immediate subconstructs of $C$, and one checks easily that $\restrconstr{C}{{{\bf H}_i}}=C_i$. More generally, for every subconstruct $C'$ of $C$ with $\textrm{root}(C')=X$, we have $C'=\restrconstr{C}{{\valup_C(X)}}$ (cf. Remark \ref{constructs-tubings}).
\end{remark}

\section{Generalised flip order on the faces of nestohedra} \label{GFO-section}
In the rest of the paper, we assume that a total order is given on the set of vertices of the hypergraph in consideration; we refer to such a hypergraph as {\em ordered}. The intuition behind the generalised flip order is that it organises the constructs of an ordered hypergraph ${\bf H}$ according to ``how much they respect the ambient ordering on the vertices of ${\bf H}$''.
\subsection{Definition of the generalised flip order}\label{gfo_def}

We first recall the relation on faces of hypergraph polytopes introduced by the first author and Laplante-Anfossi, which is based on the immediate subface relation (cf. Section \ref{constructs-subsection}). We need a more precise notation for the latter: if $S \subsetdot T$ results from the contraction of an edge between $X$ and $Y$ in $S$ ($X$ parent, $Y$ child), then we write $S \subsetdot_{X,Y} T$.
\begin{definition}\label{cov}
 Let us consider an ordered hypergraph $\hyper{H}$ and two constructs $S,S':\hyper{H}$. We define that $S$ is covered by $S'$, and write $S \lessdot S'$, if and only if there exist two sets $U$ and $V$ such that $U>V$ and \begin{itemize}
 \item either $S\, \subsetdot_{U,V} \,S'$ (fusion),
 \item or $S' \,\subsetdot_{V,U} \,S$ (split).
 \end{itemize}
 In the first case, we call the covering relation a \emph{fusion}, and a \emph{root fusion} if $U=\text{root}(S)$. 
 In the second case, we call the covering relation a \emph{split}, and a \emph{root split} if $U\cup V=\text{root}(S)$. In both cases, we refer to the condition $U>V$ as the {\em order condition} of the covering. 
\end{definition}

In the following example, we illustrate fusion and split.

\begin{example}\label{runex} Here are some instances of fusion and split for our running example hypergraph ${\bf H}^\maltese$ from Example \ref{hypergraphex1}:
\begin{center}
 \begin{tikzpicture}
 \node (a) [inner sep=2pt] at (0,0) {\small $2$};
 \node (b) [inner sep=2pt] at (-0.35,0.6) {\small $1$};
\node (c) [inner sep=2pt] at (0.35,0.6) {\small $4$};
\node (d) [inner sep=2pt] at (0.35,1.2) {\small $3$};
 \draw[thick] (d)--(c)--(a)--(b); 
\end{tikzpicture} \raisebox{0.7cm}{$\lessdot$} \begin{tikzpicture} 
 \node (a) [inner sep=2pt] at (0,0) {\small $\{1,2\}$};
 \node (b) [inner sep=2pt] at (0,0.6) {\small $4$};
 \node (c) [inner sep=2pt] at (0,1.2) {\small $3$};
 \draw[thick] (a)--(b)--(c); 
 \end{tikzpicture} \quad\quad\quad \begin{tikzpicture} 
 \node (a) [inner sep=2pt] at (0,0.6) {\small $1$};
 \node (b) [inner sep=2pt] at (0,0) {\small $\{2,3,4\}$};
 \draw[thick] (a)--(b); 
 \end{tikzpicture} \raisebox{0.3cm}{$\lessdot$} \begin{tikzpicture}
 \node (a) [inner sep=2pt] at (0,0) {\small $2$};
 \node (b) [inner sep=2pt] at (-0.35,0.6) {\small $1$};
\node (c) [inner sep=2pt] at (0.35,0.6) {\small $\{3,4\}$};
 \draw[thick] (c)--(a)--(b); 
\end{tikzpicture} 
\end{center}
\begin{center}
 \begin{tikzpicture}
 \node (a) [inner sep=2pt] at (0,0) {\small $4$};
 \node (b) [inner sep=2pt] at (0,0.6) {\small $\{1,2,3\}$};
 \draw[thick] (a)--(b); 
 \end{tikzpicture} \raisebox{0.3cm}{$\lessdot$} \raisebox{0.2cm}{\begin{tikzpicture}
 \node (a) [inner sep=2pt] at (0,0) {\small $\{1,2,3,4\}$};
 \end{tikzpicture}} \raisebox{0.3cm}{$\lessdot$} \begin{tikzpicture} 
 \node (a) [inner sep=2pt] at (0,0.6) {\small $4$};
 \node (b) [inner sep=2pt] at (0,0) {\small $\{1,2,3\}$};
 \draw[thick] (a)--(b); 
 \end{tikzpicture}
\end{center}
The last example above illustrates how fusion followed by the appropriate split (namely, the split involving the same two nodes that were previously fused together) results in the {\em flip} of two nodes of a construct. Note that this flip is not always possible. For instance, with the same example as above, we have 
\begin{center}
\begin{tikzpicture} 
 \node (a) [inner sep=2pt] at (0,0.6) {\small $\{1,2\}$};
 \node (b) [inner sep=2pt] at (0,0) {\small $\{3,4\}$};
 \draw[thick] (a)--(b); 
 \end{tikzpicture}\raisebox{0.3cm}{$\lessdot$} \begin{tikzpicture} 
 \node (b) [inner sep=2pt] at (0,0) {\small $\{1,2,3,4\}$};
 \end{tikzpicture}\raisebox{0.3cm}{$\lessdot$} \begin{tikzpicture} 
 \node (a) [inner sep=2pt] at (-0.6,0.6) {\small $1$};
 \node (c) [inner sep=2pt] at (0.6,0.6) {\small $2$};
 \node (b) [inner sep=2pt] at (0,0) {\small $\{3,4\}$};
 \draw[thick] (a)--(b)--(c); 
 \end{tikzpicture}
\end{center}
hence the nodes $\{1,2\}$ and $\{3,4\}$ cannot be flipped.

Here are some non-examples as well:
\begin{center} 
\begin{tikzpicture}
 \node (a) at (0,0) {\small $\{1,3\}$};
 \node (b) at (-0.35,0.6) {\small $2$};
 \node (c) at (0.35,0.6) {\small $4$};
 \draw[thick] (b)--(a)--(c); 
 \end{tikzpicture} \raisebox{0.3cm}{$\centernot\lessdot$} \begin{tikzpicture} 
 \node (a) [inner sep=2pt] at (0,0.6) {\small $4$};
 \node (b) [inner sep=2pt] at (0,0) {\small $\{1,2,3\}$};
 \draw[thick] (a)--(b); 
 \end{tikzpicture} \quad\quad\quad
\begin{tikzpicture} 
 \node (a) [inner sep=2pt] at (0,0.6) {\small $1$};
 \node (b) [inner sep=2pt] at (0,0) {\small $\{2,3,4\}$};
 \draw[thick] (a)--(b); 
 \end{tikzpicture} \raisebox{0.3cm}{$\centernot\lessdot$} \begin{tikzpicture}
 \node (a) at (0,0) {\small $\{2,4\}$};
 \node (b) at (-0.35,0.6) {\small $1$};
 \node (c) at (0.35,0.6) {\small $3$};
 \draw[thick] (b)--(a)--(c); 
 \end{tikzpicture}
\end{center}
In both cases, the order condition fails. The complete Hasse diagram of the GFO for ${\bf H}^\maltese$ can be found in Figure \ref{hassediag3} (in the Appendix).
\end{example}
At first glance, Definition \ref{cov} appears perfectly symmetric: the covering relation is described either by a fusion or by a split and these two actions are dual to each other. However, once we consider fusion and split as (well-defined) operations to be performed on constructs (in order to produce other constructs), the asymmetry is revealed: there is no side condition associated with a fusion, while there is some precondition to be satisfied for performing a split. Indeed, any edge of a construct can be contracted (provided that the two adjacent vertices are appropriately comparable) and the contraction automatically yields another valid construct of the same hypergraph. In contrast, not every node can be arbitrarily split. If we want to split a node $Z=U\cup V$, then, in addition to requiring that $U<V$, we must also make sure that $V$ lies in a single connected component in the appropriate restricted hypergraph. This connectivity condition is necessary to ensure that the resulting tree is again a construct of the original hypergraph. We formalise it in the following definition. 
\begin{definition} \label{splittable-definition}
Let $S:\hyper{H}$ and let $Z$ be a node of $S$. We say that $Z$ is {\em $(U,V)$-splittable in} $S$ if
\begin{itemize}
\item either $Z=\text{root}(S)$, $U<V$ and $V$ lies entirely in one of the connected components of $\hyper{H}{\setminus U}$,
\item or $S=X(S_1,\ldots,S_n)$, $Z$ is a node of some $S_i$ and $Z$ is $(U,V)$-splittable in $S_i$.
\end{itemize}
\end{definition}
An obvious consequence of this definition is that for every construct $S$ and any node $Z$ that is 
$(U,V)$-splittable for some 2-partition of $Z$, then $U<V$.
We also note that for every construct $S$ and every node $Z$ of $S$ such that $|Z|\geq 2$ there exists at least one pair $(U,V)$ such that $Z$ is $(U,V)$-splittable in $S$. Indeed, the condition in Definition
\ref{splittable-definition} is obviously met if $V$ is a singleton, and such a choice is (uniquely) possible, taking $V=\set{\max(Z)}$ (forced by the requirement $(Z\setminus V)<V$).

\smallskip

 In the next definition, we formalise the 
actions of splitting
a vertex and collapsing an edge in a construct. 
\begin{definition}\label{basic_split}
Let ${\bf H}$ be a hypergraph, let $X\subset H$, ${\bf H}\backslash X\leadsto H_{1},\dots, H_{n}$ and let $S_i:{\bf H}_{i}$. 
Consider the construct $S=X(S_1,\dots,S_n):{\bf H}$.

\begin{itemize} 
\item[a)] Let
$Z$ be an $(U,V)$-splittable
 node of $S$. The non-planar rooted tree $S[U(V)/Z]$, obtained from $S$ by the $(U,V)$-splitting of the vertex $V$, is defined recursively as follows\footnote{Here we borrow a convenient notation often used in the theory of rewriting systems, in which $s[t/x]$ denotes substitution of $t$ for $x$ in $s$. A mnemonic ``trick" consists in reading $t/x$ literally as $x$ crushed under $t$.}. If there exists an index $i$, $1\leq i\leq n$, such
that $Z$ is a node of $C_i$, we define
$$S[U(V)/Z]:=X(S_1,\dots,S_{i-1},S_i[U(V)/Z],S_{i+1},\dots,S_n).$$ 
Assume that $Z=X$ and let
$\{i_1,\dots,i_p\}\cup \{j_1,\dots,j_q\}$ be the partition of the set
$\{1,\dots,n\}$ such that the hypergraphs ${\bf H}_{i_s}$, for
$1\leq s\leq p$ are those contained in the connected component of $\hyper{H}\backslash U$ that contains $V$.

We define
\begin{equation}S[U(V)/Z]:=U(V(S_{i_1},\dots
S_{i_p}),S_{j_1},\dots,S_{j_q}).\label{splitdef}\end{equation} If, exceptionally, 
$\{i_1,\dots,i_p\}=\emptyset$ (resp. $\{j_1,\dots,j_r\}=\emptyset$, $\{i_1,\dots,i_p\}=\{j_1,\dots,j_r\}=\emptyset$), $S[U(V)/Z]$ takes the form
 $U(V,S_{j_1},\dots,S_{j_q})$ (resp.\linebreak $U(V(S_{i_1},\dots,S_{i_p}))$, $U(V)$). 
\item[b)] If $Y$ and $Z$ are two adjacent nodes of $S$, with $Y$ being the parent of $Z$, we shall denote by $S[(Y\cup Z)/Y(Z)]$ the non-planar rooted tree obtained from $S$ by collapsing the edge between $Y$ and $Z$ and labelling with $Y\cup Z$ the node obtained by merging $Y$ and $Z$.
\end{itemize}
\end{definition}

\begin{lemma}\label{splitformally} If $S:{\bf H}$ and if $Z$ is an $(U,V)$-splittable
 node of $S$, then $S[U(V)/Z]$ is a construct of ${\bf H}$. Likewise, if $Y$ and $Z$ are two adjacent nodes of $S$, with $Y$ being the parent of $Z$, then $S[(Y\cup Z)/Y(Z)]$ is a construct of ${\bf H}$. 
\end{lemma}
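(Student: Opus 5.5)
Both claims can be proved by structural induction on the construct $S=X(S_1,\dots,S_n):\hyper{H}$, with $\hyper{H},X\leadsto H_1,\dots,H_n$. The inductive cases are immediate. If the split node $Z$, or the contracted edge $Y(Z)$, lies inside some $S_i$, then the induction hypothesis applied to $S_i:\hyper{H}_i$ gives a construct of $\hyper{H}_i$ in its place. Since the root $X$ and the decomposition $\hyper{H},X\leadsto H_1,\dots,H_n$ are unchanged, the recursive definition of constructs reassembles a construct of $\hyper{H}$. (Here Definition~\ref{splittable-definition} says exactly that $Z$ is $(U,V)$-splittable in $S_i$, and $\hyper{H}_i$ is connected, so the induction applies.) It therefore suffices to treat the root cases.

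\emph{Root split.} Here $Z=X=U\cup V$ with $U<V$, and $V$ lies in a single connected component $C$ of $\hyper{H}\backslash U$. First, the other components of $\hyper{H}\backslash U$ are unions of some of the $H_j$. Indeed, each $H_j$ is connected and disjoint from $U$, hence contained in one component of $\hyper{H}\backslash U$. Since $H\backslash U=V\cup\bigcup_j H_j$, the components other than $C$ are exactly the $H_j$ not contained in $C$, i.e.\ the $H_{j_1},\dots,H_{j_q}$. The key claim is then that each $H_{j_t}$ is itself a component of $\hyper{H}\backslash U$. For this I will use that if $H_{j}\subseteq C'$ with $C'\neq C$, then $C'$ contains no point of $V$, so $C'$ is a union of some $H_k$'s. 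Any hyperedge in $C'$ meeting two of them would be a hyperedge of $\hyper{H}\backslash X$ joining two distinct components, which is impossible. Hence $C'=H_j$. Second, $\hyper{C}=\hyper{H}_C$ is connected, and $\hyper{C}\backslash V$ has as components exactly $H_{i_1},\dots,H_{i_p}$, because $C\backslash V=\bigcup_s H_{i_s}$ and these are already the components of $\hyper{H}\backslash X$. Thus $V(S_{i_1},\dots,S_{i_p})$ is a construct of $\hyper{C}$, with the degenerate case $V=C$ giving the one-node construct $V$. Then $U(V(S_{i_1},\dots,S_{i_p}),S_{j_1},\dots,S_{j_q})$ is a construct of $\hyper{H}$. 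The main care needed is the bookkeeping of components, namely that the components of $\hyper{H}\backslash U$ are precisely $C$ and the $H_{j_t}$; everything else is routine.

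\emph{Root contraction.} Here $Y=X$ and $Z=\mathrm{root}(S_i)$, say $S_i=Z(R_1,\dots,R_m)$ with $\hyper{H}_i,Z\leadsto K_1,\dots,K_m$. I need to show that the components of $\hyper{H}\backslash(X\cup Z)$ are the $H_j$ for $j\neq i$ together with the $K_l$. These sets partition $H\backslash(X\cup Z)$ and are each connected. A hyperedge of $\hyper{H}$ avoiding $X\cup Z$ lies inside some $H_j$, since the $H_j$ are the components of $\hyper{H}\backslash X$. If it lies in $H_i$, it avoids $Z$ and so lies inside some $K_l$. Hence the restricted hypergraph is the union of the restrictions to these pieces, and $(X\cup Z)(S_1,\dots,\widehat{S_i},\dots,S_n,R_1,\dots,R_m)$ is a construct. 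The degenerate case $Z=H_i$ ($m=0$) is included, and if $X\cup Z=H$ the result is the one-node construct $H$.
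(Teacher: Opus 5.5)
Your proof is correct and follows the same route as the paper: induction on $S$, where the only substantive case is the root split, resting on the fact that the component $C$ of $\hyper{H}\backslash U$ containing $V$ is $V\cup\bigcup_s H_{i_s}$, with $V(S_{i_1},\dots,S_{i_p})$ a construct of $\hyper{H}_C$. You also spell out two things the paper's sketch leaves implicit: that the remaining components of $\hyper{H}\backslash U$ are exactly the $H_{j_t}$, and the component bookkeeping for the root contraction case.
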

\begin{proof}
The proofs of both claims go easily by induction on the number
of vertices of $S$. In the proof of $S[U(V)/Z]:{\bf H}$, the only interesting case is the one given by 
$S=Z(S_1,\dots,S_n)$. In that
case, the argument is based on the fact that the set of vertices
$V\cup \bigcup_{i\in \{i_1,\dots,i_q\}} {H}_i$
determines a connected component ${\bf H}'$ of ${\bf H}$ and,
furthermore, that $V(S_{i_1},\dots S_{i_q}):{\bf H}'$.
\end{proof}

\begin{convention}
In the remainder of the section, we shall often transform constructs by performing successively multiple actions from Definition \ref{basic_split}. To denote the result of such a transformation, we shall use the square bracket notation from that definition in successive manner, omitting the parentheses, so that the leftmost bracket describes the first action performed on the starting construct, the next one describes the action performed on the result of the first action (i.e., the second action performed on the starting construct), and so on. 
\end{convention}

\begin{example}\label{Exple27}
It is easy to check that, for ${\bf H}^\maltese$ (cf. Example \ref{hypergraphex1} and Example \ref{runex}), it holds that, for every construct $C:{\bf H}^\maltese$ and every node $Z=U\cup V$ of $C$, if $U<V$, then $Z$ is $(U,V)$-splittable.

For an example of non-splittability coming from a connectivity issue, consider ${\bf H}^\dag=\{\{1\},\{2\},\{3\},\{4\},\{2,3\},\{3,1\},\{1,4\}\}$ and the unique single-node construct $S=\{1,2,3,4\}:{\bf H}^\dag$. Although $\{1,2\}<\{3,4\}$, the unique node $\{1,2,3,4\}$ of $S$ is not $(\{1,2\},\{3,4\})$-splittable, since $\{3,4\}$ is not contained in a single connected component of ${\bf H}^\dag\backslash\{1,2\}$. The complete Hasse diagram of the GFO for ${\bf H}^\dag$ can be found in Figure \ref{hassediag4} in the Appendix.
\end{example}

\begin{definition}[Generalised flip order]
The generalised flip order (GFO) is the binary relation $\leq$ defined on the set of all constructs of a hypergraph as the reflexive and transitive closure of the covering relation $\lessdot$ from Definition \ref{cov}.
\end{definition}

In the following two remarks, we collect some easy observations that will be used for proving that the GFO is indeed a partial order.

\begin{remark} \label {lem:StabilityByContext}
\begin{itemize}
\item[a)] We note that, by Definition \ref{cov}, if we have two constructs $S=X(S_1,\ldots,S_n)$ and $S'=X(S'_1,\ldots,S'_n)$
where $S_i\lessdot S'_i$ for some $i$ and $S_j=S'_j$ for all $j\neq i$, then $S\lessdot S'$.
We call such a covering instance a {\em non-root covering}. Therefore, a covering $S\lessdot S'$ can be of three kinds: a root fusion, a root split, or a non-root covering. 
\item[b) ]It follows readily that if we have two constructs $S=X(S_1,\ldots,S_n)$ and $S'=X(S'_1,\ldots,S'_n)$
where $S_i\leq S'_i$ for each $i$, then $S\leq S'$. 
Finally we also observe that if $X(S_1,\ldots,S_n)=S\leq S'=X(S'_1,\ldots,S'_n)$ is obtained via a sequence of non-root coverings, then the sequence is in fact an interleaving of sequences $S_i\leq S'_i$, for all $i$, as what happens in each $S_i$ is independent of what happens in any other $S_j$: this interval is a cartesian product of intervals associated with the relations $S_i\leq S'_i$. 
\end{itemize}
\end{remark}

\begin{remark}\label{l1}
If $C,C':{\bf H}$ are such that $C\lessdot C'$, then, setting $X=\textrm{root}(C)$ and
 $X'=\textrm{root}(C')$, we have $\min(X)\geq \min(X')$ and $\max(X)\geq\max(X')$. More precisely, considering the three possible kinds of the covering $C\lessdot C'$ (cf. Remark \ref{lem:StabilityByContext}(a)), if $C\lessdot C'$ is 
\begin{itemize}
\item[-] root split, then $\min(X)=\min(X')$ and $\max(X)>\max(X')$;
\item[-] root fusion, then $\min(X)>\min(X')$ and $\max(X)=\max(X')$; 
\item[-] non-root covering, then $\min(X)=\min(X')$ and $\max(X)=\max(X')$.
 \end{itemize}
\end{remark}

Relying on Remark \ref{l1}, we next prove another property of the covering relation $\lessdot$.

\begin{lemma}\label{l2}
If $C_1,\dots,C_n:{\bf H}$ are such that $C_1 \lessdot C_2 \lessdot \cdots \lessdot C_n$ and $\textrm{root}(C_1)=\textrm{root}(C_n)=X$,
 then $\operatorname{root}(C_i)=X$ for all $1\leq i\leq n$.
\end{lemma}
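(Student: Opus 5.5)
The plan is to use Remark \ref{l1} as a monotonicity statement for the pair $(\min(\textrm{root}(C)),\max(\textrm{root}(C)))$ along a chain of coverings. Write $X_i=\textrm{root}(C_i)$. By Remark \ref{l1}, for each $i$ we have $\min(X_i)\geq\min(X_{i+1})$ and $\max(X_i)\geq\max(X_{i+1})$. Chaining these inequalities gives
$$\min(X)=\min(X_1)\geq\min(X_i)\geq\min(X_n)=\min(X),$$
and similarly for $\max$. Hence $\min(X_i)=\min(X)$ and $\max(X_i)=\max(X)$ for every $i$.

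Next, every step in the chain must have equal $\min$ and $\max$ of roots on both sides. By the finer case analysis of Remark \ref{l1}, a root split strictly decreases $\max$, and a root fusion strictly decreases $\min$. Neither can therefore occur anywhere in the chain. So each covering $C_i\lessdot C_{i+1}$ is a non-root covering.

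Finally, a non-root covering leaves the root label unchanged. In the sense of Remark \ref{lem:StabilityByContext}(a), $C_i=Y(\ldots)$ and $C_{i+1}=Y(\ldots)$ with the same $Y$. I would double-check this against Definition \ref{cov}: a fusion or split that does not involve the root node changes only nodes strictly inside some immediate subconstruct. The root label is therefore preserved, and by induction on $i$ we get $X_i=X_1=X$ for all $i$.

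The only real obstacle is making sure the trichotomy in Remark \ref{l1} is exhaustive and that its strict inequalities hold. For a root split of $X=U\cup V$ with $U<V$, the new root is $U$, whose max is smaller than $\max(V)=\max(X)$. For a root fusion of root $U$ with a child $V$ where $U>V$, the new root is $U\cup V$, whose min is $\min(V)<\min(U)$. Since the paper states these facts in Remark \ref{l1}, I would simply invoke them.
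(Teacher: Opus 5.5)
Your proof is correct and is essentially the paper's argument: both rest on Remark~\ref{l1}, namely that $\min$ and $\max$ of the root label weakly decrease along coverings, and that a root split (resp.\ root fusion) strictly decreases the $\max$ (resp.\ $\min$). The paper argues by contradiction at the first covering involving the root, whereas you squeeze $\min(X_i)$ and $\max(X_i)$ between their equal endpoint values; this is the same reasoning in direct form.
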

\begin{proof}
Suppose that $\textrm{root}(C_i)=X_i$ for all $1\leq i\leq n$. The claim follows by showing that each covering in $C_1 \lessdot C_2 \lessdot \cdots \lessdot C_n$ must be non-root, since such coverings do not modify the root label. Suppose the opposite, and let $C_i\lessdot C_{i+1}$ be the first covering instance involving the root. By Remark \ref{l1}, we have either $\min(X_1)=\cdots =\min(X_i)>\min(X_{i+1})\geq \cdots\geq\min(X_n)$ or
 $\max(X_1)=\cdots =\max(X_i)>\max(X_{i+1})\geq \cdots\geq\max(X_n)$, and therefore either
 $\min(X_1)>\min(X_n)$ or $\max(X_1)>\max(X_n)$, contradicting $X_1=X_n=X$.
\end{proof}

We are now in position to prove the main result of this section. 

\begin{prop} \label{lessdot-order}
 The GFO is a partial order.
\end{prop}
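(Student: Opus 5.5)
Since the GFO is by definition the reflexive and transitive closure of $\lessdot$, reflexivity and transitivity hold by construction. The only thing to prove is antisymmetry. This is equivalent to showing that the covering relation $\lessdot$ has no cycles: there is no chain $C_1\lessdot C_2\lessdot\cdots\lessdot C_n$ with $n\geq 2$ and $C_1=C_n$. Indeed, if $S\leq S'\leq S$ with $S\neq S'$, concatenating the two chains gives such a cycle. I will prove acyclicity by induction on the number of vertices $|H|$ of the hypergraph (equivalently, on the size of the constructs). The base case $|H|=1$ is trivial, since there is a single construct and no coverings at all.

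For the inductive step, suppose $C_1\lessdot\cdots\lessdot C_n=C_1$ is a cycle of constructs of $\hyper{H}$. Since $\operatorname{root}(C_1)=\operatorname{root}(C_n)$, Lemma \ref{l2} yields that all $C_i$ share the same root $X$. The proof of that lemma (via Remark \ref{l1}) shows more: every covering $C_i\lessdot C_{i+1}$ in the cycle is a non-root covering. Indeed, a root split strictly decreases $\max$ of the root and a root fusion strictly decreases $\min$, while no covering increases either quantity.

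If $X=H$, then $C_1=H$ is the one-node construct, whose only possible coverings are root splits. So no non-root covering exists, and the cycle must have length one, i.e. $n=1$, which is excluded. Otherwise write $C_i=X(C_{i,1},\ldots,C_{i,m})$, where $\hyper{H},X\leadsto H_1,\ldots,H_m$; this decomposition is the same for every $i$, since the root is fixed. By Remark \ref{lem:StabilityByContext}(a), and by the definition of non-root coverings, each step $C_i\lessdot C_{i+1}$ is a covering $C_{i,j}\lessdot C_{i+1,j}$ in exactly one component $j$, with all other components unchanged. Pick a component $j$ in which at least one step occurs (there is one because $n\geq 2$). Projecting the cycle onto component $j$ and discarding the steps where it is unchanged gives a nonempty cycle of coverings in the constructs of $\hyper{H}_j$. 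Here $|H_j|<|H|$, since $X\neq\emptyset$, so this contradicts the induction hypothesis.

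The main point needing care is checking that a covering inside a subconstruct is really a covering for the smaller ordered hypergraph $\hyper{H}_j$ (with the induced order). This means checking that fusions and splits of nodes of $C_{i,j}$ are defined identically whether viewed in $\hyper{H}$ or in $\hyper{H}_j$. For fusion this is immediate. For splits, it follows from Definition \ref{splittable-definition}, which is recursive and descends into the subconstruct. Everything else is routine bookkeeping relying on Remark \ref{l1} and Lemma \ref{l2}.
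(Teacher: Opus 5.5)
Your proof is correct and follows essentially the same route as the paper: induction on $|H|$, using Lemma \ref{l2} (via the $\min$/$\max$ monotonicity of Remark \ref{l1}) to force every covering in a putative cycle to be non-root, then projecting the cycle onto a component $\hyper{H}_j$ with $|H_j|<|H|$ to contradict the induction hypothesis. Your separate treatment of the case $X=H$, and your check that a covering inside a subconstruct is a covering for $\hyper{H}_j$, only make explicit details that the paper's interleaving argument leaves implicit.
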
 

\begin{proof}
 As the transitivity and reflexivity are clear, we only have to check the anti-symmetry. 
We prove that there is no non-trivial sequence of coverings $S=C_1\lessdot \cdots \lessdot C_n =S$ forming a loop, by induction on $|H|$. 
 If $|H|=1$, then $n=1$ and the claim holds trivially. The inductive case is treated as follows. By Lemma \ref{l2} and Remark \ref{lem:StabilityByContext}(b), we get that all the
$C_i$'s have the same root $X$, and that, writing $S=X(S_1,\ldots,S_n)$, the assumed sequence 
 $S=C_1\lessdot \cdots \lessdot C_n =S$ has to be an interleaving of putative sequences 
 $S_i\lessdot \cdots \lessdot S_i$, for all $i$. But those sequences are all empty by induction, and so is the one from $S$ to $S$. This concludes the proof.
\end{proof}
We represent in Figure \ref{fig:Hassediag1} and Figure \ref{fig:Hassediag2} the Hasse diagrams of the GFO on the faces of all the non-isomorphic different polytopes of dimension $2$. 
\begin{figure}[!b] 
 \begin{tabular}{cc}
\resizebox{4.5cm}{!}{\begin{tikzpicture}[scale=0.85,
  hasse/.style={
    draw,
    rounded corners,
    fill=#1,
    inner sep=2pt
  },
  tree/.style={font=\footnotesize},
  edge/.style={thick}
]

\node[hasse=MutedLavender!25] (c1) at (0,5) {
  \begin{tikzpicture}[tree]
    \node (a) at (0,0) {$1$};
    \node (b) at (-0.35,0.6) {$2$};
    \node (c) at (0.35,0.6) {$3$};
    \draw[edge] (b)--(a)--(c);
  \end{tikzpicture}
};

\node[hasse=DirtyYellow!25] (a1) at (-3.8,1) {
  \begin{tikzpicture}[tree]
    \node (a) at (0,0) {$\{1,3\}$};
    \node (b) at (0,0.7) {$2$};
    \draw[edge] (a)--(b);
  \end{tikzpicture}
};

\node[hasse=SageGreen!25] (a2) at (0,1) {
  \begin{tikzpicture}[tree]
    \node {$\{1,2,3\}$};
  \end{tikzpicture}
};

\node[hasse=SageGreen!25] (a3) at (3.8,1) {
  \begin{tikzpicture}[tree]
    \node (a) at (0,0) {$2$};
    \node (b) at (-0.35,0.6) {$1$};
    \node (c) at (0.35,0.6) {$3$};
    \draw[edge] (b)--(a)--(c);
  \end{tikzpicture}
};

\node[hasse=SlateBlue!25] (b1) at (1.85,3.1) {
  \begin{tikzpicture}[tree]
    \node (a) at (0,0) {$\{1,2\}$};
    \node (b) at (0,0.7) {$3$};
    \draw[edge] (a)--(b);
  \end{tikzpicture}
};

\node[hasse=DirtyYellow!25] (b2) at (1.85,-1.1) {
  \begin{tikzpicture}[tree]
    \node (a) at (0,0) {$\{2,3\}$};
    \node (b) at (0,0.7) {$1$};
    \draw[edge] (a)--(b);
  \end{tikzpicture}
};

\node[hasse=EggShell!25] (c2) at (0,-3) {
  \begin{tikzpicture}[tree]
    \node (a) at (0,0) {$3$};
    \node (b) at (-0.35,0.6) {$1$};
    \node (c) at (0.35,0.6) {$2$};
    \draw[edge] (b)--(a)--(c);
  \end{tikzpicture}
};

\draw[edge]
  (a1)--(c1)--(b1)--(a2)--(b2)--(c2)--(a1);

\draw[edge]
  (b1)--(a3)--(b2);
\end{tikzpicture}} \quad\quad  & \quad\quad \resizebox{4.5cm}{!}{\begin{tikzpicture}[scale=0.85,
  hasse/.style={
    draw,
    rounded corners,
    fill=#1,
    inner sep=2pt
  },
  tree/.style={font=\footnotesize},
  edge/.style={thick}
]

\node[hasse=MutedLavender!25] (c1) at (0,5) {
  \begin{tikzpicture}[tree]
    \node (a) at (0,0) {$1$};
    \node (b) at (-0.35,0.6) {$2$};
    \node (c) at (0.35,0.6) {$3$};
    \draw[edge] (b)--(a)--(c);
  \end{tikzpicture}
};

\node[hasse=SlateBlue!25] (a1) at (-1.85,3.1) {
  \begin{tikzpicture}[tree]
    \node (a) at (0,0) {$\{1,3\}$};
    \node (b) at (0,0.7) {$2$};
    \draw[edge] (a)--(b);
  \end{tikzpicture}
};

\node[hasse=SageGreen!25] (a2) at (0,1) {
  \begin{tikzpicture}[tree]
    \node {$\{1,2,3\}$};
  \end{tikzpicture}
};

\node[hasse=SageGreen!25] (a3) at (3.8,1) {
  \begin{tikzpicture}[tree]
    \node (a) at (0,0) {$2$};
    \node (b) at (-0.35,0.6) {$1$};
    \node (c) at (0.35,0.6) {$3$};
    \draw[edge] (b)--(a)--(c);
  \end{tikzpicture}
};

\node[hasse=SlateBlue!25] (b1) at (1.85,3.1) {
  \begin{tikzpicture}[tree]
    \node (a) at (0,0) {$\{1,2\}$};
    \node (b) at (0,0.7) {$3$};
    \draw[edge] (a)--(b);
  \end{tikzpicture}
};

\node[hasse=DirtyYellow!25] (b2) at (1.85,-1.1) {
  \begin{tikzpicture}[tree]
    \node (a) at (0,0) {$\{2,3\}$};
    \node (b) at (0,0.7) {$1$};
    \draw[edge] (a)--(b);
  \end{tikzpicture}
};

\node[hasse=DirtyYellow!25] (b3) at (-1.85,-1.1) {
   \begin{tikzpicture}[tree]
    \node (a) at (0,0) {$3$};
    \node (b) at (0,0.7) {$\{1,2\}$};
    \draw[edge] (b)--(a);
  \end{tikzpicture}
};

\node[hasse=EggShell!25] (c2) at (0,-3) {
  \begin{tikzpicture}[tree]
    \node (a) at (0,0) {$3$};
    \node (b) at (0,0.6) {$2$};
    \node (c) at (0,1.2) {$1$};
    \draw[edge] (a)--(b)--(c);
  \end{tikzpicture}
};

\node[hasse=SageGreen!25] (d1) at (-3.8,1) {
  \begin{tikzpicture}[tree]
    \node (a) at (0,0) {$3$};
    \node (b) at (0,0.6) {$1$};
    \node (c) at (0,1.2) {$2$};
    \draw[edge] (a)--(b)--(c);
  \end{tikzpicture}
};

\draw[edge]
 (c2)--(b3)--(a2)--(b2)--(c2);
 
\draw[edge]
(b3)--(d1)--(a1)--(c1)--(b1)--(a3)--(b2);
\draw[edge]
(a2)--(b1);
\end{tikzpicture}}\\
Simplex \raisebox{-5pt}{\begin{tikzpicture}
\node[inner sep=2pt]  (1) at (0,0) {\footnotesize $1$};
\node[inner sep=2pt]  (2) at (0.6,0) {\footnotesize $2$};
\node[inner sep=2pt]  (3) at (1.2,0) {\footnotesize $3$};
\draw[rounded corners=10pt] (-0.3,-0.3) rectangle (1.5,0.3);
\end{tikzpicture}} &  Cube  \raisebox{-5pt}{\begin{tikzpicture}
\node[inner sep=2pt]  (1) at (0,0) {\footnotesize $1$};
\node[inner sep=2pt]  (2) at (0.6,0) {\footnotesize $2$};
\node[inner sep=2pt]  (3) at (1.2,0) {\footnotesize $3$};
\draw[rounded corners=10pt] (-0.3,-0.3) rectangle (1.5,0.3);
\draw (1)--(2);
\end{tikzpicture}}  \\[0.5cm]
\resizebox{4.5cm}{!}{\begin{tikzpicture}[scale=0.85,
  hasse/.style={
    draw,
    rounded corners,
    fill=#1,
    inner sep=2pt
  },
  tree/.style={font=\footnotesize},
  edge/.style={thick}
]

\node[hasse=espresso!25] (c1) at (0,5) {
  \begin{tikzpicture}[tree]
    \node (a) at (0,0) {$1$};
    \node (b) at (-0.35,0.6) {$2$};
    \node (c) at (0.35,0.6) {$3$};
    \draw[edge] (b)--(a)--(c);
  \end{tikzpicture}
};

\node[hasse=DirtyYellow!25] (a1) at (-3.8,1) {
  \begin{tikzpicture}[tree]
    \node (a) at (0,0) {$\{1,3\}$};
    \node (b) at (0,0.7) {$2$};
    \draw[edge] (a)--(b);
  \end{tikzpicture}
};

\node[hasse=SageGreen!25] (a2) at (0,1) {
  \begin{tikzpicture}[tree]
    \node {$\{1,2,3\}$};
  \end{tikzpicture}
};

\node[hasse=SlateBlue!25] (a3) at (3.8,1) {
  \begin{tikzpicture}[tree]
    \node (a) at (0,0) {$2$};
    \node (b) at (0,0.7) {$\{1,3\}$};
    \draw[edge] (b)--(a);
  \end{tikzpicture}
};

\node[hasse=DustyRed!25] (b1) at (1.65,3.5) {
  \begin{tikzpicture}[tree]
    \node (a) at (0,0) {$\{1,2\}$};
    \node (b) at (0,0.7) {$3$};
    \draw[edge] (a)--(b);
  \end{tikzpicture}
};

\node[hasse=DirtyYellow!25] (b2) at (1.65,-1.5) {
  \begin{tikzpicture}[tree]
    \node (a) at (0,0) {$\{2,3\}$};
    \node (b) at (0,0.7) {$1$};
    \draw[edge] (a)--(b);
  \end{tikzpicture}
};

\node[hasse=EggShell!25] (c2) at (0,-3) {
  \begin{tikzpicture}[tree]
    \node (a) at (0,0) {$3$};
    \node (b) at (-0.35,0.6) {$1$};
    \node (c) at (0.35,0.6) {$2$};
    \draw[edge] (b)--(a)--(c);
  \end{tikzpicture}
};

\node[hasse=SageGreen!25] (d1) at (2.65,-0.25) {
  \begin{tikzpicture}[tree]
    \node (a) at (0,0) {$2$};
    \node (b) at (0,0.6) {$3$};
    \node (c) at (0,1.2) {$1$};
    \draw[edge] (a)--(b)--(c);
  \end{tikzpicture}
};

\node[hasse=MutedLavender!25] (d2) at (2.65,2.25) {
  \begin{tikzpicture}[tree]
    \node (a) at (0,0) {$2$};
    \node (b) at (0,0.6) {$1$};
    \node (c) at (0,1.2) {$3$};
    \draw[edge] (a)--(b)--(c);
  \end{tikzpicture}
};

\draw[edge]
(b2)--(c2);
\draw[edge]
(b2)--(d1)--(a3)--(d2)--(b1)--(c1)--(a1)--(c2);
\draw[edge]
(b1)--(a2)--(b2);
\end{tikzpicture}} \quad\quad & \quad\quad  \resizebox{4.5cm}{!}{\begin{tikzpicture}[scale=0.85,
  hasse/.style={
    draw,
    rounded corners,
    fill=#1,
    inner sep=2pt
  },
  tree/.style={font=\footnotesize},
  edge/.style={thick}
]

\node[hasse=MutedLavender!25] (c1) at (0,5) {
  \begin{tikzpicture}[tree]
    \node (a) at (0,0) {$1$};
    \node (b) at (0,0.6) {$2$};
    \node (c) at (0,1.2) {$3$};
    \draw[edge] (a)--(b)--(c);
  \end{tikzpicture}
};

\node[hasse=SlateBlue!20] (a1) at (-1.85,3.1) {
  \begin{tikzpicture}[tree]
    \node (a) at (0,0) {$1$};
    \node (b) at (0,0.7) {$\{2,3\}$};
    \draw[edge] (a)--(b);
  \end{tikzpicture}
};

\node[hasse=SageGreen!25] (a2) at (0,1) {
  \begin{tikzpicture}[tree]
    \node {$\{1,2,3\}$};
  \end{tikzpicture}
};

\node[hasse=SageGreen!25] (a3) at (3.8,1) {
  \begin{tikzpicture}[tree]
    \node (a) at (0,0) {$2$};
    \node (b) at (-0.35,0.6) {$1$};
    \node (c) at (0.35,0.6) {$3$};
    \draw[edge] (b)--(a)--(c);
  \end{tikzpicture}
};

\node[hasse=SlateBlue!20] (b1) at (1.85,3.1) {
  \begin{tikzpicture}[tree]
    \node (a) at (0,0) {$\{1,2\}$};
    \node (b) at (0,0.7) {$3$};
    \draw[edge] (a)--(b);
  \end{tikzpicture}
};

\node[hasse=DirtyYellow!25] (b2) at (1.85,-1.1) {
  \begin{tikzpicture}[tree]
    \node (a) at (0,0) {$\{2,3\}$};
    \node (b) at (0,0.7) {$1$};
    \draw[edge] (a)--(b);
  \end{tikzpicture}
};

\node[hasse=DirtyYellow!25] (b3) at (-1.85,-1.1) {
   \begin{tikzpicture}[tree]
    \node (a) at (0,0) {$\{1,3\}$};
    \node (b) at (0,0.7) {$2$};
    \draw[edge] (b)--(a);
  \end{tikzpicture}
};

\node[hasse=EggShell!25] (c2) at (0,-3) {
  \begin{tikzpicture}[tree]
    \node (a) at (0,0) {$3$};
    \node (b) at (-0.35,0.6) {$1$};
    \node (c) at (0.35,0.6) {$2$};
    \draw[edge] (b)--(a)--(c);
\end{tikzpicture}
};

\node[hasse=SageGreen!25] (d1) at (-3.8,1) {
  \begin{tikzpicture}[tree]
    \node (a) at (0,0) {$1$};
    \node (b) at (0,0.6) {$3$};
    \node (c) at (0,1.2) {$2$};
    \draw[edge] (a)--(b)--(c);
  \end{tikzpicture}
};

\draw[edge]
 (c2)--(b3);
\draw[edge](a2)--(b2)--(c2);
 
\draw[edge]
(b3)--(d1)--(a1)--(c1)--(b1)--(a3)--(b2);
\draw[edge]
(a2)--(b1);
\end{tikzpicture}}\\
Cube  \raisebox{-5pt}{\begin{tikzpicture}
\node[inner sep=2pt]  (1) at (0,0) {\footnotesize $1$};
\node[inner sep=2pt]  (2) at (0.6,-0.05) {\footnotesize $2$};
\node[inner sep=2pt]  (3) at (1.2,0) {\footnotesize $3$};
\draw[rounded corners=10pt] (-0.3,-0.3) rectangle (1.5,0.3);
\draw (1) to[out=25,in=155] (3);
\end{tikzpicture}} &  Cube  \raisebox{-5pt}{\begin{tikzpicture}
\node[inner sep=2pt]  (1) at (0,0) {\footnotesize $1$};
\node[inner sep=2pt]  (2) at (0.6,0) {\footnotesize $2$};
\node[inner sep=2pt]  (3) at (1.2,0) {\footnotesize $3$};
\draw[rounded corners=10pt] (-0.3,-0.3) rectangle (1.5,0.3);
\draw (2)--(3);
\end{tikzpicture}}
\end{tabular}

 
\caption{Hasse diagrams of the GFO for the simplex and the hypercubes in dimension 2.}
\label{fig:Hassediag1}
\end{figure}
\begin{proposition} \label{MaxWeakOrder}
The generalised flip order has a maximum $\top$ (resp. a minimum $\bot$), which is the construction defined recursively by choosing, at each step, the minimum (resp. the maximum) vertex of the current hypergraph as the root label.
\end{proposition}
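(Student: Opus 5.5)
We argue by induction on $|H|$, proving separately that every construct $S:\hyper{H}$ satisfies $S\leq\top$ and $\bot\leq S$. Since the GFO is a partial order (Proposition \ref{lessdot-order}), this yields maximum and minimum. Write $m=\min(H)$ and $M=\max(H)$. By definition, $\top=m(\top_1,\ldots,\top_k)$, where $\hyper{H},\set{m}\leadsto H_1,\ldots,H_k$ and $\top_i$ is the corresponding construction of $\hyper{H}_i$. Dually, $\bot=M(\bot_1,\ldots,\bot_l)$.

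For the maximum, the plan is to move any $S=X(S_1,\ldots,S_n)$ upward until its root is $\set{m}$, and then finish by induction.

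First suppose $m\notin X$. Then $m$ lies in some $S_i$, and by induction $S_i\leq\top_{\hyper{H}_i}$, whose root is $\set{m}$. Remark \ref{lem:StabilityByContext}(b) gives $S\leq X(\ldots,\top_{\hyper{H}_i},\ldots)$. In the latter construct the child of $X$ is $\set{m}$, and $X>\set{m}$ because $m$ is the global minimum. A root fusion $\subsetdot_{X,\set{m}}$ therefore moves it up to a construct whose root $X\cup\set{m}$ contains $m$.

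So we may assume $m\in X$. If $X\neq\set{m}$, we perform a root split with $U=\set{m}$ and $V=X\setminus\set{m}$; the order condition $U<V$ holds. The hard point is splittability, namely that $V$ must lie in a single connected component of $\hyper{H}\setminus\set{m}$, and this can fail. To avoid it, we split off instead $V=\set{\max(X)}$ with $U=X\setminus V$. This split is always possible by the remark after Definition \ref{splittable-definition}, and it keeps the root minimum equal to $m$ while shrinking $|X|$. The new root $U$ still contains $m$, so iterating this step on $|X|$ reaches root $\set{m}$.

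Once the root is $\set{m}$, the children are constructs $S'_i$ of the components $\hyper{H}_i$ of $\hyper{H}\setminus\set{m}$. Induction gives $S'_i\leq\top_i$, and Remark \ref{lem:StabilityByContext}(b) concludes $S\leq\top$.

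The minimum is handled dually, with $M$ in place of $m$. If $M\notin X$, we first push $S_i$ down to $\bot_{\hyper{H}_i}$, whose root is $\set{M}$, and then observe that $S'\lessdot S$ by the fusion $\subsetdot_{\set{M},X}$. Here the parent $X$ and the child $\set{M}$ must satisfy the order condition of a fusion read downward, and $X<\set{M}$ indeed holds. If $M\in X$, we need a construct $S''$ with root $\set{M}$ (or with a smaller root containing $M$) such that $S''\lessdot S$ by a fusion. The fusion condition then requires the parent $U$ and the child $V$ with $U\cup V=X$ to satisfy $U>V$. Taking $U=\set{M}$ and $V=X\setminus\set{M}$ satisfies this, but $V$ may not be connected in $\hyper{H}\setminus\set{M}$. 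The fix is to take $V$ to be one connected component, within $\hyper{H}\setminus\set{M}$, of $X\setminus\set{M}$ together with its attached subtrees, and iterate.

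The main obstacle is exactly this connectivity bookkeeping in the reverse direction: checking that the inverse fusion produces a valid construct by Lemma \ref{splitformally}, and that the new root strictly shrinks while still containing $M$. Once that is settled, induction on the children finishes the argument as before.
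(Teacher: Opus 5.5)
Your argument for the maximum is correct, and it takes a different route from the paper. The paper argues globally. It shows that $\top$ has no upper cover, and that every $S\neq\top$ has one: split $\max X$ off a non-singleton node $X$, or, in a construction, contract an edge whose parent is larger than its child. Finiteness plus acyclicity (Proposition \ref{lessdot-order}) then force every ascending chain to end at $\top$, and $\bot$ follows by literal symmetry. Your root-by-root induction builds an explicit chain from $S$ to $\top$, and it does not even need acyclicity to get $S\leq\top$.

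The minimum, however, is not proved. Take the case $M\in X=\operatorname{root}(S)$ with $|X|\geq 2$. The lower cover you need is a fusion $S''\subsetdot_{U,V}S$. Besides requiring $V$ to lie in one component of $\mathbf{H}\setminus U$, a fusion requires the order condition $U>V$, i.e.\ $\min U>\max V$. Your proposed $V$ (the part of $X\setminus\{M\}$ in one component of $\mathbf{H}\setminus\{M\}$) ignores this condition, and it can fail for every choice. For example, let $\mathbf{H}=\{\{1\},\{2\},\{3\},\{4\},\{1,3\},\{1,4\},\{2,4\}\}$ and $S=\{1,2,3,4\}$. The components of $\mathbf{H}\setminus\{4\}$ are $\{1,3\}$ and $\{2\}$, and neither $(U,V)=(\{2,4\},\{1,3\})$ nor $(U,V)=(\{1,3,4\},\{2\})$ satisfies $U>V$.

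So the real obstruction is the order condition, not connectivity. It disappears if you mirror what you did for the maximum: take $V=\{\min X\}$ and $U=X\setminus\{\min X\}$. A singleton lies in a single component of $\mathbf{H}\setminus U$, so the argument of Lemma \ref{splitformally} makes $U(\{\min X\}(\ldots),\ldots)$ a construct. It is covered by $S$ via a root fusion, since $U>\{\min X\}$. Its root is strictly smaller and still contains $M$. In the example this gives $\{1,2,3,4\}\gtrdot\{2,3,4\}(1)\gtrdot\{3,4\}(1,2)\gtrdot 4(3(1),2)=\bot$.

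There is also a smaller slip in your case $M\notin X$. Contracting the edge between $X$ and its child $\{M\}$ is the right move. But in the terms of Definition \ref{cov}, the resulting covering is a root split of the lower construct (the edge is $\subsetdot_{X,\{M\}}$, with the smaller set as parent), not a fusion $\subsetdot_{\{M\},X}$.
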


\begin{proof}
That $\top$ is a maximal element is obvious: no node can be split and no edge can be contracted. Symmetrically, $\bot$ is minimal. We show that for any construct $S$ different from $\top$, there exists
$T$ such that $S\lessdot T$. There are two cases: if $S$ is not a construction, it has at least one node $X$ with $|X|\geq 2$, which can always be split as $X=(X\setminus\max(X))\sqcup\set{\max(X)}$. If $S$ is a construction, we claim that it has at least two nodes $a$ and $b$ such that $a$ is the parent of $b$ and $a>b$. 
We prove the claim as follows. Since $S\neq\top$, there exist nodes $a',b'$ such that $a'$ is an ancestor of $b'$ and $a'>b'$. Let $a'=a_0,a_1,\ldots,a_n=b'$ be the sequence relating $a'$ to $b'$ in $S$. Then we set $a=a_{i_0}$ and $b=a_{i_0+1}$, where $i_0$ is the first $i$ such that $a_i>a_{i+1}$, which proves the claim.
By the claim, one can construct a covering by contracting the edge relating $a$ and $b$. 

Now, by finiteness of the set of constructs of a given finite hypergraph, every chain $S=S_0\lessdot S_1\lessdot\ldots\lessdot S_n\lessdot\ldots$ must stop, 
and by the above arguments can stop only at $\top$, showing that $S\leq\top$ for every $S$, i.e. $\top$ is not only maximal but is the maximum. We get that $\bot$ is the minimum by a symmetric reasoning.
\end{proof}
\begin{figure}[!t]
\input{diags2}
 \caption{Hasse diagrams of the GFO for the pentagons and the hexagon in dimension $2$.}
 \label{fig:Hassediag2}
\end{figure}
We end this section by spelling out the key motivating examples of the GFO, and by giving another example of GFO (on simplices).

\begin{example} \label{extends-PR-K}
 The generalised flip order coincides with the partial order defined on all faces of the associahedra and permutohedra in \cite{PalaciosRonco}. Indeed, via the bijective correspondences recalled in Section \ref{fundamental-examples-section}, the definition of the GFO literally instantiates to their Definition 30 and to the formulation given in their Lemma 17. We also note that, for permutohedra, the order was already present in \cite{KLNPS}. 
\end{example}

\begin{example} 
The covering relation $\lessdot$ on the simplex $\hyper{S}^X$ can be described as follows (using the description of constructs as non-empty subsets of $X$):
$$\begin{array}{ll}
X\lessdot X\cup\set{x} & \mbox{if}\; x<X\\
X \lessdot (X\setminus\set{\max(X)}) & \mbox{if}\;|X|\geq 2.
\end{array}$$
 \end{example}

 \subsection{Generalised flip order is {\em the generalised flip order}} \label{2.2}

The order that we just defined generalises the flip order defined by Barnard and McConville \cite{BM}. We recall the definition of this order in our notations, and in the setting of nestohedra. Their definition was given on graph associahedra, but the upgrade to nestohedra is almost transparent.
\begin{lemma}\label{twonodes}
Let $\hyper{H}$ be a hypergraph and let $x,y\in H$. Suppose that 
\begin{equation}\begin{array}{llll}
{\bf{H}},\{x\}\leadsto K_1,\ldots , K_p, & y\in K_i & \mbox{and} & {\bf H}_{{K}_i},\{y\}\leadsto K_{(i,1)},\ldots,K_{(i,q)}\\
{\bf{H}},\{y\}\leadsto L_1,\ldots , L_r, & x\in L_j & \mbox{and} & {\bf H}_{{L}_j},\{x\}\leadsto L_{(j,1)},\ldots,L_{(j,s)}.
\end{array}\label{tn}\end{equation} Then there exists a bijection $\varphi: (\set{1,\ldots,r}\setminus\set{j})\cup\set{(j,1),\ldots,(j,s)}\rightarrow \linebreak (\set{1,\ldots,p}\setminus\set{i})\cup\set{(i,1),\ldots,(i,q)}$, such that $L_t=K_{\varphi(t)}$ for all $t$ in the domain of $\varphi$.
\end{lemma}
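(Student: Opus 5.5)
The plan is to show that both index families list exactly the connected components of the same hypergraph, namely $\restrH{H}{\set{x,y}}$. Once that is done, $\varphi$ is forced: it sends each index $t$ to the unique index $\varphi(t)$ with $L_t=K_{\varphi(t)}$. This $\varphi$ is a bijection because the connected components of a hypergraph form a partition of its vertex set, and so they are uniquely determined.

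First I would show that the components of $\restrH{H}{\set{x,y}}$ are the $K_m$ for $m\neq i$ together with the $K_{(i,1)},\ldots,K_{(i,q)}$. I would prove this through a general observation about componentwise restriction. Suppose $\hyper{G}$ is a hypergraph with components $G_1,\ldots,G_p$ and $Z\inc G_i$. Then every hyperedge of $\hyper{G}_{G\setminus Z}$ lies inside some $G_m$, because hyperedges of $\hyper{G}$ are contained in single components. Hence
$$\hyper{G}_{G\setminus Z}=\Union_{m\neq i}\hyper{G}_{G_m}\union \hyper{G}_{G_i\setminus Z}.$$
By uniqueness of the decomposition into connected components, the components of $\hyper{G}_{G\setminus Z}$ are the $G_m$ with $m\neq i$ (each still connected) together with the components of $\hyper{G}_{G_i\setminus Z}$.

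I would apply this with $\hyper{G}=\restrH{H}{\set{x}}$ and $Z=\set{y}\inc K_i$. This needs two routine identities: $(\hyper{H}_A)_B=\hyper{H}_B$ for $B\inc A$, so that $\hyper{G}_{G\setminus\set{y}}=\restrH{H}{\set{x,y}}$, and $\hyper{G}_{K_i}=\hyper{H}_{K_i}$. These give exactly the first description above.

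The symmetric argument with the roles of $x$ and $y$ exchanged shows that the components of $\restrH{H}{\set{x,y}}$ are also the $L_m$ for $m\neq j$ together with the $L_{(j,1)},\ldots,L_{(j,s)}$. Both families are therefore the same set of subsets of $H$, namely the parts of the unique connected-component partition. This yields the bijection $\varphi$ with $L_t=K_{\varphi(t)}$.

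The only step needing care is the observation about components: I have to check that the parts listed are genuinely connected and that their restrictions cover all hyperedges. Both facts follow immediately from the definitions of restriction and connected components recalled in Section \ref{hypergraphs-subsection}, so I do not expect a real obstacle.
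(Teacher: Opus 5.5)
Your proposal is correct and follows essentially the same route as the paper. Both arguments identify the two index families as two listings of the connected components of $\hyper{H}\backslash\set{x,y}$ (removing $x$ then $y$, versus $y$ then $x$) and obtain $\varphi$ from the uniqueness of the component partition; your componentwise-restriction observation just makes explicit the step the paper treats as evident.
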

\begin{proof}
The hypergraph ${\bf H}\backslash\{x,y\}$ can be obtained in the following two ways:
\begin{itemize}
\item[($yx$)] first remove $y$, and then $x$, leading to $${\bf H}\backslash\{x,y\}\leadsto L_1,\ldots ,L_{j-1},L_{(j,1)},\ldots,L_{(j,s)},L_{j+1},\ldots,L_r,$$
\item[($xy$)] first remove $x$, and then $y$, leading to $${\bf H}\backslash\{x,y\}\leadsto K_1,\ldots ,K_{i-1},K_{(i,1)},\ldots,K_{(i,q)},K_{i+1},\ldots,K_p.$$
\end{itemize}
Since ${\bf H}\backslash\{x,y\}$ admits a unique partition into connected components, each component obtained by ($yx$) appears exactly once among the connected components obtained by ($xy$), and vice versa, and this matching defines the bijection $\varphi$.
\end{proof}
 
\begin{definition}\label{fl-cover}
Let $S:\hyper{K}$ be a construction, and let $\{x\}$ and $\{y\}$ be two adjacent nodes of $S$, such that $\{x\}$ is a parent of $\{y\}$ and $y<x$. Introduce the shorthand ${\bf H}:=\hyper{K}_{{\valup_S(\set{x})}}$ and suppose that \eqref{tn} holds for ${\bf H}$, $x$ and $y$. Then the subconstruct of $S$ rooted at $x$ has the form $$\restrconstr{S}{{\bf H}}=x(S_1,\ldots,S_{i-1},y(S_{(i,1)},\ldots, S_{(i,q)}),S_{i+1},\ldots,S_p):\hyper{H},$$ where, writing $S_i:=y(S_{(i,1)},\ldots, S_{(i,q)})$, $S_l:{\bf K}_l$ for all $1\leq l\leq p$.
We define the {\em $(x,y)$-flip of $S$} as the non-rooted planar tree $\mathsf{Flip}_S(x,y)$ obtained from $S$ by replacing $\restrconstr{S}{{\bf H}}$ with $$y(S_{\varphi(1)},\ldots,S_{\varphi(j-1)},x(S_{\varphi(j,1)},\ldots,S_{\varphi(j,s)}),
S_{\varphi(j+1)},\ldots,S_{\varphi(s)}),$$
with $\varphi$ as in Lemma \ref{twonodes}.
\end{definition}

The following claim is a direct consequence of Lemma \ref{twonodes}.
\begin{lemma} \label{diamond-lemma}
The non-rooted planar tree $\mathsf{Flip}_S(x,y)$ is a construction of $\hyper{K}$. In fact, using our previously introduced notation (cf. Definition \ref{basic_split}), we have 
$$
{\mathsf{Flip}}_S(x,y) = S[\{x,y\}/x(y)][y(x)/\{x,y\}] .$$
\end{lemma}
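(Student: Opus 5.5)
The claim is local, so I will first reduce to the case where the edge $x(y)$ sits at the root. By the recursive definitions of substitution in Definition~\ref{basic_split} and of $\mathsf{Flip}_S(x,y)$, both operations act only on the subconstruct $\restrconstr{S}{{\bf H}}$ rooted at $x$, with ${\bf H}=\hyper{K}_{\valup_S(\set{x})}$, and leave the rest of $S$ untouched. Grafting a construct of ${\bf H}$ back into that position again gives a construct of $\hyper{K}$; this is the same context-stability that underlies Remark~\ref{lem:StabilityByContext}. So it suffices to treat $S=x(S_1,\ldots,S_{i-1},y(S_{(i,1)},\ldots,S_{(i,q)}),S_{i+1},\ldots,S_p):{\bf H}$ and to prove the displayed identity there. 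Once the identity holds, the first assertion follows from Lemma~\ref{splitformally}, because the right-hand side is obtained by a contraction followed by a split of constructs. The result is a construction because all its nodes are singletons.

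Next I compute the right-hand side. Contracting the edge $x(y)$ gives
\[
S[\{x,y\}/x(y)]=\{x,y\}(S_1,\ldots,S_{i-1},S_{(i,1)},\ldots,S_{(i,q)},S_{i+1},\ldots,S_p).
\]
This is a construct by Lemma~\ref{splitformally}. By the ($xy$) computation in the proof of Lemma~\ref{twonodes}, its children are exactly the constructs of the connected components of ${\bf H}\backslash\{x,y\}$. Now I check that the root $\{x,y\}$ is $(\{y\},\{x\})$-splittable. We have $\{y\}<\{x\}$ since $y<x$, and $\{x\}$ is a singleton, so it trivially lies in one connected component of ${\bf H}\backslash\{y\}$, namely $L_j$.

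Then I apply \eqref{splitdef} with $U=\{y\}$ and $V=\{x\}$. The children of the root are indexed by the components of ${\bf H}\backslash\{x,y\}$, and I separate those contained in $L_j$ from the rest. Using the ($yx$) decomposition in Lemma~\ref{twonodes}, the components contained in $L_j$ are exactly $L_{(j,1)},\ldots,L_{(j,s)}$, and the others are the $L_t$ with $t\neq j$. The bijection $\varphi$ identifies these with the $K$-indexed components, so the subtree sitting on $L_t$ is $S_{\varphi(t)}$. The split therefore yields
\[
y(S_{\varphi(1)},\ldots,x(S_{\varphi(j,1)},\ldots,S_{\varphi(j,s)}),\ldots,S_{\varphi(r)}),
\]
which is exactly $\mathsf{Flip}_S(x,y)$.

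The main obstacle is this last bookkeeping step. I need that a component $K_{\varphi(t)}$ of ${\bf H}\backslash\{x,y\}$ lies in the component of ${\bf H}\backslash\{y\}$ containing $x$ precisely when $t$ is of the form $(j,\cdot)$. This follows because each component of ${\bf H}\backslash\{x,y\}$ lies in a unique component of ${\bf H}\backslash\{y\}$, and the components of ${\bf H}\backslash\{x,y\}$ inside $L_j$ are by definition the $L_{(j,\cdot)}$. I will also note that the last index in the definition of $\mathsf{Flip}$, written $\varphi(s)$, should read $\varphi(r)$.
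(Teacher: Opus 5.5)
Your proof is correct and follows the same route as the paper, which simply declares the lemma a direct consequence of Lemma~\ref{twonodes}. You spell out what the paper leaves implicit: the contraction to the node $\{x,y\}$, its $(\{y\},\{x\})$-splittability (trivial because $\{x\}$ is a singleton), and the matching of children via the bijection $\varphi$. Your correction of the final index $\varphi(s)$ to $\varphi(r)$ in Definition~\ref{fl-cover} is also right.
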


\begin{definition}[The flip order] \label{flip-order}
The flip order  is the partial order $\leq_{\operatorname{flip}}$ defined on the set of all constructions of some hypergraph ${\bf K}$ as the reflexive and transitive closure of the covering relation $S\lessdot_{\operatorname{flip}}\mathsf{Flip}_S(x,y)$, where $S$ and $\mathsf{Flip}_S(x,y)$ are like in Definition \ref{fl-cover}. 
\end{definition}

\begin{remark} \label{natural order}
Barnard and McConville prove that the above definition gives actually a partial order --  i.e., that the transitive relation of the covering relation $S\lessdot_{\operatorname{flip}}\mathsf{Flip}_S(x,y)$ is cycle-free --
by showing that it can be alternatively defined as the order given on the 0-skeleton by any orientation vector of decreasing coordinates, for a suitable realisation of the graph associahedron as a certain Minkowski sum.
It turns out that the same holds for the class of all generalised permutohedra, namely that there is a natural order on their set of vertices arising from  such orientation vectors and not depending on the particular choice of the vector provided the relative order of the coordinates is respected.  It is known that graph associahedra, through their Minkowski sum realisation, are a subclass of generalised permutohedra. It is therefore more appropriate to quote the flip order as a combinatorial  characterisation, in the case of graph associahedra, of this more general order which is implicit in the original work of Postnikov \cite{P09}, rather than as  ``patient zero".
\end{remark}
For {\em right-filled} graphs, i.e., graphs ${\bf G}=(V,E)$ with the property that, if $\{i,k\}\in E$ then $\{j,k\}\in E$ for each $1\leq i<j<k\leq n$, Barnard and McConville also provide a characterisation of $\leq_{\operatorname{flip}}$ in terms of {\em inversions}, as follows. For a construction $T:{\bf G}$ and nodes $a$ and $b$ of $T$, they define $b<_{T} a$ iff the unique
path from $b$ to the root of $T$ passes through $a$ and
$$\Inv(T):=\{(a,b)\,|\, a<b \mbox{ and } b<_{T} a\},$$
and they prove that 
 \begin{equation}S\leq_{\operatorname{flip}} T \quad \mbox{if and only if} \quad \Inv(S)\subseteq \Inv(T).\label{inversions_char}\end{equation}

\begin{remark}
The characterisation \eqref{inversions_char} does not apply to non-right-filled graphs, such as the pentagon \raisebox{-2pt}{\begin{tikzpicture}
\node[inner sep=2pt] (1) at (0,0) {\footnotesize $2$};
\node[inner sep=2pt] (2) at (0.6,0) {\footnotesize $1$};
\node[inner sep=2pt] (3) at (1.2,0) {\footnotesize $3$};
\draw (1)--(2)--(3);
\end{tikzpicture}} (see Figure \ref{fig:Hassediag2}).
 Indeed, while $$2(1(3))\leq_{\operatorname{flip}} 1(2,3),$$ the corresponding sets of inversions are incomparable: $$\Inv(2(1(3)))=\{(1,3),(2,3)\} \quad \mbox{ and } \quad \Inv(1(2,3))=\{(1,2),(1,3)\}.$$ The remaining three graphs of Figure \ref{fig:Hassediag2} are right-filled, so \eqref{inversions_char} applies to their constructions. 
\end{remark}

In Proposition \ref{GFO-FO} below, we shall compare the flip order with our generalised flip order. For one direction, we shall need the following definition.
\begin{definition} \label{hereditarily-ordered-def}
let $\hyper{H}$ be an ordered hypergraph. We say that $\hyper{H}$ is {\em hereditarily} ordered if for
any non-empty $X\inc H$, the induced decomposition $\hyper{H},X\leadsto H_1,\ldots,H_n$ can be ordered in such a way that $H_1<H_2<\cdots<H_n$.
\end{definition}
\begin{remark} The notion of hereditarily ordered hypergraph appeared already, without being named, in \cite{PLBJ1}, where we defined ordered universes as being made of hypergraphs satisfying the property coined in Definition \ref{hereditarily-ordered-def}. This condition is indeed very natural, in view of the invariants needed for a correct inductive definition of the shuffle product in the ordered setting (see Section \ref{spr} and Remark \ref{ordered-delegation}).
\end{remark}
We put Definition \ref {hereditarily-ordered-def} to immediate use in the following lemma. We have observed in \S \ref{gfo_def} that, for a construct $S:{\bf H}$ and a node $Y$ of $S$, a splitting $Y=Y_1\cup Y_2$, for $Y_1<Y_2$, is not always possible. But under the hypothesis that $\hyper{H}$ is hereditarily ordered, an
 arbitrary node partition $Y=Y_1 < Y_2$ can be dealt with by iterating in a particular way the base splitting operation of Definition \ref{basic_split}. 
\begin{lemma} \label{squashing-lemma}
Let $\hyper{K}$ be a hereditarily ordered hypergraph, let $S:\hyper{K}$ and let $Y$ be a node of $S$. Introduce the shorthand ${\bf H}:={\bf K}_{\valup_S(Y)}$ and suppose that ${\bf H},Y\leadsto H_1,\dots,H_n$ and that $\restrconstr{S}{{\bf H}}=Y(S_1,\ldots,S_n)$, where $S_i:{\bf H}_i$ for all $1\leq i\leq n$. Suppose that $Y=Y_1\cup Y_2$ is a partition of $Y$ such that $Y_1<Y_2$. Suppose, in addition, that ${\bf H},Y_1 \leadsto H'_1,\ldots H'_p$, let $J=\setc{i}{1\leq i\leq p \;\textrm{and} \;H'_i\cap Y_2\neq\emptyset}=\set{i_1<\cdots <i_q}$, and write $Y_2^l := Y_2\cap H'_{i_l}$. Then there exist $j_1,\ldots , j_q$ and $k_1,\ldots,k_q$, such that $0\leq j_1\leq \cdots \leq j_q$ and 
$j_i\leq k_i\leq j_{i+1}$, for all $1\leq i\leq q-1$ and $j_q\leq k_q\leq n$$$\begin{array}{lll}
T' & := &Y_1(S_1,\ldots S_{j_1},\\
&& Y_2^{1}(S_{j_1+1},\ldots,S_{k_1}),S_{k_1+1},\ldots S_{j_2},\\
&& Y_2^{2}(S_{j_2+1},\ldots,S_{k_2}),\ldots, \\
&&Y_2^{q}(S_{j_q+1},\ldots,S_{k_q}),S_{k_q+1},\ldots, S_n)
\end{array}$$
is a construct of $\hyper{H}$ and
$S\lessdot^+ T$, where $T$ is obtained by replacing $\restrconstr{S}{{\bf H}}$ with $T'$ in $S$.
\end{lemma}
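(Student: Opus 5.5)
We reduce to the case where $Y$ is the root, i.e. $S=\restrconstr{S}{{\bf H}}$. Indeed, if $S\lessdot^+ T$ holds for the subconstruct rooted at $Y$, then repeated use of Remark \ref{lem:StabilityByContext}(a) lifts each covering to $S$ itself. So assume $S=Y(S_1,\ldots,S_n):{\bf H}$.

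The plan is to split off the pieces $Y_2^q, Y_2^{q-1},\ldots,Y_2^1$ one at a time, each by a single root split. Each such split is a covering by Definition \ref{cov}, since it has the form $S'\subsetdot_{V,U}S$ with $U>V$. We start from $Y=Y_1\cup Y_2^1\cup\cdots\cup Y_2^q$.

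\textbf{Step 1.} We first use hereditary orderedness. The components $H'_1<\cdots<H'_p$ of ${\bf H}\setminus Y_1$ are ordered. Each $H_i$ is a connected subset of ${\bf H}\setminus Y$, so it lies in exactly one $H'_{m}$. Reindexing the $S_i$ (they are unordered children), we may assume that the $H_i$ are listed so that those lying in a common $H'_m$ are consecutive, with the blocks arranged in increasing order of $m$. This is exactly the shape in the statement. The children grafted on $Y_2^l$ are the $S_i$ with $H_i\subseteq H'_{i_l}$, forming the block $j_l+1,\ldots,k_l$. The $S_i$ lying in components $H'_m$ with $m\notin J$ stay attached to $Y_1$ and fill the gaps between blocks.

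Next we check that $T'$ is a construct. We have ${\bf H},Y_1\leadsto H'_1,\ldots,H'_p$. For $m\in J$, the component $H'_{i_l}$ with $Y_2^l$ removed decomposes into exactly the $H_i$ contained in it. For $m\notin J$, $H'_m$ is itself some $H_i$, since it contains no vertex of $Y$.

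\textbf{Step 2.} We now perform the splits, peeling off the largest piece first. Let $Z_r:=Y_1\cup Y_2^1\cup\cdots\cup Y_2^r$. Since $Y_1<Y_2$ and the pieces $Y_2^l$ lie in ordered components, we have $Y_2^1<\cdots<Y_2^q$. Hence $Z_{r-1}<Y_2^r$, so the order condition holds.

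The key point is splittability of the root $Z_r$ as $(Z_{r-1},Y_2^r)$: we need $Y_2^r$ to lie in one connected component of ${\bf H}\setminus Z_{r-1}$. Removing the extra vertices $Y_2^1,\ldots,Y_2^{r-1}$ from ${\bf H}\setminus Y_1$ only affects the components $H'_{i_1},\ldots,H'_{i_{r-1}}$. It therefore leaves $H'_{i_r}$ intact as a component, and $Y_2^r\subseteq H'_{i_r}$ as required.

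We then track the effect of the split as given by \eqref{splitdef}. It grafts onto $Y_2^r$ exactly those current children of $Z_r$ lying in $H'_{i_r}$, which are the $S_i$ of block $r$. All other children remain on $Z_{r-1}$, including the already-formed subtrees $Y_2^{s}(\ldots)$ for $s>r$, which lie in other components. Lemma \ref{splitformally} guarantees that each intermediate tree is a construct.

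After $q$ splits we reach $T'$, giving $S\lessdot^+T$ provided $q\geq 1$, i.e. $Y_2\neq\emptyset$, which holds since $Y_1\cup Y_2$ is a partition.

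\textbf{Main obstacle.} The delicate step is the component bookkeeping in Step 2. We must verify that removing $Z_{r-1}$ leaves $H'_{i_r}$ as a single component, and that the children redistributed by \eqref{splitdef} match the blocks of Step 1. Both follow from the remark that $H'_{i_l}$ minus its vertices in $Y$ is the union of the $H_i$ inside it. That remark rests on the uniqueness of decompositions into connected components, argued as in Lemma \ref{twonodes}.
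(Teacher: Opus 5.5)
Your proof is correct and follows the same route as the paper: successive elementary root splits peel off $Y_2^q, Y_2^{q-1},\ldots,Y_2^1$ in decreasing order. Splittability of $Z_r$ as $(Z_{r-1},Y_2^r)$ is justified exactly as in the paper: $H'_{i_r}$ is disjoint from $Z_{r-1}$, so it survives as a connected component of ${\bf H}\setminus Z_{r-1}$, and one then tracks how the $S_i$ are redistributed by \eqref{splitdef}. Your write-up makes explicit some points the paper leaves implicit, namely the lift from the subconstruct at $Y$ to $S$ via Remark \ref{lem:StabilityByContext}(a), the direct check that $T'$ is a construct, and the order condition $Z_{r-1}<Y_2^r$; also, no reindexing of the $S_i$ is needed, since with $H_1<\cdots<H_n$ the block structure already follows from $H'_1<\cdots<H'_p$.
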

\begin{proof} The claim holds by successively splitting $Y$ along the pieces given by the decomposition $Y_2=Y_2^1< \cdots < Y_2^q$ (ordered thanks to the hereditarity assumption), and applying Lemma \ref{splitformally} in each step, as follows. By construction, $Y$ is $((Y\setminus Y_2^q),Y_2^q)$-splittable. (Indeed, setting
$Y^q=Y\setminus Y_2^q$, we have on one hand $Y^q<Y_2^q$, and on the other hand $H'_{i_q}\cap Y^q=\emptyset$, which entails that $H'_{i_q}$ remains a connected component of $\hyper{H}\setminus Y^q$, and we are done since $Y_2^q\inc H'_{i_q}$ by definition.) Therefore, by Lemma \ref{splitformally}, $S[Y^q(Y_2^q)/Y]:{\bf K}$. In turn, by the same argument, the node $Y^q$ of $S[Y^q(Y_2^q)/Y]$ is $((Y^q\setminus Y_2^{q-1}),Y_2^{q-1})$-splittable, etc. Repeating this procedure and tracking how the subconstructs $S_1,\dots, S_n$ distribute yields precisely $T$. Being obtained from $S$ through a sequence of elementary splits, for $T$ it holds that $S\lessdot^+ T$.
\end{proof}
\begin{definition} \label{squashing-notation}
In the setting of Lemma \ref{squashing-lemma}, we write $S\squash T$ and say that $T$ is obtained from $S$ by $(Y_1,Y_2)$-{\em squashing}, and that
the partition $Y_2^1 < \cdots < Y_2^q$ is the corresponding {\em disintegration} of $Y_2$. We denote it by $T=S[Y_1\left\langle Y_2 \right\rangle/Y]$.
\end{definition}
\begin{remark}
We note that $(Y_1,Y_2)$-splittability is a special case of squashability, when $q=1$ and $Y_2^1=Y_2$.
We also note that the definition of $T=S[Y_1\left\langle Y_2 \right\rangle/Y]$ makes sense even without assuming that ${\bf H}$ is ordered (and hence for any partition $Y=Y_1\cup Y_2$): indeed, it suffices to order the indices $i$ in the decomposition ${\bf H},Y\leadsto H_1,\dots,H_i,\ldots,H_n$ in such a way that 
in a planar representation of $T$ (as displayed in the statement of Lemma \ref{squashing-lemma}) these components ``appear'' from left to right.
\end{remark}
\begin{example} Consider the hypergraph ${\bf H}^\dag$ as in the second part of Example \ref{Exple27}. 
The $(\{1,2\},\{3,4\})$-squashing of the unique node of the single-node construct $S=\{1,2,3,4\}:{\bf H}^\dag$ produces the construct $\{1,2\}(3,4):{\bf H}^\dag$. The corresponding sequence of elementary splits is $\{1,2,3,4\}\lessdot \{1,2,3\}(4)\lessdot \{1,2\}(3,4)$.
\end{example}
We continue to develop the material needed for the comparison of the flip order $\leq_{\operatorname{flip}}$ with our generalised flip order $\leq$. We start by associating a multiset of natural numbers $\#(P)$ to any sequence
$P=(C_1\lessdot \cdots \lessdot C_n)$ as follows:
\begin{itemize}
\item if $n=1$, then $\#(P)$ is empty;
\item if $C_1\lessdot C_2$ is a split, then $\#(P)=\#(C_2\lessdot \cdots \lessdot C_n)$;
\item if $C_1\lessdot C_2$ is a fusion creating a node $U\cup V$, then $\#(P)=|U\cup V| +\#(C_2\lessdot \cdots \lessdot C_n)$ (here, $+$ stands for multiset addition).
\end{itemize}
We observe that, by construction, $\#(P)$ is a multiset of natural numbers that are greater than or equal to $2$: it is the multiset of cardinalities of vertices obtained by a fusion in the sequence. 

\begin{lemma}\label{claim1} If $P=(C_1\lessdot \cdots\lessdot C_n)$ is such that $C_n$ is a construction and if $Y=Y_1 \cup Y_2$, with $Y_1 <Y_2$, is a node of $C_1$, then there exists a sequence $P'=(C_1[Y_1\langle Y_2\rangle/Y]\lessdot \cdots\lessdot C_n)$ such that $\#(P')\leq\#(P)$ in the multiset ordering\footnote{We recall that the multiset order on finite multisets $M:X\rightarrow\mathbb{N}$, where $X$ is a partially ordered set, is generated by pairs $(N,M)$ where $N$ is obtained by removing an (occurrence of) an element $x$ of $M$ and replacing it by a multiset $Y$ of elements all strictly smaller than $x$, i.e., 
$N=(M\backslash\set{x})+Y$, where $+$ denotes addition of multisets. For an example, for $X=\set{a<b<c}$ we have $\set{a,a,b,b,b,c} < \set{a,b,c,c}$, with $x=c$ and $Y=\set{a,b,b}$.}. 
\end{lemma}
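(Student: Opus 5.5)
The plan is to argue by well-founded induction on the pair $(\#(P),n)$, ordered lexicographically: first by the multiset ordering on $\#(P)$, which is well founded because it is the multiset extension of the usual order on $\mathbb{N}$, then by the length $n$. The base case is $n=1$, where $C_1$ is a construction. Then $|Y|=1$ and no partition $Y=Y_1\cup Y_2$ with both parts non-empty exists, so the statement is vacuous.

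In the inductive step I would inspect the first covering $C_1\lessdot C_2$ and ask how it interacts with the node $Y$. There are three cases.

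\emph{(i) The covering does not touch $Y$.} That is, it is a fusion or split of nodes other than $Y$, and it does not contract an edge incident to $Y$. I would check that squashing commutes with such a covering: $C_1[Y_1\langle Y_2\rangle/Y]\lessdot C_2[Y_1\langle Y_2\rangle/Y]$, and the covering is of the same kind. A fusion creates a node of the same cardinality, so the contribution to $\#$ is unchanged. This commutation should follow from Remark \ref{lem:StabilityByContext}, since the subconstructs $S_i$ hanging under $Y$ are only redistributed by the squashing, not altered. I then apply the induction hypothesis to the shorter tail $C_2\lessdot\cdots\lessdot C_n$ with the same node $Y$, and prepend the commuted step.

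\emph{(ii) The covering splits $Y$ as $U(V)$ with $U<V$.} Since both $Y=Y_1\cup Y_2$ and $Y=U\cup V$ are cuts of $Y$ by the ambient order, either $U\supseteq Y_1$ or $U\subseteq Y_1$. If $U=Y_1$, the split is one of the pieces of the squashing and the claim reduces to case (i). If $U\supsetneq Y_1$, I would apply the induction hypothesis to $C_2$ at the node $U$ with partition $(Y_1,U\setminus Y_1)$. I then need to check that the resulting construct coincides with, or is reachable by elementary splits from, $C_1[Y_1\langle Y_2\rangle/Y]$. The point is that the pieces of $V$ and of $U\setminus Y_1$ together form the disintegration of $Y_2$. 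If $U\subsetneq Y_1$, the symmetric argument applies, using the node $V$ of $C_2$ together with the remaining part $Y_1\setminus U$. Splits add nothing to $\#$, so we get $\#(P')\leq\#(C_2\lessdot\cdots\lessdot C_n)=\#(P)$.

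\emph{(iii) The covering is a fusion merging $Y$ with an adjacent node $Z$ into $Y\cup Z$.} Here $|Y\cup Z|$ is an element of $\#(P)$. The idea is to bypass this fusion. I would define a partition of $Y\cup Z$ in $C_2$, compatible with the order condition, such that squashing $C_2$ along it yields a construct reachable from $C_1[Y_1\langle Y_2\rangle/Y]$ by splits and by fusions of nodes of size strictly smaller than $|Y\cup Z|$. Concretely, I would fuse $Z$ only with the relevant piece $Y_1$ or $Y_2^l$ of the disintegration instead of with all of $Y$. I would then invoke the induction hypothesis on the tail starting at $C_2$ and concatenate. In the multiset, the occurrence $|Y\cup Z|$ is replaced by strictly smaller cardinalities, which gives $\#(P')\leq\#(P)$.

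I expect case (iii), together with the mixed sub-cases of (ii), to be the main obstacle. The difficulty is verifying that the redistributed subconstructs match exactly, using Lemma \ref{twonodes}-style bookkeeping of connected components, and that each replacement fusion really satisfies the order condition. My first attempt there would be to handle separately whether $Z$ is the parent of $Y$ or one of its children, since the order condition $U>V$ fixes on which side of $Y_1<Y_2$ the node $Z$ must lie.
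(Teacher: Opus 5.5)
Your overall architecture is the paper's: case on the first covering, commute it past the squash or bypass it, and apply the induction hypothesis to the tail $C_2\lessdot\cdots\lessdot C_n$ at a suitably squashed node. Your cases (i) and (ii) match the paper's cases 1 and 4. Two minor points there: when $U=Y_1$ nothing ``reduces to (i)'', because $(Y_1,Y_2)$-splittability forces a trivial disintegration and so $C_1[Y_1\langle Y_2\rangle/Y]=C_2$ outright; and plain induction on $n$ suffices, since the hypothesis is only ever applied to the tail.

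The genuine gap is case (iii). To use the hypothesis you must land exactly on $C_2[W_1\langle W_2\rangle/(Y\cup Z)]$ with $W_1<W_2$, so $W_1$ must be an initial segment of $Y\cup Z$. A single fusion of $Z$ with ``the relevant piece'' of $D:=C_1[Y_1\langle Y_2\rangle/Y]$ achieves this only when $Z$ is a child of $Y$ whose subtree hangs under $Y_1$ in $D$. The other situations fail:
\begin{itemize}
\item If $Z$ is a child of $Y$ hanging under $Y_2^1$ (the only other possibility, by hereditary ordering, since $Z<Y_1<Y_2$), fusing makes $Y_1$ the parent of $Y_2^1\cup Z$. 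But $\min(Y\cup Z)\in Z$ must lie in $W_1$, so this is not an admissible squash.
\item If $Z$ is the parent of $Y$, fusing $Z$ with $Y_1$ makes $Z\cup Y_1$ the parent of the blocks of $Y_2$. That would require $Z\cup Y_1<Y_2$, which is false since $Z>Y_2$.
\end{itemize}

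What is missing is a fuse--re-squash--fuse detour aimed at one of two specific partitions.
\begin{itemize}
\item In the child case the target is $(Y_1\cup Z,Y_2)$. After fusing $Y_2^1$ with $Z$, re-squash $Y_2^1\cup Z$ as $Z\langle Y_2^1\rangle$, then fuse $Y_1$ with $Z$. This yields $C_2[(Y_1\cup Z)\langle Y_2\rangle/(Y\cup Z)]$ and replaces the occurrence $|Y\cup Z|$ by $|Y_2^1\cup Z|$ and $|Y_1\cup Z|$.
\item In the parent case the target is $(Y_1,Y_2\cup Z)$. After fusing $Z$ with $Y_1$, re-squash $Z\cup Y_1$ as $Y_1\langle Z\rangle$, with disintegration $Z^1<\cdots<Z^r$. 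The blocks of $Y_2$ that now fall into the component of $Z^1$ form a possibly empty final segment $Y_2^j,\ldots,Y_2^q$, all children of $Z^1$. Fuse them into $Z^1$ one at a time to reach $C_2[Y_1\langle Y_2\cup Z\rangle/(Y\cup Z)]$.
\end{itemize}
Every new fused node misses $Y_1$ or $Y_2$, hence has size $<|Y\cup Z|$.

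These placement facts, and the fact that the intermediate re-squashes can be realised by elementary splits, both come from the hereditarily ordered hypothesis. Your proposal never invokes it.
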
 
\begin{proof}
 The proof goes by induction on the length $n$ of $P$. 

\smallskip

For the base case, given by $n=2$, since
 $C_2$ is a construction, the unique covering contained in $P$ must be a split. More precisely, it must hold that $C_2=C_1[Y_1(Y_2)/Y]$ for singletons $Y_1$ and $Y_2$. Therefore, the only possible squashing of $C_1$ is precisely the $(Y_1,Y_2)$-splitting leading to $C_2$. So, we take $P'=C_1[Y_1(Y_2)/Y]$ and we see easily that $\emptyset= \#(P')\leq \#(P)=\emptyset$.

\smallskip

In order to address the inductive step, we distinguish four cases determined by the nature of the first covering $C_1\lessdot C_2$ of $P$.
The reader may want to illustrate the different cases with pictures such as Figure \ref{illustrative-picture} below.
\begin{enumerate}
\item If the covering $C_1\lessdot C_2$ does not involve $Y$, then 
\begin{equation}C_1[Y_1\langle Y_2 \rangle /Y]\lessdot C_2[Y_1\langle Y_2\rangle /Y]\label{we0}\end{equation} is a valid sequence of coverings and $\#(\eqref{we0})=\#(C_1\lessdot C_2)$. By induction applied to the sequence $Q=(C_2\lessdot \cdots \lessdot C_n)$ with respect to the node $Y=Y_1<Y_2$ of $C_2$, we get a sequence $Q'=(C_2[Y_1\langle Y_2 \rangle /Y]\lessdot \cdots \lessdot C_m)$ such that $\#(Q')\leq\#(Q)$. We define $P'$ by prefixing $Q'$ with \eqref{we0}; it follows immediately that $\#(P')\leq\#(P)$.
\item If $C_1\lessdot C_2$ is a $(Y,Z)$-fusion, then we have $Z<Y_1<Y_2$. We distinguish two cases according to the position of $Z$ in $C_1[Y_1\langle Y_2 \rangle /Y]$.
\begin{enumerate}
\item If $Z$ is a child of $Y_1$, then \begin{equation} 
C_1[Y_1\langle Y_2 \rangle /Y]\lessdot C_1[Y_1\langle Y_2 \rangle /Y][Y_1\cup Z/Y_1(Z)] 
\label{we1}\end{equation}
is a valid sequence of coverings, and, since $|Y_1\cup Z|<|Y\cup Z|$, we have that $\#(\eqref{we1})<\#(C_1\lessdot C_2)$.
 By induction applied to the sequence $Q=(C_2\lessdot \cdots \lessdot C_n)$ with respect to the node $Y\cup Z=(Y_1\cup Z)<Y_2$ of $C_2$, we get a sequence $Q'=(C_2[(Y_1\cup Z)\langle Y_2\rangle/(Y\cup Z)]\lessdot \cdots \lessdot C_m)$ such that $\#(Q')\leq\#(Q)$. Now, observing that $$C_1[Y_1\langle Y_2 \rangle /Y][Y_1\cup Z/Y_1(Z)]=C_2[(Y_1\cup Z)\langle Y_2\rangle/(Y\cup Z)],$$

we define the desired sequence $P'$ by prefixing $Q'$ with \eqref{we1}. The inequality $\#(P')<\#(P)$ follows immediately.

\item If $Y_2^1<\cdots <Y_2^q$ is the disintegration of $Y_2$ in $D:=C_1[Y_1\langle Y_2 \rangle /Y]$ and if $Z$ is a child of $Y_2^1$, then
\begin{equation}
D \lessdot^+ D[(Y_2^1\!\cup\! Z)/Y_2^1(Z)][Z\langle Y_2^1\rangle/(Y_2^1\!\cup\! Z)][(Y_1\!\cup\! Z)/Y_1(Z)]
\end{equation}\label{we2}
is a valid sequence of coverings. In addition, since $|Y_2^1\cup Z|<|Y\cup Z|$ and $|Y_1\cup Z|<|Y\cup Z|$ it holds that $\#(\eqref{we2})<\#(C_1\lessdot C_2)$. 
 We conclude again by induction, analogously as in the previous case, noticing that the last construct in \eqref{we2} is precisely $C_2[(Z\cup Y_1)\langle Y_2\rangle/(Y\cup Z)]$.
\end{enumerate}
\item If $C_1\lessdot C_2$ is a $(Z,Y)$-fusion, then it holds that $Y_1<Y_2<Z$ and
\begin{equation} 
 C_1[Y_1\langle Y_2 \rangle /Y] \lessdot^+ C_1[Y_1\langle Y_2 \rangle /Y][(Z\cup Y_1)/Z(Y_1)][Y_1\langle Z\rangle / (Z\cup Y_1)] \label{we3}
\end{equation}
is a valid sequence of coverings. Moreover, since $|Z\cup Y_1|<|Z\cup Y|$, we have that $\#(\eqref{we3})<\#(C_1\lessdot C_2)$. Denote by $S$ the right-hand side of \eqref{we3} and let $Y_2^1<\cdots <Y_2^q$ and $Z^1<\cdots <Z^r$ be the corresponding disintegrations of $Y_2$ and $Z$, respectively. We distinguish two cases.
\begin{enumerate}
\item If each node $Y_2^l$, $1\leq l\leq q$, is a child of $Y_1$ in $S$, then $$S=C_2[Y_1\langle Y_2\cup Z \rangle/(Y\cup Z)],$$ and we conclude again by induction.
\item Suppose that for some $1\leq j\leq q$, each node $Y_2^l$ for $j\leq l\leq q$ is a child of $Z^1$ in $S$. Then we continue the sequence \eqref{we3} by performing the $(Z^1,Y_2^j)$-fusion, followed by the $((Z^1\cup Y_2^j),Y_2^{j+1})$-fusion, etc., until the last node $Y_2^q$ gets fused with $Z^1$, leading to 
\begin{equation}
 S\lessdot^+ S[(Z^1\cup Y_2^j)/Z^1(Y_2^j)]\cdots[Z^1\cup\!\!\bigcup_{j\leq l\leq q}Y_2^l/ (Z^1\cup\!\!\bigcup_{j\leq l\leq q-1}Y_2^l)(Y_2^{q})].
 \label{we4}
\end{equation}
Denoting with $T$ the right-hand side of \eqref{we4}, we have that 
$$T=C_2[Y_1\langle Y_2\cup Z\rangle/(Y\cup Z)],$$
 and we define $P'$ by concatenating \eqref{we3}, \eqref{we4} and $Q'$, where $Q'$ is obtained by induction like before. 
\end{enumerate}
\begin{figure}[H]
\centering
\resizebox{11cm}{!}{\begin{tikzpicture}
\node (Z) [inner sep=0.6mm] at (0,-0.1) {\footnotesize $Z$}; 
\node (Y1)[inner sep=0.6mm] at (0,0.75) {\footnotesize $Y_1$}; 
\node (Y21)[inner sep=0.6mm] at (-0.5,1.6) {\footnotesize $Y^1_2$}; 
\node (Y2q)[inner sep=0.6mm] at (0.5,1.6) {\footnotesize $Y^q_2$};
\node (do)[inner sep=0.6mm] at (0,1.6) {\scriptsize $\cdots$};
\draw[thick] (Z)--(Y1); \draw[thick] (Y21)--(Y1)--(Y2q);
\end{tikzpicture} 
\raisebox{0.8cm}{$\lessdot$} \raisebox{0.4cm}{\begin{tikzpicture}
\node (Z) [inner sep=0.6mm] at (0,0.65) {\footnotesize $Z\cup Y_1$}; 
\node (Y21)[inner sep=0.6mm] at (-0.5,1.5) {\footnotesize $Y^1_2$}; 
\node (Y2q)[inner sep=0.6mm] at (0.5,1.5) {\footnotesize $Y^q_2$};
\node (do)[inner sep=0.6mm] at (0,1.5) {\scriptsize $\cdots$};
 \draw[thick] (Y21)--(Z)--(Y2q);
\end{tikzpicture}} \raisebox{0.8cm}{$\lessdot^{+}$} \begin{tikzpicture}
\node (Z) [inner sep=0.6mm] at (0,-0.1) {\footnotesize $Y_1$}; 
\node (Y21)[inner sep=0.6mm] at (-1.3,0.75) {\footnotesize $Y^1_2$}; 
\node (Y2q)[inner sep=0.1mm] at (-0.35,0.75) {\footnotesize $Y^{j-1}_2$};
\node (do)[inner sep=0.6mm] at (-0.9,0.75) {\scriptsize $\cdots$};
\node (Z1) [inner sep=0.6mm] at (0.35,0.75) {\footnotesize $Z^1$}; 
\node (Y2j)[inner sep=0.6mm] at (-0.1,1.5) {\footnotesize $Y^j_2$}; 
\node (Y2q')[inner sep=0.1mm] at (0.8,1.5) {\footnotesize $Y^{q}_2$};
\node (do1)[inner sep=0.6mm] at (0.35,1.5) {\scriptsize $\cdots$};
\node (Zr) [inner sep=0.6mm] at (1.3,0.75) {\footnotesize $Z^r$}; 
\node (do1)[inner sep=0.6mm] at (0.85,0.75) {\scriptsize $\cdots$};
\draw[thick] (Z1)-- (Z)--(Zr); \draw[thick] (Y21)--(Z)--(Y2q); \draw[thick] (Y2j)-- (Z1)--(Y2q');
\end{tikzpicture} \raisebox{0.8cm}{$\lessdot^{+}$} \begin{tikzpicture}
\node (Z) [rectangle, inner sep=0.6mm] at (0,-0.1) {\footnotesize $Y_1$}; 
\node (Y21)[inner sep=0.6mm] at (-2.3,0.75) {\footnotesize $Y^1_2$}; 
\node (Y2q)[inner sep=0.1mm] at (-1.25,0.75) {\footnotesize $Y^{j-1}_2$};
\node (do)[inner sep=0.6mm] at (-1.825,0.75) {\scriptsize $\cdots$};
\node (Z1) [inner sep=0.6mm] at (0.35,0.75) {\footnotesize $Z^1\!\cup \! Y^j_2 \!\cup\!\cdots\! \cup\! Y_2^q$}; 
\node (Zr) [inner sep=0.6mm] at (2.3,0.75) {\footnotesize $Z^r$}; 
\node (do1)[inner sep=0.6mm] at (1.85,0.75) {\scriptsize $\cdots$};
\draw[thick] (Z1)-- (Z)--(Zr); \draw[thick] (Y21)--(Z)--(Y2q); 
\end{tikzpicture}} 
\caption{Case 3.(b). }
\label{illustrative-picture}
\end{figure}
\item If $C_1\lessdot C_2$ is the $(U_1,U_2)$-splitting of $Y$, then there are three subcases.
\begin{enumerate}
\item If $U_1=Y_1$ and $U_2=Y_2$, then $ C_1[Y_1\langle Y_2 \rangle /Y]=C_2$ and the conclusion is immediate.
\item If $U_1=Y_1\cup Z$ and $Y_2=Z\cup U_2$, with $Y_1<Z<U_2$, then since $U_2\inc Y_2$ and $\max(U_2)=\max(Y_2)$, we have that $U_2$ is a subset of the last block $Y_2^q$ of the disintegration of 
$Y_2$. There are again two cases.
\begin{enumerate}
\item If $U_2=Y_2^q$, then $C_1[Y_1\langle Y_2\rangle/Y]=C_2[Y_1\langle Z\rangle /U_1]$ and we conclude by induction.
\item If $Y_2^q=V\cup U_2$, with $V<U_2$, then one checks easily that $(U_1,U_2)$-splittability of $Y$ in $C_1$ entails $(V,U_2)$-splittability of $Y_2^q$ in $C_1[Y_1\langle Y_2\rangle/Y]$. We get that $C_1[Y_1\langle Y_2\rangle/Y][V(U_2)/Y_2^q]=C_2[Y_1\langle Z\rangle /U_1]$ and we conclude likewise by induction. 
\end{enumerate}
\item If $Y_1=U_1\cup Z$ and $U_2=Z\cup Y_2$, with $Y_1<Z<U_2$, we note similarly as in the previous case that $Y_1$ is $(U_1,Z)$-splittable in $Y_1$, and that the construct $T'_2$ obtained from $T'_1$ by this splitting is the $(Z,Y_2)$-squashing of $U_2$ in $T_2$. We conclude by induction.
\end{enumerate}
 \end{enumerate} 
This concludes the proof of the lemma.
\end{proof}

\begin{lemma}\label{claim2} For every $P=(C_1\lessdot \cdots\lessdot C_n)$ such that $C_1$ and $C_n$ are constructions and $\#(P)$ contains at least one occurrence of $p>2$, there exists a sequence $P'$ parallel to $P$ (i.e., starting from $C_1$ and ending in $C_n$) such that $\#(P')<\#(P)$ in the multiset ordering.
\end{lemma}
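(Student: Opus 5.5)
Since $\#(P)$ contains an element $p>2$, I would locate a fusion in $P$ that creates a node of cardinality $p\geq 3$, and then use Lemma \ref{claim1} to replace the tail of $P$ after that fusion. The first step is to pick this fusion well: among all fusions of $P$ producing a node of cardinality $>2$, I take the \emph{last} one, say $C_k\lessdot C_{k+1}$, a $(U,V)$-fusion creating $W=U\cup V$ with $|W|=p\geq 3$. The tail $Q=(C_{k+1}\lessdot\cdots\lessdot C_n)$ ends at the construction $C_n$, and $W$ is a node of $C_{k+1}$.

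Next, I choose a partition $W=W_1\cup W_2$ with $W_1<W_2$ so that the $(W_1,W_2)$-squashing of $C_k$'s image is reachable from $C_k$ more cheaply than the original fusion. Since $|W|\geq 3$, at least one of $U,V$ has at least two elements (note $V<U$). Suppose $|U|\geq 2$. Then I split $U$ first, as $U=(U\setminus\max U)\sqcup\{\max U\}$, which is always possible. This produces a child $\{\max U\}$ above $U'=U\setminus\max U$, and I then fuse $U'$ with $V$ (legal since $V<U'$). Alternatively, I apply Lemma \ref{claim1} to $Q$ with respect to the node $W$ and the partition $W_1=W\setminus\{\max W\}$, $W_2=\{\max W\}$. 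This yields a sequence $Q'$ from $C_{k+1}[W_1\langle W_2\rangle/W]=C_{k+1}[W_1(W_2)/W]$ to $C_n$ with $\#(Q')\leq\#(Q)$.

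I then connect $C_k$ to $C_{k+1}[W_1(W_2)/W]$ by a sequence whose fusions all have cardinality $<p$. The case analysis is the following.
\begin{itemize}
\item If $\max W\in U$ and $|U|\geq 2$, I first split $U$ into $U\setminus\{\max U\}$ and $\{\max U\}$, then fuse $V$ into $U\setminus\{\max U\}$. The new fusion has cardinality $p-1$. I must check that the arrangement of subtrees agrees with that of the split construct, as in case 2(a)--(b) of Lemma \ref{claim1}.
\item If $U=\{u\}$ is a singleton, then $|V|\geq 2$ and $\max W=u$. The target is $(V\cup\{u\}\setminus\{u\})(u)$-shaped, namely $V$ with $u$ as a child, rearranged. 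Here I instead take the partition $W_1=\{u\}\cup (V\setminus\max V)$ and $W_2=\{\max V\}$. This is not of the form $W_1<W_2$ in general, so I switch strategies: I split $V$ off its maximum first, then fuse $u$ with $V\setminus\max V$, producing a node of size $p-1$. I then need to reach a squashing of $W$ compatible with Lemma \ref{claim1}, using the squashing with the disintegration formalism to absorb the component rearrangement.
\end{itemize}
The resulting $P'$ is $C_1\lessdot\cdots\lessdot C_k$, followed by the bridge, followed by $Q'$. Its multiset replaces the occurrence $p$ by elements of size $<p$, plus $\#(Q')\leq\#(Q)$. Hence $\#(P')<\#(P)$.

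The main obstacle is the bridging step. I must show that the constructs produced by "split first, then fuse smaller" coincide exactly with the squashing $C_{k+1}[W_1\langle W_2\rangle/W]$, including how the subconstructs hanging from $U$ and $V$ redistribute. I must also handle the case where the natural partition does not satisfy $W_1<W_2$. For this I would mimic cases 2 and 3 of the proof of Lemma \ref{claim1}, using Lemma \ref{splitformally} for validity and a careful, Figure \ref{illustrative-picture}-style tracking of children. If that direct bridging fails, the fallback is induction on the position of the large fusion.
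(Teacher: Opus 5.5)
Your architecture matches the paper's: replace the large fusion by a bridge of strictly smaller fusions from $C_k$ to some ordered squash of $C_{k+1}$ at $W$, then apply Lemma \ref{claim1} to the tail. The bridge, however, is the whole content of the lemma. In the case $U=\{u\}$ you never name a squash that is both ordered and reachable. The partition $(V,\{u\})$ given by your general rule is in general unreachable without a fusion of size $\geq p$.

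For example, in the permutohedron on $1<2<3$ take $P=3(2(1))\lessdot 3(\{1,2\})\lessdot\{1,2,3\}\lessdot\{1,2\}(3)\lessdot 1(2(3))$. From $C_k=3(\{1,2\})$, the only constructs reachable by coverings whose fusions all have size $2$ are $3(1(2))$, $\{1,3\}(2)$, $1(3(2))$, $1(\{2,3\})$ and $1(2(3))$. So your target $\{1,2\}(3)$ is out of reach.

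Your fallback, ``split $V$, then fuse $u$ with $V'$'' (where $V'=V\setminus\{v\}$, $v=\max V$), lands on the node $\{u\}\cup V'$ with $v$ above it. This corresponds to the partition $(\{u\}\cup V',\{v\})$, which violates $W_1<W_2$, so Lemma \ref{claim1} does not apply to it.

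The missing idea is to put the lower piece of the child at the bottom. Take $W_1=V'$ and $W_2=\{u,v\}$. After fusing $u$ with $V'$ (size $p-1$), re-split $\{u\}\cup V'$ as $V'(u)$; this is legal because $\{u\}$ is a singleton and $V'<\{u\}$. Then there are two possibilities:
\begin{itemize}
\item $v$ hangs under $V'$, and you are exactly at $C_{k+1}[V'\langle\{u,v\}\rangle/W]$;
\item $v$ hangs under $u$, and one more fusion of $u$ with $v$ (size $2<p$) lands you there.
\end{itemize}
In the example this gives $3(\{1,2\})\lessdot 3(1(2))\lessdot\{1,3\}(2)\lessdot 1(3(2))\lessdot 1(\{2,3\})=C_{k+1}[1\langle\{2,3\}\rangle/\{1,2,3\}]$. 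This is the paper's case 1 ($|Y|>1$). There it is done for an arbitrary parent $X$ and an arbitrary splitting $(Y_1,Y_2)$ of the child, with $Y_2$ landing either under $Y_1$ or under the first block $X^1$ of the disintegration of $X$.

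Your case $|U|\geq 2$ also has a hole you did not flag. After splitting $U$ as $U'(\{u\})$ with $u=\max U$, the child $V$ may end up attached to $\{u\}$ rather than to $U'$. For instance, $\{3,4\}(\{1,2\})$ in the permutohedron on $\{1,\ldots,4\}$ splits to $3(4(\{1,2\}))$. Then ``fuse $V$ into $U'$'' is not a covering. The repair is as in the paper's case 2(a):
\begin{itemize}
\item fuse $\{u\}$ with $V$ (size $|V|+1<p$);
\item re-split the result as $V(u)$;
\item fuse $U'$ with $V$ (size $p-1$).
\end{itemize}
One checks, by comparing components of $\hyper{H}\setminus U'$ and $\hyper{H}\setminus(U'\cup V)$, that this lands on $C_{k+1}[(W\setminus\{u\})(u)/W]$.

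With this addition, your target $(W\setminus\{\max W\},\{\max W\})$ does work when $|U|\geq 2$. When $V$ sits under $U'$, it is slightly more economical than the paper's case 2(b). That case re-squashes $X_1\cup Y$ as $Y\langle X_1\rangle$ and in one subcase needs Lemma \ref{claim1} twice.
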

\begin{proof} 
Since $\#(P)$ contains an occurrence of $p>2$, there exists an $1\leq m\leq n-1$, such that the covering $C_m \lessdot C_{m+1}$ is an $(X,Y)$-fusion, with $|X\cup Y|>2$. Let us introduce the notation $P^l:=(C_1\lessdot \cdots\lessdot C_m)$ and $P^r:=(C_{m+1}\lessdot \cdots\lessdot C_n)$. We note that $\#(P^r)<\#(P)$. Now, since $|X\cup Y|>2$, we have either $|X|>1$ or $|Y|>1$.
\begin{enumerate}
\item If $|Y|>1$, then $Y$ is $(Y_1,Y_2)$-splittable for some pair $(Y_1,Y_2)$; in addition, since $C_m \lessdot C_{m+1}$ was an $(X,Y)$-fusion, we have that $Y_1<Y_2<X$. Consider the following sequence of coverings:
\begin{equation}
C_m \lessdot^+ C_m[Y_1(Y_2)/Y][(X\cup Y_1)/X(Y_1)][Y_1\langle X \rangle/(X\cup Y_1)].
\label{we10}
\end{equation}
Denote by $S$ the right-hand side of \eqref{we10} and let $X^1<\cdots <X^q$ be the corresponding disintegration of $X$. We distinguish two cases according to the position of $Y_2$ in $S$.
\begin{enumerate}
\item If $Y_1$ is the parent of $Y_2$, then $S=C_{m+1}[Y_1\langle X\cup Y_2\rangle/(X\cup Y)]$. By Lemma \ref{claim1}, we get a sequence $\widetilde{P^r}=(S\lessdot \cdots \lessdot C_m)$ such that
$\#(\widetilde{P^r})\leq \#(P^r)$. We define $\widetilde{P}$ to be the concatenation of $P^l$, \eqref{we10} and $\widetilde{P^r}$. Noticing that
$\#\eqref{we10} < \#(C_m\lessdot C_{m+1})$, we verify
$$\begin{array}{lllll}
\#(\widetilde{P}) & = &\#(P^l)+\#\eqref{we10}+\#(\widetilde{P^r}) &&\\
& < & \#(P^l)+\#(C_m\lessdot C_{m+1})+\#(P^r) &=&\#(P).
\end{array}$$

\item Otherwise, $Y_2$ is a child of $X^1$. Then we continue \eqref{we10} as follows: 
\begin{equation}
S \lessdot^+ S[(X^1\cup Y_2)/X_1(Y_2)][Y_2\langle X^1\rangle/(X^1\cup Y_2)].\label{we11}
\end{equation}
Since $S[(X^1\cup Y_2)/X_1(Y_2)]=C_{m+1}[Y_1 \langle X\cup Y_2\rangle /(X\cup Y)]$, denoting by $S'$ the right-hand side construct of \eqref{we11}, we have that $$S'=C_{m+1}[Y_1 \langle X\cup Y_2\rangle /(X\cup Y)][Y_2\langle X^1\rangle/(X^1\cup Y_2)].$$ By applying Lemma \ref{claim1} twice, we get
a sequence $\widetilde{P^r}=(S'\lessdot \cdots\lessdot C_m)$ such that $\#(\widetilde{P^r})\leq\#(P^r)$. We define $\widetilde{P}$ to be the concatenation of $P^l$, \eqref{we10}, \eqref{we11} and $\widetilde{P^r}$, and we conclude as in the previous case.
\end{enumerate}
\item If $|X|>1$, then $X$ is $(X_1,X_2)$-splittable for some pair $(X_1,X_2)$, and we have $Y<X_1<X_2$. Consider the position of $Y$ in $D:=C_m[X_1(X_2)/X]$.
\begin{enumerate}
\item If $Y$ is a child of $X_2$, then 
\begin{equation}
C_m \lessdot^+ D[(X_2\!\cup\! Y)/X_2(Y)][Y\langle X_2\rangle / (X_2\!\cup\! Y)][(X_1\!\cup \!Y)/X_1(Y)].
\label{we12}
\end{equation}

Denoting by $S$ the right-hand side construct of \eqref{we12}, we have that $S=C_{m+1}[(X_1\cup Y)\langle X_2\rangle/(X\cup Y)]$ and we conclude analogously as before. 
\item If $Y$ is a child of $X_1$, then 
\begin{equation}
C_m \lessdot^+ C_m[X_1(X_2)/X][(X_1\cup Y)/X_1(Y)][Y\langle X_1\rangle / (X_1\cup Y)].\label{we13}
\end{equation}
Denoting by $S$ the right-hand side construct of \eqref{we13} and the corresponding disintegration of $X_1$ by $X_1^1<\cdots <X_1^q$, we distinguish two cases according to the position of $X_2$ in $S$.
\begin{enumerate}
\item If $X_2$ is a child of $Y$ in $S$, then $$S=C_{m+1}[Y\langle X\rangle / (X\cup Y)],$$, and we conclude by Lemma \ref{claim1}, as in the case 1(a).
\item If $X_2$ is a child of $X_1^q$, then $$S=C_{m+1}[Y\langle X\rangle / (X\cup Y)][X_1^q(X_2)/(X_1^q\cup X_2)],$$ and we conclude by referring to Lemma \ref{claim1} twice, as in the case 1(b). 
\end{enumerate}
\end{enumerate}
\end{enumerate}
This concludes the proof of the lemma.
\end{proof}

\begin{proposition} \label{GFO-FO}
Let $\hyper{H}$ be an ordered hypergraph and let $S,T:\hyper{H}$ be two constructions.
The following two properties hold:
\begin{enumerate}
\item If $S\lessdot_{\operatorname{flip}} T$, then $S\lessdot^+ T$.
\item If $\hyper{H}$ is hereditarily ordered and if $S\lessdot^+ T$, then $S \lessdot_{\operatorname{flip}}^+ T$.
\end{enumerate} 
\end{proposition}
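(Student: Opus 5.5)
Part 1 is immediate from Lemma~\ref{diamond-lemma}. If $S\lessdot_{\operatorname{flip}}T$, then $T=\mathsf{Flip}_S(x,y)$ with $\{x\}$ the parent of $\{y\}$ and $y<x$. Lemma~\ref{diamond-lemma} gives $T=S[\{x,y\}/x(y)][y(x)/\{x,y\}]$. The first step is an $(\{x\},\{y\})$-fusion, which is a covering since $x>y$. The second step is a split of $\{x,y\}$ into $U=\{y\}<V=\{x\}$, which is a covering since $V$ is a singleton. Hence $S\lessdot S[\{x,y\}/x(y)]\lessdot T$, so $S\lessdot^+T$.

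For Part 2, I would take a chain $P=(S=C_1\lessdot\cdots\lessdot C_n=T)$ between the constructions $S,T$ and argue by well-founded induction on $\#(P)$ in the multiset ordering. Lemma~\ref{claim2} requires $\mathbf H$ to be hereditarily ordered, through Lemma~\ref{claim1} and the squashing of Lemma~\ref{squashing-lemma}. If $\#(P)$ contains some $p>2$, that lemma replaces $P$ by a parallel chain $P'$ with $\#(P')<\#(P)$. The multiset order on finite multisets of naturals is well-founded, so after finitely many steps we reach a chain $P$ from $S$ to $T$ whose fusions all create nodes of size exactly $2$.

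It remains to show that such a chain factors into flips. Every fusion in $P$ creates a $2$-element node $\{x,y\}$ from a parent $x$ and a child $y<x$. The only split of a $2$-element node is $\{y\}(\{x\})$, since the order condition forces the smaller element to be the parent. Only singleton nodes can be fused, because a fusion involving a node of size $\geq2$ would create a node of size $>2$. Therefore the only non-singleton nodes ever present are these pairs, and $C_1$ and $C_n$ contain none of them. Each pair $\{x,y\}$ is created by one fusion and later destroyed by the split $y(x)$. Between these two moments, no covering can touch $\{x,y\}$ itself.

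The main obstacle is reordering the coverings so that each such fusion is immediately followed by its split. The coverings occurring in between act on other nodes of the tree, and I would show that they commute with the fusion/split pair. For this I would use Remark~\ref{lem:StabilityByContext} and the fact that contracting an edge and splitting a node that does not involve $\{x,y\}$ produce the same trees in either order. The one delicate interaction is a split or fusion of the parent or of a child of $\{x,y\}$. Since those are singletons, they can only take part in other size-$2$ fusions, and such a fusion would have to involve $\{x,y\}$ itself, which is impossible. So the pending pair can be moved forward until it is adjacent. After this reordering, each adjacent fusion/split pair is exactly $\mathsf{Flip}(x,y)$ by Lemma~\ref{diamond-lemma}, which gives $S\lessdot_{\operatorname{flip}}^+T$.
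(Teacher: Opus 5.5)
Your overall route is the paper's. Part~1 goes through Lemma~\ref{diamond-lemma}. In Part~2 you apply Lemma~\ref{claim2} repeatedly, using well-foundedness of the multiset order, until $\#(P)=\{2,\ldots,2\}$. You then slide the split of each pair $\{x,y\}$ back next to the fusion that created it. The paper makes this precise by treating the first covering, which must be a fusion since $C_1$ is a construction, and then recursing on the rest of the chain, which again starts at a construction. You should do the same.

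The justification you give for the commutation step is wrong as written. The neighbours of the pending node $Z=\{x,y\}$ need not be singletons: another pair created by a fusion can be the parent or a child of $Z$ and can be split inside the window. A singleton neighbour can also take part in a size-$2$ fusion that does not involve $Z$. For example, the parent $p$ of $Z$ can be fused with its own parent $q>p$, or with a sibling of $Z$, and a child $c$ of $Z$ with one of its own children. So these interactions are not impossible. What you must show is that they commute with the split $Z\mapsto y(x)$.

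They do commute, for the following reason. The split $Z\mapsto y(x)$ places each child subtree of $Z$ under $x$ or under $y$ according to one test: whether its vertex set lies in the connected component of $x$ in the restriction of $\mathbf{H}$ to the vertex set of the subtree rooted at $Z$, with $y$ removed.
\begin{itemize}
\item A covering that does not involve the node $Z$ itself may relabel children of $Z$; for instance $c$ becomes $\{c,g\}$, or a child pair is split.
\item Such a covering leaves unchanged the vertex set of the subtree rooted at $Z$, the number of children of $Z$, and the vertex set of each child subtree.
\item Hence it produces the same distribution whichever order the two moves are made in.
\item Every parent--child edge not incident to $Z$ survives the split, and the edge into $Z$ becomes an edge into $y$. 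So the intermediate fusions remain valid coverings.
\item Splits of pairs are always valid, because the child part is a singleton.
\end{itemize}
With this argument in place of your last paragraph's, the proof is complete and coincides with the paper's.
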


\begin{proof}
The first property is immediate: if $S\lessdot_{\operatorname{flip}} {\mathsf{Flip}}_S(x,y)$, then we have (cf. Lemma \ref{diamond-lemma}) 
$$S \lessdot S[\{x,y\}/x(y)]\lessdot S[\{x,y\}/x(y)][y(x)/\{x,y\}]={\mathsf{Flip}}_S(x,y).$$ This sequence is in fact the shortest way to encode the flip relation $\lessdot_{\operatorname{flip}}$.

\smallskip

The proof of the converse direction consists in showing how to transform progressively a covering
sequence $C_1\lessdot \cdots\lessdot C_n$, where $C_1$ and $C_n$ are constructions, into one whose successive pairs of coverings are of the form of a fusion followed by a splitting of the resulting node (cf. Lemma \ref{diamond-lemma}). In the first step, by applying Lemma \ref{claim2} repetitively (and by well-foundedness of the multiset ordering), we transform the starting covering sequence into a sequence $P=(C_1\lessdot \cdots\lessdot C_n)$
such that $\#(P)=\{2,\ldots,2\}$, i.e., such that all the fusions of $P$ are contractions of edges between singleton nodes. If $n\geq 2$, the first covering $C_1\lessdot C_2$ must be an $(\set{a},\set{b})$-fusion, since no splitting can be applied to a construction. We note that, by minimality of $\#(P)$, the node $X=\set{a,b}$ cannot be part of a fusion in the rest of the sequence; instead, since $C_n$ is a construction, $X$ must eventually be split, at the latest in the final covering step $C_{n-1}\lessdot C_n$. 
So there is a $(\set{b},\set{a})$-splitting step $C_j\lessdot C_{j+1}$, such that all the steps $C_i \lessdot C_{i+1}$, for $2\leq i<j$, leave $X$ untouched and hence commute with the $(\set{b},\set{a})$-splitting step. Therefore, we can rearrange $P$ (without changing its length $n$ or its weight $\#(P)$) by moving the $(\set{b},\set{a})$-splitting step leftwards until it follows immediately after the $(\set{a},\set{b})$-fusion, so that the first two covering steps in the resulting sequence
$P_1=(C_1\lessdot C_2 \lessdot C'_3 \cdots\lessdot C'_n=C_n)$ form exactly a flip of $a$ and $b$ (cf. again Lemma \ref
{diamond-lemma}). Moreover, $C'_3$ is again a construction. 
We can repeat this procedure with the shorter sequence $(C'_3 \lessdot\cdots\lessdot C'_n=C_n)$, and so on, until we reach a sequence $P_m=(T_1\lessdot T_2 \lessdot T_3\lessdot \cdots\lessdot T_{2m+1}$) with $2m+1=n$, $T_1=C_1$, $T_2=C_2$, $T_3=C_3'$ and $T_{2m+1}=C_n$. This sequence reads as $C_1 \lessdot_{\operatorname{flip}} T_3 \lessdot_{\operatorname{flip}} \cdots \lessdot_{\operatorname{flip}} T_{2m-1}\lessdot_{\operatorname{flip}} C_n$, which completes the proof.
 \end{proof}
\begin{remark} \label{subsume-BM}
As an immediate corollary of Proposition \ref{lessdot-order}
and Proposition \ref{GFO-FO}, we get an alternative proof of Lemma 2.8 of \cite{BM} (see also Proposition 1 in \cite{CLA2}), stating that the
 reflexive and transitive closure of the flipping relation $\lessdot_{\operatorname{flip}}$ of Definition \ref{flip-order} is a partial order.
 \end{remark}

\subsection{Generalised flip order in terms of generalised inversions} \label{inversion-subsection}

Our comparison of the vertex-restriction of the generalised flip order with the flip order and the work of Barnard and McConville suggest that we should be able to provide a characterisation of the former in terms of (generalised) inversions. In this section, we give such a characterisation, by generalising the chain of arguments used in \cite{BM}. 
\smallskip

 For a construct $S:{\bf H}$, $a,b\in H$ and the nodes $A$ and $B$ of $S$ such that $a\in A$ and $b\in B$, write
\begin{itemize}
\item $b<_S a$ iff the unique path from $B$ to the root of $S$ passes through $A$, and

\item $a=_S b$ iff $A=B$,
\end{itemize}
and define
$$\Inv_{\bf H}(S):=\{(a,b)\in H\times H\,|\,a<b \mbox{ and } b<_S a\}$$
and $$\Invt_{\bf H}(S):=\{(a,b)\in H\times H\,|\,a<b \mbox{ and } a=_S b\}.$$
We refer to the elements of $\Inv_{\bf H}(S)\cup \Invt_{\bf H}(S)$ as {\em generalised inversions}\footnote{The term ``generalised inversion'' is justified by the encoding $\delta$ of constructs of
a hereditarily ordered hypergraph ${\bf H}$ as ordered partitions, defined recursively as follows: if $S=X(S_1,\dots,S_n):{\bf H}$, 
we set
$\delta(S)=(\delta(S_1),\dots,\delta(S_n),X)$ (this encoding corresponds to a postorder traversal of $S$). Then $b<_S a$ reads as an inversion in $\delta(S)$ in the classical sense, whereas $a=_S b$ means that $a$ and $b$ belong to the same block of the ordered partition.}. In addition, the elements of $\Inv_{\bf H}(S)$ (resp. $\Invt_{\bf H}(S)$) will be called {\em good pairs} (resp. {\em neutral pairs}) of $S$.
 If $(a,b)$ are vertices of ${\bf H}$ such that $a<b$ and $a<_S b$, we say that $(a,b)$ is a {\em bad pair} of $S$. Finally, we say that $a$ and $b$ are incomparable in $S$ if neither $b\leq_S a$ nor $a<_S b$ holds.
\smallskip

\begin{definition}

An ordered hypergraph ${\bf H}$ is {\em right-filled} if, for any triple of vertices $a,b,c\in H$ such that $a<b<c$ and any hyperedge $X\in {\bf H}$ such that $a,c\in X$, there exists a hyperedge $Y\in{\bf H}$ such that $\{b,c\}\subseteq Y\subseteq (X\backslash\{a\})\cup\{b\}$. 
\end{definition}

\begin{remark}
As a side illustration of the previous definition, we note that if ${\bf H}$ is right-filled, and if $E=\set{x_1<\cdots<x_n}$ is any hyperedge of ${\bf H}$, then all sets $E_i=\set{x_i<\cdots<x_n}$, for $1<i<n$, are connected in ${\bf H}$: indeed, applying the definition to $E$ and to $x_i,x_{i+1},x_n$, we get hyperedges $Y_i$ such that
$\set{x_{i+1},x_n}\inc Y_i \inc E_{i+1}$, from which the conclusion follows easily. So we have that ${\bf H}_E$ as a polytope is at least as truncated as the hypercube $\hyper{C}^{\set{x_n<\cdots<x_1}}$. 
 \end{remark}

In the following easy lemma, we show that the property of being right-filled is inherited by subhypergraphs (including not necessarily connected ones). 

\begin{lemma}\label{s000}
If ${\bf H}$ is an ordered right-filled hypergraph and if $K\subseteq H$, then the restricted hypergraph ${\bf H}_{K}$ is also right-filled (with respect to the induced ordering on $K$).
\end{lemma}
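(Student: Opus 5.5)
The plan is to verify the right-filledness condition for $\hyper{H}_K$ directly from the definition, using the one feature that makes restriction well behaved: the hyperedges of $\hyper{H}_K$ are exactly those hyperedges of $\hyper{H}$ that are contained in $K$. The key observation is that whenever a hyperedge $Y$ is sandwiched below a set contained in $K$, it is automatically contained in $K$, and hence is a hyperedge of $\hyper{H}_K$.

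Concretely, we fix vertices $a,b,c\in K$ with $a<b<c$ in the induced ordering on $K$. This is the restriction of the total order on $H$, so the same inequalities hold in $H$. We also fix a hyperedge $X\in\hyper{H}_K$ with $a,c\in X$. By definition of restriction, $X\in\hyper{H}$ and $X\inc K$.

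Since $\hyper{H}$ is right-filled, there is a hyperedge $Y\in\hyper{H}$ with
$$\set{b,c}\inc Y\inc (X\backslash\set{a})\union\set{b}.$$
Now $X\inc K$ and $b\in K$, so $(X\backslash\set{a})\union\set{b}\inc K$, and therefore $Y\inc K$. It follows that $Y\in\hyper{H}_K$, and $Y$ satisfies exactly the required inclusions. This proves that $\hyper{H}_K$ is right-filled.

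The atomicity of $\hyper{H}_K$ is inherited trivially, since every singleton $\set{x}$ with $x\in K$ lies in $\hyper{H}_K$. Connectedness of $K$ plays no role anywhere in the argument, which is why the statement holds for arbitrary, not necessarily connected, subsets $K\inc H$. There is no real obstacle in this proof. The only point requiring care is to note explicitly that the element $b$ inserted into the bounding set $(X\backslash\set{a})\union\set{b}$ belongs to $K$, because it was chosen as a vertex of $K$; without that, the bounding set need not lie inside $K$.
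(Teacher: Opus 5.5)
Your proof is correct and matches the paper's argument essentially verbatim: you apply right-filledness of ${\bf H}$ to the hyperedge $X\subseteq K$. You then observe that $(X\backslash\{a\})\cup\{b\}\subseteq K$, which forces the resulting hyperedge $Y$ to belong to ${\bf H}_K$.
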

\begin{proof}
Let $a,b,c\in K$ be such that $a<b<c$ and let $X$ be a hyperedge of ${\bf H}_{K}$ such that $a,c\in X$. Since ${\bf H}_{K}\subseteq {\bf H}$, by the right-filled property of ${\bf H}$, there exists a hyperedge $Y\in{\bf H}$ such that $\{b,c\}\subseteq Y\subseteq (X\backslash\{a\})\cup\{b\}$. The conclusion follows by the definition of ${\bf H}_{K}$, since $(X\backslash\{a\})\cup\{b\}\subseteq K$.
\end{proof}

\begin{lemma}\label{s00}
Suppose that ${\bf H}$ is an ordered right-filled hypergraph, that let $K\inc H$ is connected in ${\bf H}$ and that, for some $a,b,c\in H$, it holds that $a,c\in K$, $b\not\in K$ and $a<b<c$. Then $K\cup \{b\}$ remains connected in ${\bf H}$.
\end{lemma}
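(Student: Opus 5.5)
The plan is to exhibit a single hyperedge $Y$ of ${\bf H}$ that contains $b$ and meets $K$. Once we have it, $K\cup\{b\}$ is connected. Indeed, suppose $K\cup\{b\}=X_1\cup X_2$ is a non-trivial partition with ${\bf H}_{K\cup\{b\}}={\bf H}_{X_1}\cup{\bf H}_{X_2}$. Say $b\in X_1$. Then $X_2\inc K$, and $X_2\neq K$ because $X_1\neq\set{b}$ will follow. More precisely, the partition restricts to $K=(X_1\setminus\{b\})\cup X_2$. If $X_1\setminus\{b\}$ were empty, then $X_2=K$ and the hyperedge $Y\inc K\cup\{b\}$ would meet both blocks, a contradiction. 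Otherwise the restricted partition of $K$ is non-trivial, and every hyperedge of ${\bf H}_K$ lies in ${\bf H}_{X_1}$ or ${\bf H}_{X_2}$, hence in ${\bf H}_{X_1\setminus\{b\}}$ or ${\bf H}_{X_2}$. This contradicts the connectedness of $K$.

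The key step is therefore to find a hyperedge $Y\inc K\cup\{b\}$ with $b\in Y$ and $Y\cap K\neq\emptyset$. Since $K$ is connected and contains both $a$ and $c$, I would use a standard chain argument. There is a sequence of hyperedges $E_1,\dots,E_m$ of ${\bf H}_K$ with $a\in E_1$, $c\in E_m$, and $E_l\cap E_{l+1}\neq\emptyset$. This holds because otherwise the union of all hyperedges reachable from $a$ gives a non-trivial splitting of $K$; here atomicity ensures that singletons cover the vertices.

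The chain yields a hyperedge $X\inc K$ containing elements $x<b<z$. To see this, walk along the chain and look at the union $W$ of the first $l$ hyperedges, which is connected. The vertex $a<b$ lies in $E_1$ and $c>b$ lies in $E_m$. Take the first index $l$ at which $E_1\cup\dots\cup E_l$ contains an element $>b$. If $E_l$ itself contains an element $<b$, we are done with $X=E_l$. Otherwise every element of $E_l$ is $>b$ (since $b\notin K$). To avoid this case, it is cleaner to argue differently, as follows.

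Consider all vertices of $K$ below $b$, call them $K^-$, and those above, $K^+$. Both sets are non-empty. If no hyperedge of ${\bf H}_K$ met both $K^-$ and $K^+$, then every hyperedge of ${\bf H}_K$ would lie in $K^-$ or in $K^+$, because $b\notin K$. That would disconnect $K$. So there is a hyperedge $X\inc K$ with some $x\in X\cap K^-$ and some $z\in X\cap K^+$. We then apply right-filledness to the triple $x<b<z$ and the hyperedge $X$. This gives a hyperedge $Y$ with $\{b,z\}\inc Y\inc (X\setminus\{x\})\cup\{b\}\inc K\cup\{b\}$. So $Y$ contains $b$ and meets $K$ at $z$, and we conclude by the first paragraph.

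The main obstacle was the step of locating a hyperedge of $K$ that straddles $b$; the partition $K=K^-\cup K^+$ resolves it directly. The remaining steps are routine unfolding of the definition of connectedness.
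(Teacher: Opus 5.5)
Your proof is correct and takes essentially the paper's route: locate a hyperedge of ${\bf H}_K$ that straddles $b$, then apply right-filledness to get a hyperedge $Y\subseteq K\cup\{b\}$ containing $b$ and a vertex of $K$. The differences are cosmetic. You find the straddling hyperedge via the split $K=K^-\sqcup K^+$ rather than by walking along a chain from $a$ to $c$, and you spell out the final connectedness check that the paper leaves implicit. (Your abandoned chain attempt would also have worked: the first hyperedge $E_l$ of the chain containing an element above $b$ is either $E_1\ni a$ or meets $E_{l-1}$, all of whose elements lie below $b$.)
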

\begin{proof}
 Let $E_1,\dots ,E_{m-1}$ be a chain of hyperedges of ${\bf K}:={\bf H}_K$ connecting $a$ and $c$. Then there must be at least one hyperedge $E_i$ such that $E_i$ contains elements $x$ and $y$ such that $x<b<y$. By the right-filled property applied to $E_i$, there exists a hyperedge $E$ such that $\{b,y\}\subseteq E\subseteq (E_i\backslash\{x\})\cup \{b\}$, making $K\cup E$, and hence also $K\cup\{b\}$, connected in $\hyper{H}$.
\end{proof}

 \begin{remark}
By Lemma \ref{s00}, if an ordered hypergraph ${\bf H}$ is right-filled, then it is hereditarily ordered. In what follows, the notation $C=X(C_1,\dots,C_n):{\bf H}$ will also imply that ${\bf H},X\leadsto H_1<\cdots <H_n$ and $C_i:{\bf H}_i$. 
\end{remark}

\begin{example} The hypergraph $\hyper{H}^\maltese$ is hereditarily ordered, but not right-filled (since $1,3\in \{1,2,3\}\in{\bf H}$ and $1<2<3$, right-filledness would require that $\{2,3\}\in{\bf H}$). The hypergraph $$\hyper{H}^{\text{\ding{170}}}= \hyper{H}^\maltese\cup \{2,3\}=\{\{1\},\{2\},\{3\},\{4\},\{2,3\},\{3,4\},\{1,2,3\},\{2,3,4\}\}$$ is right-filled. The Hasse diagram of the GFO for $\hyper{H}^{\text{\ding{170}}}$ can be found in Figure \ref{hassediag5} in the Appendix. 
\end{example}

\begin{lemma}\label{s11}
Suppose that ${\bf H}$ is a connected ordered right-filled hypergraph and let $a,b\in H$ and $X\subset H$ be such that $X<\{b\}<\{a\}$. Then there exists a chain of hyperedges of ${\bf H}$ connecting $b$ and $a$ that avoids all the vertices from $X$.
\end{lemma}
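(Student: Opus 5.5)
Since $\hyper{H}$ is connected, we start from some chain of hyperedges $E_1,\dots,E_m$ connecting $b$ and $a$, i.e.\ $b\in E_1$, $a\in E_m$ and $E_i\cap E_{i+1}\neq\emptyset$. The idea is to ``push'' every hyperedge of this chain to the right of $X$ by means of the right-filled property, in the same spirit as the proof of Lemma \ref{s00}. The key observation is that all elements of $X$ are smaller than $b$ (and than $a$). So whenever a hyperedge $E$ of the chain meets $X$ in some $x$ and also contains some vertex $y>b$ (or $y\geq b$), we may apply right-filledness to the triple $x<b<y$ (or $x<b'<y$ for a suitable $b'$) to obtain a hyperedge containing $y$ and $b$ but not $x$.

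More concretely, I would avoid manipulating the chain step by step and instead argue by connectivity. Let $W=\setc{w\in H}{w\geq b}$. By hypothesis $X\cap W=\emptyset$, and $a,b\in W$. It then suffices to prove the following claim: the set $W$ is connected in $\hyper{H}$, i.e.\ $\hyper{H}_W$ is connected. A chain of hyperedges inside $\hyper{H}_W$ joining $b$ and $a$ then avoids $X$ automatically. This is actually a stronger statement, and I will prove it by showing that every $w\in W$ lies in the same connected component of $\hyper{H}_W$ as $b$.

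To prove the claim, take $w\in W$ with $w>b$. By connectivity of $\hyper{H}$, there is a hyperedge chain from $b$ to $w$; it yields a path $b=v_0,v_1,\dots,v_k=w$ of vertices, where consecutive vertices lie in a common hyperedge. I argue by induction on the number of indices with $v_i<b$ (vertices outside $W$), and secondarily on length. If some hyperedge $E$ of the chain contains both an element $x<b$ and an element $y\geq b$ with $y\neq b$, right-filledness applied to $x<b<y$ gives a hyperedge $Y$ with $\set{b,y}\subseteq Y\subseteq (E\setminus\set{x})\cup\set{b}$. This lets us jump directly from $b$ to $y$ while strictly removing vertices below $b$. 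Formally, I take the \emph{last} hyperedge in the chain that contains a vertex $<b$. If it also contains a vertex $y>b$ (it must, or the chain could not continue above $b$ without $b$ itself, in which case we shortcut at $b$), we replace the whole initial segment by $Y$. The remainder of the chain stays inside $W$, but $Y$ may still contain elements of $E\setminus\set{x}$ that are $<b$. So I iterate, applying right-filledness to $Y$ with the next smallest element $x'<b$ of $Y$, each time producing a hyperedge containing $\set{b,y}$ with strictly fewer elements below $b$. This terminates with a hyperedge containing $b$ and $y$ inside $W$.

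The main obstacle is making this iteration rigorous: each application of right-filledness inserts $b$ and removes one small element, and we need the new hyperedge still to contain $y$ and $b$. That is guaranteed by $\set{b,c}\subseteq Y$ with $c=y$ fixed. The number of elements below $b$ strictly decreases, since the only element added is $b$ itself. A secondary point is the case where the relevant hyperedge contains $b$ itself but no vertex below it, which is trivial. After establishing the claim, the lemma follows immediately.
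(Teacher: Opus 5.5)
Your proof is correct, but it is organised differently from the paper's.

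The paper keeps the given $X$ and inducts on the length $m$ of a chain $E_1,\dots,E_m$. It picks $v\in E_{m-1}\cap E_m$ and splits into two cases:
\begin{itemize}
\item if $v<b$, right-filledness for $v<b<a$ in $E_m$ gives at once a hyperedge containing $a$ and $b$, which is then purged of $X$ as in the base case;
\item if $v>b$, it applies the induction hypothesis to $E_1,\dots,E_{m-1}$ (from $b$ to $v$) and purges $E_m$ of $X$ via the triples $x<v<a$ or $x<a<v$.
\end{itemize}

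You instead enlarge $X$ to $\setc{w\in H}{w<b}$, i.e.\ you prove that the upper set $W=\setc{w\in H}{w\geq b}$ is connected. This is in fact equivalent to the lemma, by taking that maximal $X$. It removes the need for induction: only the last hyperedge $E_j$ meeting $H\setminus W$ must be repaired, since the tail $E_{j+1},\dots,E_m$ already lies in $W$. The repair is the same iterated use of right-filledness (insert $b$, drop one small vertex) that the paper uses in its base case.

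What your route buys:
\begin{itemize}
\item a shorter, induction-free argument;
\item a cleaner statement: upper sets of a connected right-filled hypergraph are connected;
\item explicit treatment of the case where the linking vertex equals $b$, which the paper's dichotomy $v<b$ / $b<v$ leaves implicit.
\end{itemize}
The paper's version never has to formulate the stronger claim, at the cost of the induction and the case analysis on $v$.

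Two points to tighten. First, when $j<m$ and $b\notin E_{j+1}$, you must choose $y\in E_j\cap E_{j+1}$ (necessarily $y>b$), not an arbitrary vertex $y>b$ of $E_j$. Otherwise the purged hyperedge $Y^{\ast}\supseteq\set{b,y}$ need not meet $E_{j+1}$. For $j=m$, take $y=w$. Your parenthetical remark shows this is what you intend, but it should be stated. Second, the announced induction on the number of indices $v_i<b$ (and the vertex path $v_0,\dots,v_k$) is never used, since the argument via the last hyperedge meeting $H\setminus W$ is direct, so you can drop it.
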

\begin{proof}
Since ${\bf H}$ is connected, there exists a chain of hyperedges of ${\bf H}$ connecting $b$ and $a$. Let $E_1,\dots ,E_m\in{\bf H}$ be such a chain of hyperedges.
We prove the claim by induction on $m$.
\begin{itemize}
\item Suppose that $m=1$, i.e., that $a,b\in E_1$. If $E_1\cap X=\emptyset$, the desired chain is trivially $E_1$. If $E_1\cap X\neq\emptyset$, since $x<b<a$ for all $x\in X$, we can successively apply the right-filled property in order to eliminate all $x\in X$ from $E_1$, leading to a hyperedge $E^{\ast}_1$ such that $\{a,b\}\subseteq E^{\ast}_1\subseteq E_1\backslash X$. 
\item Suppose that $m>1$ and pick $v\in E_{m-1}\cap E_m$. Now compare $v$ and $b$.
\begin{itemize}
 \item If $v<b<a$, then, by the right-filled property for $E_m$, there exists a hyperedge containing both $a$ and $b$, and we proceed as in the base case.
\item If $b< v$, we consider the chain $E_1,\dots ,E_{m-1}$ connecting $b$ and $v$. By induction, there exists a chain $\hat{E}$ connecting them which avoids $X$. Now consider the hyperedge $E_m$. If $X\cap E_m= \emptyset$, the desired chain is obtained by concatenating $\hat{E}$ with $E_m$. If $X\cap E_m\neq \emptyset$, we can eliminate all the $x$'s from $E_m$ just like we did in the base case, leading to a hyperedge $E^{\ast}_m$ such that $\{a,v\}\subseteq E^{\ast}_m\subseteq E_m\backslash X$. The desired chain is now obtained by concatenating $\hat{E}$ with $E^{\ast}_m$.
\end{itemize}
\end{itemize}
\end{proof}

\begin{lemma}\label{eee}
For constructs $S,T:{\bf H}$ it holds that $S=T$ if and only if $\Inv_{\bf H}(S)=\Inv_{\bf H}(T)$ and $\Invt_{\bf H}(S)=\Invt_{\bf H}(T)$.
\end{lemma}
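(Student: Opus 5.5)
The forward implication is trivial, so the work is to show that the pair $(\Inv_{\bf H}(S),\Invt_{\bf H}(S))$ determines $S$. The plan is to reconstruct $S$ from this data by induction on $|H|$.

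The key observation is that the two sets together determine, for every pair of distinct vertices $a<b$, which of the four relations holds: $a=_S b$, $b<_S a$, $a<_S b$, or incomparability. The first two cases are read off directly from $\Invt_{\bf H}(S)$ and $\Inv_{\bf H}(S)$. For the remaining two we cannot read off the answer from the sets alone, so we use the construct structure: the pair $(a,b)$ is comparable in $S$ exactly when the nodes containing $a$ and $b$ lie on a common root-path. I would avoid needing to decide this for arbitrary pairs, and instead recover the root directly.

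\emph{Step 1: recover the root.} Let $X=\textrm{root}(S)$. Every vertex $c\notin X$ satisfies $c<_S x$ for all $x\in X$. I claim that $X$ is exactly the set of vertices $x$ such that no $y$ has $x<_S y$. A root vertex is below nothing, so it satisfies this condition. Conversely, a vertex $c\notin X$ lies below the root, which is nonempty, so it fails the condition.

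The relation $x<_S y$ with $x\neq y$ is visible from the data in both orders. If $y<x$, it is the good pair $(y,x)\in\Inv_{\bf H}(S)$. If $x<y$, it is the bad pair $(x,y)$, which is \emph{not} directly in the data; this is the main obstacle. I would handle it by characterising $X$ differently, as the unique set with the following three properties:
\begin{itemize}
\item[(i)] it is closed under $=_S$, read from $\Invt_{\bf H}(S)$;
\item[(ii)] for every $x\in X$ and $c\notin X$ with $x<c$, we have $(x,c)\in\Inv_{\bf H}(S)$;
\item[(iii)] for every $x\in X$ and $c\notin X$ with $c<x$, we have $(c,x)\notin\Inv_{\bf H}(S)$.
\end{itemize}

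Suppose $S$ and $T$ have the same data but roots $X\neq X'$. If $X\cap X'\neq\emptyset$, then (i) for both constructs forces $X=X'$, since $X$ and $X'$ are both the $=$-classes of a common element under the same relation $\Invt_{\bf H}$. So $X\cap X'=\emptyset$. Take $x\in X$ and $x'\in X'$. Then $x'<_S x$ and $x<_T x'$. If $x<x'$, the pair $(x,x')$ is bad in $S$, hence not in $\Inv_{\bf H}(S)$, but it lies in $\Inv_{\bf H}(T)$ because $x<_T x'$. This contradicts equality of the data. The case $x'<x$ is symmetric, with the roles of $S$ and $T$ exchanged. Hence $\textrm{root}(S)=\textrm{root}(T)$.

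\emph{Step 2: induct.} Write $S=X(S_1,\dots,S_n)$ and $T=X(T_1,\dots,T_n)$ with ${\bf H},X\leadsto H_1,\dots,H_n$. Both constructs share these components, so $S_i,T_i:{\bf H}_i$. For $a,b\in H_i$, the relations $<_S$ and $=_S$ restricted to $H_i$ coincide with $<_{S_i}$ and $=_{S_i}$. Consequently $\Inv_{{\bf H}_i}(S_i)=\Inv_{\bf H}(S)\cap(H_i\times H_i)$, and the same holds for $\Invt$ and for $T$. The induction hypothesis then gives $S_i=T_i$ for every $i$, and therefore $S=T$. The base case $|H|=1$ is trivial.

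The only delicate point is Step 1. It is handled by comparing two candidate constructs rather than reconstructing $S$ outright, which sidesteps the fact that bad pairs are invisible in the data. The argument uses no right-filledness and no ordering of the components.
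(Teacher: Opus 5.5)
Your proof is correct and follows the paper's own route: equal neutral pairs give the same node partition, equal good pairs then force $\textrm{root}(S)=\textrm{root}(T)$, and induction on the immediate subconstructs finishes the argument; your Step~1 just spells out the root comparison that the paper states in one line. There are two harmless slips. First, when $x<x'$ the pair $(x,x')$ is good in $S$ (since $x'<_S x$) and bad in $T$, not the reverse, and it still gives the contradiction. Second, your list (i)--(iii) does not by itself single out $X$: all of $H$ satisfies it vacuously, so you would need $X$ to be a single $=_S$-class, but your actual two-construct comparison never relies on that list.
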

\begin{proof}
The equality of constructs directly implies the equality of the corresponding sets of good and neutral pairs. The other direction is proven by induction on $|H|$. The equality $\Invt_{\bf H}(S)=\Invt_{\bf H}(T)$ implies that the partitions of the set $H$ determined by the nodes of $S$ and $T$ match. Since $\Inv_{\bf H}(S)=\Inv_{\bf H}(T)$, it must also be the case that $\text{root}(S)=\text{root}(T)$, and the conclusion then follows by induction. 
\end{proof}

\begin{remark}\label{nor}
The map from constructs to their sets of neutral pairs is not monotone. The failure of monotonicity is evident already on the smallest example given by ${\bf H}:=\{\{1\},\{2\},\{1,2\}\}$, for which we have $\{1,2\}\lessdot 1(2)$ and $\Invt_{\bf H}(\{1,2\})=\{(1,2)\} \nsubseteq \emptyset = \Invt_{\bf H}(1(2))$. Nevertheless, the neutral pair $(1,2)$ of the construct $\{1,2\}$ is ``not lost'' when making the split leading to the construct $1(2)$ - it now appears as a good pair. 
 \end{remark}
Remark \ref{nor} suggests that the characterisation of the GFO in terms of inversions (i.e., good pairs) and neutral pairs must consider these two kinds of pairs {\em together}, in some structured way. The following two lemmas pave the way to our characterisation result.

\begin{lemma}\label{s14}
Let ${\bf H}$ be an ordered right-filled hypergraph and let $S,T:{\bf H}$ be such that $\Inv_{\bf H}(S)= \Inv_{\bf H}(T)$ and $\Invt_{\bf H}(S)\subsetneq \Invt_{\bf H}(T)$. Then there exists a construct $U:{\bf H}$ such that $U\lessdot^+ T$, $\Inv_{\bf H}(S)= \Inv_{\bf H}(U)$ and $\Invt_{\bf H}(S)\subseteq\Invt_{\bf H}(U)\subsetneq \Invt_{\bf H}(T)$. 
\end{lemma}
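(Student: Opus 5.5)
The plan is to obtain $U$ from $T$ by breaking up a single node of $T$, guided by how $S$ separates the elements of that node, and then to reach $T$ back from $U$ by fusions only. Because a fusion of a parent $P$ with a child $C$ satisfying $C<P$ is a covering, this gives $U\lessdot^+ T$.

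\emph{Choice of the node and of the parent block.} Pick $(a,b)\in\Invt_{\bf H}(T)\setminus\Invt_{\bf H}(S)$ and let $Z$ be the node of $T$ containing $a$ and $b$. Let ${\bf Z}^\uparrow:={\bf H}_{\valup_T(Z)}$. Consider the restriction $\restrconstr{S}{{\bf H}_Z}$, taken in the sense of Definition \ref{restriction-definition} together with Remark \ref{rest1} (first restricting to ${\bf Z}^\uparrow$ if needed). Let $P$ be its root label and set $C:=Z\setminus P$. Since $a=_T b$ but not $a=_S b$, the set $C$ is nonempty.

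\emph{$C<P$ is automatic.} I first check that every element of $P$ is an ancestor in $S$ (or a node-mate) of every element of $C$. This is exactly what the recursive definition of restriction gives: the root of the restriction is the first node of $S$ meeting $Z$. Now suppose $p\in P$, $c\in C$ and $p<c$. Then $c<_S p$, so $(p,c)\in\Inv_{\bf H}(S)=\Inv_{\bf H}(T)$. But $p=_T c$, which is a contradiction. Hence $C<P$.

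\emph{Construction of $U$.} Decompose ${\bf H}_{\valup_T(Z)}\setminus P$ into connected components. Let $C^1,\dots,C^k$ be the traces of $C$ on those components that meet $C$; each $C^i$ is a tube of ${\bf H}$ because it is a node of the restriction. Define $U$ by replacing the node $Z$ in $T$ with $P(C^1(\dots),\dots,C^k(\dots),\dots)$. Each subtree of $Z$ in $T$ is regrafted under the $C^i$ whose component contains it, or under $P$ otherwise. This is the squashing shape of Lemma \ref{squashing-lemma}, read in reverse. $U$ is a construct by Lemma \ref{splitformally}, and fusing $C^k,\dots,C^1$ successively into $P$ gives $U\lessdot^+T$, each fusion satisfying the order condition because $C^i<P$.

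\emph{Checking the inversion sets.} For the neutral pairs, $\Invt_{\bf H}(U)$ is $\Invt_{\bf H}(T)$ minus the pairs between different blocks among $P,C^1,\dots,C^k$. None of these removed pairs is neutral in $S$, because $P$ is a separate node of $S$ above $C$ and the $C^i$ lie in different components. Hence $\Invt_{\bf H}(S)\subseteq\Invt_{\bf H}(U)$, and the inclusion $\Invt_{\bf H}(U)\subsetneq\Invt_{\bf H}(T)$ is strict since $(a,b)$ is removed. For the good pairs, the only new ancestor relations in $U$ among elements of $Z$ are $C^i$ below $P$. These are bad pairs because $C<P$, so no good pair is created there.

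\emph{Main obstacle: losing good pairs.} The delicate step is to show that no good pair is lost. The danger is a pair $(c,d)$ with $c\in C^i$, $c<d$, and $d$ in a $T$-subtree of $Z$ that $U$ attaches under $P$ or under some other $C^j$. Such a pair was good in $T$, hence good in $S$, so $d<_S c$. Then the tube $\valup_S$ of the node of $c$, restricted to $\valup_T(Z)$, contains $c$ and $d$ and avoids $P$, since $P$ lies above $c$ in $S$. So $d$ lies in the component of ${\bf H}_{\valup_T(Z)}\setminus P$ containing $c$, i.e. $d$'s subtree is grafted under $C^i$, a contradiction. Making this connectivity transfer between $S$, $T$ and the restriction precise is the step I expect to need the most care. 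Right-filledness, via Lemmas \ref{s00} and \ref{s11}, would be used there if the restricted tube is not directly connected inside ${\bf H}_{\valup_T(Z)}$.
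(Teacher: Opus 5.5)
Your proposal is correct in substance and takes a genuinely different route from the paper. Three justifications need repair, listed below.

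\textbf{Comparison with the paper.} The paper splits the node $X\ni a,b$ using only data attached to $a$ and $b$: $A$ is the set of elements up to $a$ together with those between $a$ and $b$ that are neutral with $a$ in $S$. It sets $U=T[B\langle A\rangle/X]$ and must then prove $(\heartsuit)$, $(\clubsuit)$, $(\spadesuit)$ by a long induction on chains of hyperedges, calling on right-filledness (Lemmas~\ref{s000} and~\ref{s00}) throughout.

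You instead let $S$ dictate the split: $P$ is the part of $Z$ lying in the topmost node $N$ of $S$ meeting $\valup_T(Z)$. With this choice, $C<P$ and the preservation of good and neutral pairs all follow from $\Inv_{\bf H}(S)=\Inv_{\bf H}(T)$, $\Invt_{\bf H}(S)\subseteq\Invt_{\bf H}(T)$ and the fact that the sets $\valup$ are tubes.

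Right-filledness then enters only through hereditary ordering, which lets you index the blocks as $C^1<\cdots<C^k$. You should say this explicitly. When you fuse $C^k$, then $C^{k-1}$, and so on, each $C^i$ must lie below $P\cup C^{i+1}\cup\cdots\cup C^k$, not merely below $P$.

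Your argument is therefore much shorter and appears to give the lemma for all hereditarily ordered hypergraphs. What the paper's choice buys is that the chosen pair $(a,b)$ is itself separated in $U$. Yours does not guarantee this, but Theorem~\ref{gfo-inv} only needs strict decrease of $\Invt$.

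\textbf{Repairs needed.}

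(1) \emph{Definition of $P$.} A node of a construct need not be a tube; for example, the root of $\{1,3\}(2):{\bf K}^{\{1,2,3\}}$ is not. So restriction to ${\bf H}_Z$ is not covered by Definition~\ref{restriction-definition}, and ``the first node of $S$ meeting $Z$'' is not given by that definition. Instead take $P$ to be the root of the restriction of $S$ to the tube $\valup_T(Z)$. Thus $P=N\cap\valup_T(Z)$ with $\valup_T(Z)\subseteq\valup_S(N)$. You must then prove $P\subseteq Z$. An element of $P\setminus Z$ is strictly below $Z$ in $T$ but weakly above the elements of $Z$ in $S$, so it produces a neutral or good pair of $S$ absent from $T$. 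Without this, your step ``$p=_T c$'' is unsupported.

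(2) \emph{The main connectivity step.} Intersecting $\valup_S(M)$ (with $M$ the $S$-node of $c$) with $\valup_T(Z)$ could disconnect it. Show instead that $\valup_S(M)\subseteq\valup_T(Z)\setminus P$:
\begin{itemize}
\item Since $c\notin N$, every $y\in\valup_S(M)$ is strictly below each $p\in P$ in $S$.
\item If such a $y$ lay in a strict $T$-ancestor of $Z$, it would be strictly above $p$ in $T$. The pair formed by $y$ and $p$ would then be good in exactly one of $S$ and $T$, contradicting $\Inv_{\bf H}(S)=\Inv_{\bf H}(T)$.
\item Hence $\valup_S(M)$ is a connected set avoiding the $T$-ancestors of $Z$, so it lies in the tube $\valup_T(Z)$, and it avoids $P\subseteq N$.
\end{itemize}
This settles the loss of good pairs without Lemmas~\ref{s00} or~\ref{s11}. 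It also justifies your claim that no two distinct $C^i$ share an $S$-node.

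(3) \emph{Strictness.} The strict inclusion $\Invt_{\bf H}(U)\subsetneq\Invt_{\bf H}(T)$ is not due to $(a,b)$. Take the complete graph on $\{1,2,3\}$ with $S=3(2(1))$, $T=\{1,2,3\}$ and $(a,b)=(1,2)$. Your construction gives $U=3(\{1,2\})$, in which $(1,2)$ is still neutral. Strictness holds instead because every pair $(c,p)$ with $c\in C$ and $p\in P$ disappears.

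\textbf{Minor points.} The $C^i$ need not be tubes, and nothing depends on it. Lemma~\ref{splitformally} does not literally apply, since the order condition is reversed. $U$ is nevertheless a construct by the same component bookkeeping as in the squashing construction.
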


\begin{proof}
 Pick $(a,b)\in \Invt_{\bf H}(T)\backslash \Invt_{\bf H}(S)$ and let $X=\{x_1<\cdots <x_n\}$ be the vertex of $T$ such that $a=x_i$ and $b=x_j$ for some $1\leq i<j\leq n$.

We define $U:=T[B\langle A\rangle /X]$, 
 where the choice of decomposition $X=B\cup A$ is such that $\{x_1,\dots,a\}\subseteq A$, $\{b,\cdots,x_n\}\subseteq B$ and for $x\in \{x_{i+1},\dots,x_{j-1}\}$, we set $x\in A$ if $(a,x)$ is a neutral pair of $S$; otherwise, we set $x\in B$. By construction, $(a,b)\not\in \Invt_{\bf H}(U)$. In addition, no new neutral pairs are produced by the squashing that defines $U$, so we have that $\Invt_{\bf H}(U)\subsetneq \Invt_{\bf H}(T)$. 

\smallskip

Suppose that the subconstruct of $T$ rooted at $X$ is given by $X(C_1,\dots,C_t):{\bf K}$, where ${\bf K},X\leadsto K_1,\cdots ,K_t$ and $C_i:{\bf K}_i$ and let $A^1<\cdots <A^q$ be the disintegration of $A$ involved in the $(B,A)$-squashing of $X$ that defines $U$. For each $1<j<q$, write $I_j\subseteq \{1,\dots,t\}$ for the set of indices such that, for $i\in I_j$, the set $A^j\cup K_i$ is not connected in ${\bf K}\backslash B$ and $A^j<K_i$. In order to show that $U$ has other desired properties, we use the following three claims.
\begin{itemize}
\item[($\heartsuit$)] For each $1\leq k<l\leq n$, if $(x_k,x_l)$ is neutral in $S$, then, for all $k\leq k' < l' \leq l$, $(x_{k'},x_{l'})$ is also neutral in $S$.
\item[($\clubsuit$)] If $(x,y)\in A^r\times A^s$ for some $1<r<s<q$, then $(x,y)$ is not neutral in $S$.
\item[($\spadesuit$)] If $(u,v)\in A^j\times K_i$ for some $1<j<q$ and $i\in I_j$, then $(u,v)$ is not good in $S$.
\end{itemize}
 
We first note that the property ($\heartsuit$) implies that $A<B$. Indeed, if there would exist $k<l$ in the interval $[i+1,j-1]$ such that $x_k\in B$ and $x_l\in A$, then, by the definition of $U$, we would have that $(a,x_l)$ is neutral in $S$ while $(a,x_k)$ is not, but this is impossible by ($\heartsuit$), as it requires $(a,x_k)$ to also be neutral in $S$. Having established that $A<B$, we conclude that $T$ can be recovered from $U$ by successive fusions of the $(A^i)$'s with their parent node, which entails that $U\lessdot^{+} T$.

 Next, the properties ($\heartsuit$) and ($\clubsuit$) guarantee that no neutral pair of $S$ gets abolished in $U$, i.e., that $\Invt_{\bf H}(S)\subseteq\Invt_{\bf H}(U)$. To see this, we observe that, having that $\Invt_{\bf H}(S)\subseteq\Invt_{\bf H}(T)$, there are two scenarios in which a neutral pair $(x,y)$ of $S$ would hypothetically be abolished in $U$:
\begin{itemize}
\item $x\in A_r$ and $y\in A_s$ for some $1<r<s<q$: this is impossible by ($\clubsuit$);
\item $x\in A$ and $y\in B$ - in this case, we further distinguish subcases relative to the exact position of $x$ and $y$ in $X$:
\begin{itemize}
\item if $x\leq a$ and $y\geq b$, then, by ($\heartsuit$), $(a,b)$ would have to be neutral in $S$, which contradicts the assumption that $(a,b)\in \Invt_{\bf H}(T)\backslash \Invt_{\bf H}(S)$;
\item if $x\leq a$ and $y=x_k$ for some $k\in[i+1,j-1]$, then the contradiction arises as the definition of $B$ implies that $(a,y)$ is not neutral in $S$, while, at the same time, ($\heartsuit$) requires $(a,y)$ to be neutral in $S$ (since $(x,y)$ is neutral in $S$);
\item if $x=x_k$ for some $k\in[i+1,j-1]$ and $y\geq b$, then the definition of $A$ implies that $(a,x)$ is neutral in $S$ and ($\heartsuit$) implies that $(x,b)$ is also neutral in $S$ (since $(x,y)$ is neutral in $S$), which, altogether, by transitivity, implies that $(a,b)$ is neutral in $S$, contradicting once again the assumption that $(a,b)\in \Invt_{\bf H}(T)\backslash \Invt_{\bf H}(S)$;
\item if $x=x_k$ and $y=x_l$ for some $k<l$ in the interval $[i+1,j-1]$, then, by the definitions of $A$ and $B$, we have that $(a,x)$ is neutral in $S$ and $(a,y)$ is not neutral in $S$, while, at the same time, the transitivity of neutrality implies that $(a,y)$ must be neutral in $S$ (since $(x,y)$ is neutral in $S$), contradiction. 
\end{itemize} 
\end{itemize}

 Finally, the property ($\spadesuit$) guarantees that no good pair of $S$ gets abolished in $U$, i.e., that $\Inv_{\bf H}(S)=\Inv_{\bf H}(U)$. 

\smallskip

 The lemma therefore follows by proving the properties ($\heartsuit$), ($\clubsuit$) and ($\spadesuit$). 
\smallskip

{\em The proof of} ($\heartsuit$). Suppose that there exist indices $1\leq k\leq k'<l'\leq l\leq n$ such that $(x_k,x_l)$ is neutral in $S$ and $(x_{k'},x_{l'})$ is not neutral in $S$. Let $V$, $Y$ and $Z$ be the vertices of $S$ such that $x_k,x_l\in V$, $x_{k'}\in Y$ and $x_{l'}\in Z$, and let $W$ be the first common ancestor of $Y$ and $Z$ in $S$. Then the condition $\Inv_{\bf H}(S)= \Inv_{\bf H}(T)$ implies that the subtrees of $S$ rooted at $V$ and $W$ are disjoint, but this is in contradiction with Lemma \ref{s00}.

\smallskip 
{\em The proof of} ($\clubsuit$) {\em and} ($\spadesuit$). Having defined $$N:=\{(x,y)\in A_i\times A_j\,|\, i<j\}\enspace \mbox{ and } \enspace M:=\{(x,y)\in A^j\times K_i\,|\, 1<j<q, i\in I_j\},$$ our goal is to show that $\Invt_{\bf H}(S)\cap N=\Inv_{\bf H}(S)\cap M=\emptyset$. Suppose, for contradiction, that there exists $(x,y)\in (\Invt_{\bf H}(S)\cap N)\cup (\Inv_{\bf H}(S)\cap M)$. Let $Y$ be the node of $S$ such that $x\in Y$, and suppose that the subconstruct of $S$ rooted at $Y$ is given by $Y(T_1,\dots,T_r):{\bf G}$, where ${\bf G},Y\leadsto G_1<\cdots <G_r$ and $T_i:{\bf G}_i$. Since ${\bf G}$ is connected, there exists a chain of hyperedges $E_1,\dots,E_m\in{\bf G}$, such that $x\in E_1$, $y\in E_m$, and there exists a choice of $q_i\in E_i\cap E_{i+1}$, $1\leq i\leq m-1$. Our proof proceeds by induction on $m$.

\smallskip

Suppose that $m=1$. Since $x$ and $y$ are separated in $U$, i.e., belong to different connected components of ${\bf K}\backslash B$, $E_1$ must intersect at least one vertex on the path, in $U$, starting from $B$ and going to the root vertex of $U$. We proceed by analysing the cases relative to whether $(x,y)$ is a neutral or a good pair of $S$. We shall work out the details only for the case that it is neutral, with the indication that the proof strategy is the same in the case of a good pair. 
\begin{itemize}
\item Suppose that there exists an $e\in E_1\cap B$. Since no element of $B$ can appear strictly above $Y$ in $S$, as this would produce a good pair not present in $T$, it must be the case that $e\in Y$. We now proceed by analysing the position of $x,y$ and $e$ in $X$, relative to $a$ and $b$.
\begin{itemize}
\item if $e<b$, then, by definition of $B$, we have that $(a,e)$ is not neutral in $S$, and hence $a$ must appear either above $Y$, or below $Y$ or on a branch disjoint from $Y$; the first two possibilities are in contradiction with the fact that $\Inv_{\bf H}(S)= \Inv_{\bf H}(T)$, and the third one with the right-filled property in the form of Lemma \ref{s00};
\item if $e\geq b$ and $x<y\leq a$ or $x\leq a<y$, then, by ($\heartsuit$), since $(x,e)$ is neutral in $S$, $(a,b)$ would also have to be neutral in $S$, which contradicts the starting assumption;
\item if $e\geq b$ and $a\leq x<y$, then, by definition of $A$, we have that $a\in Y$, and hence, it must be the case that $e>b$ with $b$ appearing either below $Y$ or on a branch disjoint from $Y$; the first possibility is in contradiction with the fact that $\Inv_{\bf H}(S)= \Inv_{\bf H}(T)$ and the second one with the right-filled property in the form of Lemma \ref{s00}; 
\end{itemize}
\item If $E_1\cap B=\emptyset$, let $V$ be any vertex of $U$ below $B$ such that $E_1\cap V\neq \emptyset$. Observe that $V$ is then also a vertex of $T$ below $X$, and moreover that the path from $V$ to the parent of $B$ in $U$ is the same as the path from $V$ to the parent of $X$ in $T$. Pick an element $e\in E_1\cap V$. Note that if $e<y$, then $(e,y)$ is a good pair in $T$ and hence in $S$, and hence $e$ must appear strictly below $Y$ is $S$, contradicting the fact that $e\in E_1\subseteq G$. Likewise, if $e>y$, then $(e,y)$ is not a good pair in $T$ and hence can neither be good in $S$. Therefore, since $e\in E_1\subseteq G$, the only possibility is that $e\in Y$, but this is contradicting the fact that $\Invt_{\bf H}(S)\subsetneq \Invt_{\bf H}(T)$. 
\end{itemize}
This finishes the proof for the case $m=1$.

\smallskip

Suppose now that $m>1$ and consider $q_{m-1}$. We differentiate two cases.
 
\smallskip
\indent Case (1). If $q_{m-1}<x$, then, by applying the right-filled property of ${\bf G}$ (ensured by Lemma \ref{s000}) to $E_{m}$, we conclude that there exists a hyperedge $E$ of ${\bf G}$ such that $x,y\in E$, which takes us back to our base case.

\smallskip

\indent Case (2). If $q_{m-1}>x$, then $(x,q_{m-1})$ is either a neutral or a good pair of $S$. Suppose that $(x,y)$ is neutral in $S$.
\begin{itemize}
\item If $q_{m-1}$ is strictly above $Y$, and
\begin{itemize}
\item if $x<y<q_{m-1}$, then $(x,q_{m-1}),(y,q_{m-1})\in \Inv_{\bf H}(S)$, and hence, since ($\spadesuit$) holds by induction, $(x,q_{m-1}),(y,q_{m-1})\in \Inv_{\bf H}(U)$, but this configuration is impossible as it produces a cycle in the tree $U$ (given that $x$ and $y$ are separated there);
\item if $x<q_{m-1}<y$, then $(x,q_{m-1})\in \Inv_{\bf H}(S)$, and hence, by induction, $(x,q_{m-1})\in \Inv_{\bf H}(U)$, which means that $q_{m-1}$ and $y$ are separated in $U$, which is impossible by our base case (applied to the pair $(q_{m-1},y)$ and the hyperedge $E_m$ containing them).
\end{itemize}

\item Suppose that $q_{m-1}\in Y$, and hence also $q_{m-1}\in X$. We first show that, in fact, $q_{m-1}\in A$. Suppose, for contradiction, that $q_{m-1}\in B$. Then, if $q_{m-1}\geq b$ and
\begin{itemize}
\item if $x<y<a$ or $x<a<y$, we get a contradiction with the fact that $(a,b)$ is not neutral in $S$, using ($\heartsuit$);
\item if $a<x<y$, by definition of $A$, we get that $a\in Y$, and hence that $(a,q_{m-1})$ is neutral in $S$, which, using ($\heartsuit$) once again, leads to a contradiction with the fact that $(a,b)$ is not neutral in $S$;
\end{itemize}
while, if $q_{m-1}< b$, by definition of $B$, we have that $(a,q_{m-1})$ is not neutral in $S$ and hence
\begin{itemize}
\item if $x<y<a$ or $x<a<y$, we get a contradiction once again using ($\heartsuit$);
\item if $a<x<y$, then, by definition of $A$, we get that $a\in Y$, i.e., that $(a,x)$ is neutral, but since $(x,q_{m-1})$ is also neutral, we get a contradiction by transitivity of neutrality. 
\end{itemize}

Now, since $q_{m-1},y\in Y$ and $q_{m-1},y\in E_m$, our base case implies that $q_{m-1}$ and $y$ cannot be separated in $U$, i.e., that $q_{m-1}\in A_j$. Therefore, $x$ and $q_{m-1}$ are separated in $U$, and this is impossible by induction.
\end{itemize}
This concludes the proof for the case $(x,y)\in \Invt_{\bf H}(S)$. We proceed analogously if $(x,y)\in \Inv_{\bf H}(S)$, in which case the induction argument uses ($\clubsuit$).
 \end{proof}

\begin{lemma}\label{s13}
Let ${\bf H}$ be an ordered right-filled hypergraph and let $S,T:{\bf H}$ be such that $\Inv_{\bf H}(S)\subsetneq \Inv_{\bf H}(T)$ and $\Invt_{\bf H}(S)\subseteq \Invt_{\bf H}(T)\cup \Inv_{\bf H}(T)$. Then there exists a construct $U:{\bf H}$ such that $U\lessdot^+ T$, $\Inv_{\bf H}(S)\subseteq \Inv_{\bf H}(U)\subsetneq \Inv_{\bf H}(T)$ and $\Invt_{\bf H}(S)\subseteq \Invt_{\bf H}(U)\cup \Inv_{\bf H}(U)$. 
\end{lemma}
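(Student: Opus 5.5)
The plan is to mirror the proof of Lemma \ref{s14}, with the rôles of good and neutral pairs exchanged: instead of squashing a node of $T$ to destroy a neutral pair, we perform a local ``un-flip'' of $T$ to destroy a good pair. Since $\Inv_{\bf H}(S)\subsetneq \Inv_{\bf H}(T)$, we pick $(a,b)\in \Inv_{\bf H}(T)\backslash\Inv_{\bf H}(S)$, so $a<b$ and $b<_T a$. Among all such pairs we choose one that is \emph{adjacent} in $T$: writing $A$ for the node of $T$ containing $a$, we take $b$ in a child $B$ of $A$. To see that such a choice is possible, consider any $(a,b)$ in the difference and the path $A=A_0,A_1,\ldots,A_k\ni b$ from $A$ down to the node of $b$. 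We run a minimality argument along this path, as in Proposition \ref{MaxWeakOrder}, using Lemma \ref{s00} and Lemma \ref{s11} (right-filledness) to find consecutive nodes $A_r,A_{r+1}$ and elements $a'\in A_r$, $b'\in A_{r+1}$ with $(a',b')\in\Inv_{\bf H}(T)\backslash\Inv_{\bf H}(S)$. The hypothesis $\Invt_{\bf H}(S)\subseteq\Invt_{\bf H}(T)\cup\Inv_{\bf H}(T)$ is used here to rule out that an intermediate pair is neutral in $S$.

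Next we build $U$ from $T$ by fusing $B$ into $A$ (a $(A,B)$-fusion is \emph{not} available in the increasing direction, so we go downward). We form the contracted node $A\cup B$ and then re-split it via a squashing $T[A\cup B]\,[B'\langle A'\rangle/(A\cup B)]$. Here the decomposition $A\cup B=B'\cup A'$ is chosen exactly as in Lemma \ref{s14}: the elements $\le a$ go to $A'$, the elements $\ge b$ go to $B'$, and an intermediate $x$ goes to $A'$ if and only if $(a,x)$ is good or neutral in $S$. Then $(a,b)$ becomes a bad pair or splits across branches in $U$, so it is no longer in $\Inv_{\bf H}(U)$. We must check that $U\lessdot^+T$. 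For this, we show that $T$ is recovered from $U$ by a sequence of fusions of blocks $A'^{\,j}$ (the disintegration) with their parent and by splits with the order condition respected. The order condition follows from an analogue of ($\heartsuit$): the set of $x$ with $(a,x)$ good or neutral in $S$ is an initial segment of the interval between $a$ and $b$. We prove this analogue, as in Lemma \ref{s14}, by contradiction with Lemma \ref{s00}.

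Then we verify the three invariants. First, $\Inv_{\bf H}(U)\subsetneq \Inv_{\bf H}(T)$: the only pairs whose status changes lie in $A\cup B$ or in pairs (node in $A\cup B$, subtree below). Any such pair that is good in $U$ was already good in $T$, because the operation only moves elements $\ge b$ upward relative to smaller ones, and $(a,b)$ is lost. Second, $\Inv_{\bf H}(S)\subseteq\Inv_{\bf H}(U)$, and third, $\Invt_{\bf H}(S)\subseteq\Invt_{\bf H}(U)\cup\Inv_{\bf H}(U)$. For these two we need analogues of ($\clubsuit$) and ($\spadesuit$): no pair that is good (respectively good-or-neutral) in $S$ is separated into incomparable branches or turned bad by the squashing. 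We prove them by the same induction on the length $m$ of a hyperedge chain joining $x$ and $y$ inside the subhypergraph rooted at the $S$-node of $x$. The base case uses right-filledness to produce a hyperedge meeting the path to the root in $U$, and the inductive step splits according to whether the chain's last junction vertex is below or above $x$, invoking Lemma \ref{s000} and Lemma \ref{s11}.

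The main obstacle is precisely this last verification, the analogues of ($\clubsuit$)/($\spadesuit$), together with the choice of an adjacent pair $(a,b)$. These steps carry the real combinatorial content, since we must control pairs involving the subtrees $C_i$ redistributed by the disintegration. We expect to handle them by a case analysis on the positions of $x,y$ relative to $a$ and $b$, exactly paralleling the four-subcase analysis in the proof of Lemma \ref{s14}, with the transitivity of ``good-or-neutral with respect to $a$'' replacing transitivity of neutrality.
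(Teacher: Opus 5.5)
\textbf{There is a genuine gap: your construction of $U$ does not work.} Your first step matches the paper: reducing to a pair $(a,b)\in\Inv_{\bf H}(T)\setminus\Inv_{\bf H}(S)$ with $b$ in a child $B$ of the node $A\ni a$. (The paper takes a pair of minimal distance and rules out intermediate nodes with Lemma \ref{s00}.) But because $U$ is wrong, the ``analogues of ($\clubsuit$)/($\spadesuit$)'' that you postpone cannot be proved.

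Merging all of $A\cup B$ and re-sorting it only relative to $a$ and $b$ has two defects.
\begin{itemize}
\item \emph{It breaks $U\leq T$.} The elements of $A$ above $b$ end up in the same node $B'$ as the elements of $B$ above $b$. These pairs are bad in $T$ but neutral in $U$. No covering step turns a neutral pair into a bad one (easy direction of Theorem \ref{gfo-inv}), so $U\not\leq T$. Relatedly, contracting $A$ with $B$ is a covering in neither direction unless $A<B$ or $A>B$. So your recovery of $T$ from $U$ ends with a split that is in general unavailable.
\item \emph{It breaks $\Inv_{\bf H}(S)\subseteq\Inv_{\bf H}(U)$.} The rule imported from Lemma \ref{s14} puts an intermediate $x\in B$ with $(a,x)$ \emph{good} in $S$ into $A'$, next to $a$. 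This good pair of $S$ then becomes neutral or incomparable in $U$. In Lemma \ref{s14}, grouping $x$ with $a$ preserves a neutral pair; here $x$ must stay strictly below $a$.
\end{itemize}

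Both defects already occur for permutohedra.
\begin{itemize}
\item On $\{1,2,3\}$, take $T=1(\{2,3\})$ and $S=3(1(2))$. The pair $(a,b)=(1,3)$ is forced, and your recipe gives $U=3(\{1,2\})$. Then $\Inv_{\bf H}(U)=\emptyset\not\supseteq\{(1,2)\}=\Inv_{\bf H}(S)$.
\item On $\{1,2,3,4\}$, take $T=\{1,4\}(\{2,3\})$ and $S=4(3(1(2)))$. Again $(a,b)=(1,3)$ is forced, and you get $U=\{3,4\}(\{1,2\})$. In $U$ the pair $(3,4)$ is neutral, although it is bad in $T$, and $(1,2)$ is no longer good.
\end{itemize}

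\textbf{The missing idea} is to keep apart the parts of $A$ and $B$ that must not interact.
\begin{itemize}
\item The paper first proves ($\diamondsuit$): the pairs $(a,y)$ with $y\in B$, $y>b$, and $(x,b)$ with $x\in A$, $x<a$, are not good in $S$. This uses $\Inv_{\bf H}(S)\subsetneq\Inv_{\bf H}(T)$ and Lemma \ref{s00}.
\item This yields decompositions $A=A_l\cup A_r$ and $B=B_l\cup B_r$ with $a\in A_l$, $b\in B_r$, $B_r>A_l$, no pair of $A_l\times B_r$ good in $S$, and $A_l,B_r$ maximal.
\item Each node is then squashed separately: $U'=T[B_r\langle B_l\rangle/B][A_r\langle A_l\rangle/A]$. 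This is undone by fusions, so $U'\lessdot^+T$. The large elements $A_r$ of $A$ stay on top, and the small elements $B_l$ of $B$ stay below.
\item Finally, suppose $B_r$ hangs directly under the block $A_l^p$ of the disintegration of $A_l$ that contains $a$. Then that edge is contracted, which is the reverse of a split because $A_l^p<B_r$.
\end{itemize}
In every case $(a,b)$ is no longer good in $U$. On your two examples this gives $U=\{1,3\}(2)$ and $U=4(\{1,3\}(2))$, and both satisfy the conclusion of the lemma.
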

\begin{proof}
Pick $(a,b)\in \Inv_{\bf H}(T)\backslash \Inv_{\bf H}(S)$ such that the distance $d$ in $T$ between the nodes $A$ and $B$ containing $a$ and $b$, respectively, is minimal. We first prove that, in this case, $d=1$, i.e., $B$ is a child of $A$. To get a contradiction, suppose that there exists a node $C$ between $A$ and $B$ in $T$ and pick an element $c\in C$. 
\begin{itemize}
\item If $c>b>a$, then, by minimality of $d$, $(a,c)$ must be a good pair in $S$; in addition, since $\Inv_{\bf H}(S)\subsetneq \Inv_{\bf H}(T)$ and $(a,b)$ is not good in $S$, both $(b,c)$ and $(b,a)$ must be pairwise incomparable in $S$. But this configuration is impossible by Lemma \ref{s00}. Likewise, if $c<a<b$, an analogous analysis leads to a contradiction with Lemma \ref{s00}.
\item If $a<c<b$, then, by minimality of $d$, both $(a,c)$ and $(c,b)$ must be good pairs in $S$, but this forces $(a,b)$ to be a good pair in $S$ as well, contradicting the assumption that $(a,b)\not\in \Inv_{\bf H}(S)$.
\end{itemize}

Having chosen $(a,b)\in \Inv_{\bf H}(T)\backslash \Inv_{\bf H}(S)$ such that $B$ is a child of $A$, suppose that $A=\{x_1<\cdots <x_n\}$ and $B=\{y_1<\dots<y_m\}$ with $a=x_i$ and $b=y_j$ for some $1\leq i\leq n$ and $1\leq j\leq m$. In order to define $U$, we shall use the following property.
\begin{itemize}
\item[($\diamondsuit$)] If $(a,b)$ is not a good pair in $S$, then neither are $(a,y_k)$ for $j+1\leq k\leq m$, nor $(x_l,b)$ for $1\leq l\leq i-1$.
\end{itemize}

{\em The proof of } ($\diamondsuit$). Suppose, for contradiction, that $(a,y_k)$ is good in $S$. Since $(a,b)$ is not good in $S$, $b$ can appear either below $a$ or on a branch disjoint from $a$ in $S$. The first possibility is in contradiction with the fact that $\Inv_{\bf H}(S)\subsetneq \Inv_{\bf H}(T)$, since $(b,y_k)$ would be a good pair in $S$ but not in $T$. The second possibility violates Lemma \ref{s00}. The proof that $(x_l,b)$ cannot be good in $S$ is analogous. This finishes the proof of ($\diamondsuit$).

\medskip

 The property ($\diamondsuit$) allows us to define decompositions 
 $A=A_l\cup A_r$ and $B=B_l\cup B_r$, with $A_r$ and $B_l$ possibly empty, such that the pairs $(x,y)\in A_l\times B_r$ are not good in $S$ and $B_r>A_l$. Among all such decompositions, we take the one for which $B_r$ and $A_l$ are maximal subsets of $B$ and $A$ with desired properties. Let $A^1_l<\cdots <A^q_l$ be the disintegration of $A_l$ in the last performed squashing in defining the construct $$U'=T[B_r\langle B_l \rangle/B][A_r\langle A_l \rangle /A],$$ where, if $A_r=\emptyset$ (resp. $B_l=\emptyset$), we set $A_r\langle A_l\rangle :=A$ and trivially declare $A$ to be the single block in the corresponding (trivial) disintegration (resp. $B_r\langle B_l\rangle := B$). If there exists $1\leq p\leq q$ such that $A_l^p$ is the parent of $B_r$ in $U'$ and $a\in A_l^p$, we define $U:=U'[(A_l^p\cup B_r)/A_l^p(B_r)]$; otherwise, we set $U:=U'$. 
 
 The verification of the required properties of $U$ uses similar (lengthy) arguments to those used in the proof of Lemma \ref{s14} and are omitted.
 \end{proof} 

\begin{example}
To ease the understanding of the proof of Lemma \ref{s13}, we successively build $U$ (in terms of the corresponding sequence of covering relations) for constructs $T=\{1,2\}(\{3,4\})$ and $S=\{2,4\}(1,3)$ of the hypergraph ${\bf H}^{\text{\ding{170}}}$ from Figure \ref{hassediag5}, for $(a,b)=(1,4)$ and $(a,b)=(2,3)$. For $(a,b)=(1,4)$, we have that $A_l=\{1\}$, $A_r=\{2\}$, $B_l=\{3\}$, $B_r=\{4\}$, resulting in the following (inverse) covering sequence leading from $T$ to $U$: 
$$\{1,2\}(\{3,4\}) \gtrdot \{1,2\}(4(3)) \gtrdot 2(1,4(3)) \gtrdot \{2,4\}(1,3).$$ For $(a,b)=(2,3)$, we have that $A_r=B_l=\emptyset$, and we trivially have $$\{1,2\}(\{3,4\}) \gtrdot \{1,2,3,4\}.$$
\end{example}

We have now all the ingredients for the characterisation we sought for\footnote{In \cite{KLNPS}, Krob, Latapy, Novelli, Phan and Schwer state an equivalent characterisation of the GFO in the case of permutohedra, without giving the proof.}.
 \begin{theorem}\label{gfo-inv}
If ${\bf H}$ is a right-filled hypergraph, then, for any two constructs $S,T:{\bf H}$, it holds that $S\leq T$ if and only if $\Inv_{\bf H}(S)\subseteq \Inv_{\bf H}(T)$ and $\Invt_{\bf H}(S)\subseteq \Invt_{\bf H}(T)\cup \Inv_{\bf H}(T)$.
\end{theorem}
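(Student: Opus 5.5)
The proof splits into two directions. For ``only if'', it suffices, by transitivity of inclusion, to check that a single covering $S\lessdot S'$ preserves the two conditions. The relevant observation is that $\Inv_{\bf H}(S)\subseteq\Inv_{\bf H}(S')$ and $\Invt_{\bf H}(S)\subseteq \Invt_{\bf H}(S')\cup\Inv_{\bf H}(S')$, since $\Invt_{\bf H}(S')\cup \Inv_{\bf H}(S')$ is itself preserved along chains once both inclusions hold. (If $\Inv$ grows and $\Invt(S)\subseteq\Invt(S')\cup\Inv(S')$, composing two such steps gives $\Invt(S)\subseteq \Invt(S'')\cup\Inv(S'')\cup\Inv(S'')$, which is the required inclusion.) Using Remark~\ref{lem:StabilityByContext}(a), it is enough to treat root fusions and root splits, because a non-root covering changes only the pairs inside one immediate subconstruct.

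For a root $(U,V)$-fusion with $U>V$, the child $V$ is merged into its parent $U$. The pairs $(v,u)$ with $v\in V$ and $u\in U$ were bad pairs in $S$ and become neutral, so they do not matter. Pairs between $V$ and the descendants of $V$ keep their ancestor relation. Pairs between $V$ and the other subtrees of $U$ were incomparable and become ``$u$ above $w$'', which can only add good pairs. Hence $\Inv$ does not shrink and $\Invt$ is preserved. For a root $(V,U)$-split of a node $U\cup V$ with $V<U$, the neutral pairs $(v,u)$ become good pairs, since $U$ is placed below $V$. Every other relation is unchanged or loses only comparabilities of the form ``$u$ above $w$'' for $u\in U$. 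Those lost pairs were good only if $u<w$, and I still need to check that $w$ lands below $V$ in that case. By Definition~\ref{basic_split}, the subtrees that become incomparable with $U$ stay below $V$, and $v<u<w$ gives $(v,w)$ good. The real issue is that $(u,w)$ itself is lost. Here I would invoke right-filledness: I would show via Lemma~\ref{s00} that $w$ must lie in the component of ${\bf H}\setminus V$ containing $U$, so $w$ remains below $U$. This is the delicate point of the first direction.

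For ``if'', I argue by induction on $|\Inv_{\bf H}(T)|+|\Invt_{\bf H}(T)|$, or more precisely on $T$ with respect to the well-founded order $\lessdot^+$. Suppose the two conditions hold. If $\Inv_{\bf H}(S)\subsetneq \Inv_{\bf H}(T)$, Lemma~\ref{s13} produces $U\lessdot^+T$ with $\Inv_{\bf H}(S)\subseteq \Inv_{\bf H}(U)$ and $\Invt_{\bf H}(S)\subseteq \Invt_{\bf H}(U)\cup \Inv_{\bf H}(U)$. By induction, with the strictly smaller $\Inv_{\bf H}(U)$ used as the measure first, we get $S\le U$, hence $S\le T$.

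If instead $\Inv_{\bf H}(S)=\Inv_{\bf H}(T)$, the hypothesis becomes $\Invt_{\bf H}(S)\subseteq \Invt_{\bf H}(T)$, because a pair cannot be both neutral in $S$ and good in $T=$ good in $S$. If this inclusion is an equality, Lemma~\ref{eee} gives $S=T$. Otherwise Lemma~\ref{s14} gives $U\lessdot^+T$ with the same good pairs and a strictly smaller set of neutral pairs still containing $\Invt_{\bf H}(S)$, and we iterate. The induction measure is the lexicographic pair $(|\Inv_{\bf H}(T)|,|\Invt_{\bf H}(T)|)$, which strictly decreases in both cases.

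The main obstacle is the ``only if'' direction for splits. I must verify that no good pair is lost, which uses that the hypergraph is hereditarily ordered and the specific distribution of subtrees in Definition~\ref{basic_split}, together with Lemma~\ref{s00}. The converse direction is essentially bookkeeping on top of Lemmas~\ref{s14} and~\ref{s13}.
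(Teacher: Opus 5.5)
\paragraph{Verdict.} Your converse direction (Lemmas \ref{s13}, \ref{s14}, \ref{eee} with the lexicographic measure) and your fusion case agree with the paper's proof. The gap is in the one non-trivial step of the forward direction, the root split $U\cup V\mapsto V(U)$ with $V<U$.

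\paragraph{Why Lemma \ref{s00} does not suffice.} In the split step, with $\mathbf{G}$ the hypergraph at that node, you need the following. For $u\in U$ and a vertex $w$ below the root with $u<w$, $w$ lies in the component of $\mathbf{G}\setminus V$ containing $U$. You propose to derive this from Lemma \ref{s00}, but that lemma cannot deliver it. It only says that a connected set stays connected after adding an in-between vertex, whereas you need a connection between $u$ and $w$ that \emph{avoids} $V$. Consider the non-right-filled graph with edges $\{1,2\},\{1,3\}$, which is moreover hereditarily ordered. The conclusion of Lemma \ref{s00} holds there: its only instance, $K=\{1,3\}$ with $b=2$, yields the connected set $\{1,2,3\}$. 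Yet the split $\{1,2\}(3)\lessdot 1(2,3)$ destroys the good pair $(2,3)$. So no argument using only the conclusion of Lemma \ref{s00}, even together with hereditary order, can establish the step.

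\paragraph{The missing ingredient.} What you need is Lemma \ref{s11}. In a connected right-filled hypergraph, $X<\{b\}<\{a\}$ implies there is a chain of hyperedges from $b$ to $a$ avoiding $X$. It is proved by repeatedly applying right-filledness to strip the elements of $X$ from the hyperedges of a chain. Apply it to $\mathbf{G}$, which is right-filled by Lemma \ref{s000}, with $X=V$, $b=u$, $a=w$. Then $u$ and $w$ lie in the same component of $\mathbf{G}\setminus V$, and by splittability this is the component containing $U$. Hence, by Definition \ref{basic_split}, the subtree containing $w$ is regrafted under $U$ and $(u,w)$ remains a good pair. This is exactly how the paper closes this step.
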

\begin{proof}
In one direction, we observe that 
\begin{itemize}
\item a fusion $S\lessdot S[(U\cup V)/U(V)]$ does not affect the good or neutral pairs of $S$, it only adds new neutral and good pairs, while
\item a split $S\lessdot S[X(Y)/(X\cup Y)]$ turns certain neutral pairs of $S$ into good pairs. Here, the property of being right-filled ensures that all the good pairs of $S$ remain good pairs $S[X(Y)/(X\cup Y)]$. Indeed, if $(a,b)\in \Inv_{\bf H}(S)$ and if $A$ and $B$ are the nodes of $S$ such that $a\in A$ and $b\in B$, it could happen that $A=X\cup Y$ is an ancestor of $B$ in $S$ with $a\in Y$. In this case, if it turns out that, after the split, the connected component of $Y$ is different from the one of $B$, the pair $(a,b)$ would no longer be good. But thanks to Lemma \ref{s11}, since $X<Y$, this configuration is impossible.
 \end{itemize}

We prove the other direction by induction on $n=|\Inv_{\bf H}(T)\backslash \Inv_{\bf H}(S)|$. The base case is given by $n=0$, i.e., $\Inv_{\bf H}(S)= \Inv_{\bf H}(T)$. In this case, the inclusion $\Invt_{\bf H}(S)\subseteq \Invt_{\bf H}(T)\cup \Inv_{\bf H}(T)$ comes down to the inclusion $\Invt_{\bf H}(S)\subseteq \Invt_{\bf H}(T)$. We proceed by induction on $m=|\Invt_{\bf H}(T)\backslash \Invt_{\bf H}(S)|$. If $m=0$, we conclude by Lemma \ref{eee} that $S=T$ and we are done by the reflexivity of $\leq$. If $m>0$, by Lemma \ref{s14} there exists a construct $U:{\bf H}$ such that $U\lessdot^+ T$, $\Inv_{\bf H}(S)= \Inv_{\bf H}(U)$ and $\Invt_{\bf H}(S)\subseteq \Invt_{\bf H}(U)\subsetneq \Invt_{\bf H}(T)$. The inductive hypothesis implies that $S\leq U$ and hence $S\leq U\lessdot^+T$. Suppose now that $n>0$, i.e., that $\Inv_{\bf H}(S)\subsetneq \Inv_{\bf H}(T)$. By Lemma \ref{s13}, there exists a construct $U:{\bf H}$ such that $U\lessdot^+ T$, $\Inv_{\bf H}(S)\subseteq \Inv_{\bf H}(U)\subsetneq \Inv_{\bf H}(T)$ and $\Invt_{\bf H}(S)\subseteq \Invt_{\bf H}(U)\cup \Inv_{\bf H}(U)$, allowing us to conclude once again that $S\leq U$ and hence $S\leq U\lessdot^+ T$. 
\end{proof}

\begin{remark} We note that Theorem \ref{gfo-inv} provides an alternative proof of the absence of cycles in the GFO, i.e., of Proposition \ref{lessdot-order} (in the right-filled case). Indeed, if $S\leq T$ and $T\leq S$, then we get immediately $\Inv_{\bf H}(S)= \Inv_{\bf H}(T)$, and the second inequalities reduce then to $\Invt_{\bf H}(S)\inc \Invt_{\bf H}(T)$ and conversely, and we conclude by Lemma \ref{eee}.
\end{remark}

\subsection{Generalised flip order and restriction}

In this section, we establish a commutation between the GFO and restriction that we shall need in Section \ref{shuffle-order-section}. Again, we make use of the hereditarily ordered setting (cf. Definition \ref{hereditarily-ordered-def}).

\begin{lemma} \label{restriction-order-comm}
Let ${\bf H}$ be a connected hypergraph, let $K\subseteq H$ be connected in ${\bf H}$, and let $S,T:{\bf H}$. If $S\lessdot T$, then $\restrconstr{S}{{\bf K}} \leq \restrconstr{T}{{\bf K}}$.
\end{lemma}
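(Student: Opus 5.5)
We proceed by induction on $|H|$, following the recursive structure of the restriction (Definition \ref{restriction-definition}), and by a case analysis on the kind of the covering $S\lessdot T$: non-root covering, root fusion or root split (Remark \ref{lem:StabilityByContext}(a)). The aim is to show that $\restrconstr{S}{{\bf K}}$ and $\restrconstr{T}{{\bf K}}$ are either equal or related by a single covering. If $S$ and $T$ have the same root $X$, we take the ordering of children in the restriction via the index correspondence $\psi$.

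\emph{Non-root covering.} Here $S=X(S_1,\ldots,S_n)$ and $T=X(T_1,\ldots,T_n)$, with $S_i\lessdot T_i$ for one $i$ and $S_j=T_j$ for $j\neq i$. If $X\cap K=\emptyset$, then $K\inc H_k$ for some $k$, and we apply the induction hypothesis to $S_k,T_k$ (restriction to ${\bf K}$), or note equality when $k\neq i$. Otherwise both restrictions have root $X\cap K$ and the same components $K_1,\ldots,K_p$. The children are $\restrconstr{(S_{\psi(l)})}{{\bf K}_l}$ and $\restrconstr{(T_{\psi(l)})}{{\bf K}_l}$. By induction, each $K_l$ with $\psi(l)=i$ gives $\leq$, and the others give equality. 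Remark \ref{lem:StabilityByContext}(b) then assembles these into $\restrconstr{S}{{\bf K}}\leq\restrconstr{T}{{\bf K}}$. We use that $K_l$ is connected in $H_{\psi(l)}$, so that the induction hypothesis applies.

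\emph{Root fusion and root split.} These are dual, since we are comparing $X(\ldots U(V(\ldots))\ldots)$ with $(U\cup V)(\ldots)$. The key computation is to restrict both to ${\bf K}$ through Definition \ref{restriction-definition} and Remark \ref{rest1}. Take the root fusion $S=U(\ldots,V(R_1,\ldots,R_m),\ldots)$, $T=(U\cup V)(R_1,\ldots,R_m,\ldots)$ with $U>V$. We split on whether $U\cap K$ and $V\cap K$ are empty.
\begin{itemize}
\item If $U\cap K=\emptyset$ and $V\cap K=\emptyset$, then $K$ lies in one subtree of both, and the two restrictions coincide.
\item If exactly one of $U\cap K$, $V\cap K$ is nonempty, the restrictions are again literally equal. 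The point is that the connected components of ${\bf K}$ minus $(U\cup V)\cap K$ are exactly the components occurring one level lower in $S$.
\item If both are nonempty, we get $\restrconstr{S}{{\bf K}}=(U\cap K)(\ldots,(V\cap K)(\ldots),\ldots)$ and $\restrconstr{T}{{\bf K}}=((U\cup V)\cap K)(\ldots)$. The former may have \emph{several} children of the form $(V\cap K)^{l}$, because $V\cap K$ can disconnect inside ${\bf K}\backslash(U\cap K)$.
\end{itemize}
In the last subcase, $\restrconstr{T}{{\bf K}}$ is obtained from $\restrconstr{S}{{\bf K}}$ by successively fusing each block $(V\cap K)^l$ into the root. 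Every such block is $<U\cap K$, so each step is a fusion covering. This sequence of fusions is why the lemma only claims $\leq$ and not $\lessdot$. The root split is the reverse: $\restrconstr{S}{{\bf K}}$ has root $(U\cup V)\cap K$, and $\restrconstr{T}{{\bf K}}$ has root $U\cap K$ with children $(V\cap K)^l$. This is precisely the squashing $\restrconstr{S}{{\bf K}}[(U\cap K)\langle V\cap K\rangle/((U\cup V)\cap K)]$. As in the proof of Lemma \ref{squashing-lemma}, it is reached by iterated elementary splits, splitting off the pieces $(V\cap K)^l$ one at a time, each in its own connected component. Each split satisfies the order condition because $U\cap K<(V\cap K)^l$. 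The ordering of the pieces does not need hereditary ordering here, since we only need the existence of a split sequence. The same $\leq$ (not $\lessdot$) remark applies.

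The main obstacle is the bookkeeping in the root cases. We must check carefully that the subtrees hanging below the affected nodes restrict in the same way on both sides, i.e.\ that the index correspondences $\psi$ for $S$ and for $T$ match up componentwise. This should follow from the uniqueness of the decomposition into connected components, in the spirit of Lemma \ref{twonodes}. We must also check that each piece $(V\cap K)^l$ is splittable at the stage where it is split off. We would first verify both points in the subcase where $V\cap K$ stays connected, giving a single covering, and then extend to several pieces.
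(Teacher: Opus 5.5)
Your architecture matches the paper's: induction on $|H|$, the three kinds of covering, the emptiness split on the two halves of the affected node, equality in the degenerate subcases, and a cascade of elementary coverings in the main subcase. Your equality claims in the ``exactly one nonempty'' subcases are also correct. The gap is the order condition in the cascade. ``Every block is $<U\cap K$'' justifies only the \emph{first} step. After fusing a block $B$ into the root, the root is $(U\cap K)\cup B$ with minimum $\min(B)$, so a further block $B'$ can be fused only if $B'<B$. Dually, splitting a block $B$ off the current root $R$ requires $R\setminus B<B$, and $R\setminus B$ still contains the blocks not yet split off. So you need the blocks $V\cap K_l$ to be pairwise separated, $B^1<\cdots<B^q$, and you must process them from the largest to the smallest. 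This is exactly what the hereditarily ordered setting announced at the start of that subsection provides. The components $K_1<\cdots<K_p$ of ${\bf K}\backslash(U\cap K)$ are ordered, and ${\bf K}$ inherits the property because ${\bf K}\backslash Y={\bf H}\backslash((H\setminus K)\cup Y)$. This is the paper's condition ``$\max(K^1_i)<\min(K^1_{i+1})$'' together with its left-to-right and right-to-left cascades. It is also a hypothesis of Lemma \ref{squashing-lemma}, which you invoke.

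Your remark that the pieces ``do not need hereditary ordering'' is false, and without that hypothesis the lemma itself fails. Take the graph on $\{1,\ldots,6\}$ with edges $\{1,2\},\{1,3\},\{2,4\},\{3,5\},\{2,6\},\{3,6\}$, and let $K=\{1,\ldots,5\}$. Since ${\bf H}\backslash\{1\}$ is connected, $S=\{1,2,3,4,5\}(6)\lessdot T=1(\{2,3,4,5\}(6))$ is a root split. Its restrictions are $\restrconstr{S}{{\bf K}}=\{1,2,3,4,5\}$ and $\restrconstr{T}{{\bf K}}=1(\{2,4\},\{3,5\})$, whose blocks are interleaved. In ${\bf K}$ (the path $4-2-1-3-5$), the up-set of $\{1,2,3,4,5\}$ is the chain $\{1,2,3,4,5\}\lessdot\{1,2,3,4\}(5)\lessdot\{1,2,3\}(4,5)\lessdot\{1,2\}(3(5),4)\lessdot 1(2(4),3(5))$, which does not contain $\restrconstr{T}{{\bf K}}$. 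The root fusion case breaks the same way. With edges $\{1,5\},\{2,5\},\{1,3\},\{2,4\},\{1,6\},\{2,6\}$, the root fusion $5(\{1,2,3,4\}(6))\lessdot\{1,2,3,4,5\}(6)$ restricts to $5(\{1,3\},\{2,4\})\not\leq\{1,2,3,4,5\}$. The connectivity part of splittability, which you flagged as the obstacle, is actually the harmless part. What you must add is the ordering hypothesis, and you must run the cascade in the correct direction.
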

\begin{proof}
We prove the claim by induction on $|H|$. The base case, given by $|H|=1$, is trivial. Suppose that $\hyper{H},X\leadsto H_1,\ldots,H_n$ and that $S=X(S_1,\ldots,S_n)$, where $S_i:{\bf H}_i$. 
We distinguish cases depending on the nature of the covering $S\lessdot T$ (see Remark \ref{lem:StabilityByContext}(a)).

 \begin{itemize}
\item Case 1: $S\lessdot T$ is a non-root covering, i.e., there exists an index $1\leq j\leq n$ such that $S_j\lessdot S'_j$ and $T=X(S_1,\ldots,S_{j-1},S'_j,S_{j+1},\ldots,S_n)$. We then further distinguish cases depending on the distribution of $K$ in $S$ (and $T$).
\begin{itemize}
\item If $X\cap K=\emptyset$ and $K\inc H_k$ with $k\neq j$, then, by Definition \ref{restriction-definition}, we have that $\restrconstr{S}{{\bf K}}=\restrconstr{S_k}{{\bf K}}=
\restrconstr{T}{{\bf K}}$.
\item If $X\cap K=\emptyset$ and $K\inc H_j$, we conclude by induction that $\restrconstr{S}{{\bf K}}=\restrconstr{S_j}{{\bf K}}\leq \restrconstr{S'_j}{{\bf K}}=
\restrconstr{T}{{\bf K}}$.
\item If $X\cap K\neq\emptyset$, then, by Definition \ref{restriction-definition}, we have that
$$\restrconstr{S}{{\bf K}} =(X\cap K)(\ldots,\restrconstr{(S_{\varphi_X^{\hyper{K},\hyper{H}}(i)})}{\hyper{K}_i},\ldots),$$
where $\hyper{K},X \leadsto K_1,\ldots, K_p$ and $\varphi_X^{\hyper{K},\hyper{H}}:\set{1,\ldots,p}\rightarrow \set{1,\ldots,n}$. Likewise, $\restrconstr{S'}{{\bf K}}$ is the same expression where 
$\restrconstr{(S_{\varphi_X^{\hyper{K},\hyper{H}}(i)})}{\hyper{K}_i}$ is replaced by $\restrconstr{S'_j}{{\bf K}_i}$, 
for all $i$ such that $\varphi_X^{\hyper{K},\hyper{H}}(i)=j$. Therefore, using the fact that for all those $i$ we have, by induction, that
$\restrconstr{(S_j)}{\hyper{K}_i}\lessdot \restrconstr{(S'_j)}{\hyper{K}_i}$, we conclude that
$\restrconstr{S}{{\bf K}} \leq \restrconstr{T}{{\bf K}}$.
\end{itemize}
\item Case 2: $S\lessdot T$ is a root split, i.e., there exists a decomposition $X=X_1\cup X_2$ such that $X_1<X_2$ and
$T=X_1(S_1,\ldots ,S_j,X_2(S_{j+1},\ldots ,S_l),S_{l+1},\ldots,S_n)$. We proceed again by analysing the distribution of $K$ in $S$ (and $T$).
\begin{itemize}
\item If $X\cap K=\emptyset$ and $K\inc H_k$ for any $k$, then, by Definition \ref{restriction-definition}, we have that $\restrconstr{S}{{\bf K}}=\restrconstr{S_k}{{\bf K}}=
\restrconstr{T}{{\bf K}}$.
\item If $X_1\cap K=\emptyset$ and $X_2\cap K\neq\emptyset$, then $K$ is disjoint from all $H_i$ other than $H_{j+1},\ldots,H_l$. Setting $U=X_2(S_{j+1},\ldots, S_l)$, we easily conclude that $\restrconstr{S}{{\bf K}}=\restrconstr{U}{{\bf K}}$, and hence that $\restrconstr{S}{{\bf K}}= \restrconstr{T}{{\bf K}}$.
\item If $X_1\cap K\neq \emptyset$ and $X_2 \cap K\neq \emptyset$, then, with $U$ as above and $\hyper{K},X_1\leadsto K^1_1,\ldots K^1_t$,
$\restrconstr{T}{{\bf K}}$ has the following shape:
$$\begin{array}{lll}
\restrconstr{T}{{\bf K}}& = & (X_1\cap K)(\ldots,\restrconstr{U}{{\bf K}^1_1},\ldots,\restrconstr{U}{{\bf K}^1_t},\ldots),\ldots)\\
& =& (X_1\cap K)(\ldots,(X_2\cap K^1_1)(\ldots),\ldots, (X_2\cap K^1_t)(\ldots),\ldots)
\end{array}$$
where $X\cap K= (X_1\cap K)\cup (X_2\cap K^1_1)\cup\cdots\cup(X_2\cap K^1_t)$ and $\max(K^1_i) < \min(K^1_{i+1})$.
Therefore, starting from $\restrconstr{T}{{\bf K}}$, we can perform a cascade of fusions, from left to right, resulting in $\restrconstr{S}{{\bf K}}$, i.e.,
\begin{align}
\restrconstr{S}{{\bf K}} & \leq ((X_1\cap K)\cup(X_2\cap K^1_1))(\ldots,(X_2\cap K^1_{i})(\ldots),\ldots) \\
& \lessdot \restrconstr{T}{{\bf K}}.
\end{align}
In other words, using the squashing notation of Definition \ref{squashing-notation}, we have $ \restrconstr{T}{{\bf K}}= (\restrconstr{S}{{\bf K}})[[(X_1\cap K)\langle (X_2\cap K)\rangle/(X\cap K)]$.
\item If $X_1\cap K\neq \emptyset$ and $X_2 \cap K=\emptyset$, then $X\cap K=X_1\cap K$. Moreover, referring to the notation used in the previous case, each of $\restrconstr{U}{{\bf K}^1_1},\ldots \restrconstr{U}{{\bf K}^1_t}$ has the form $\restrconstr{(S_{m_1})}{{\bf K}^1_1},\ldots \restrconstr{(S_{m_t})}{{\bf K}^1_t}$, respectively (with $j+1\leq m_1\leq\cdots\leq m_t\leq l$). One can conclude easily from these observations that $\restrconstr{S}{{\bf K}}\leq\restrconstr{T}{{\bf K}}$.
\end{itemize}
\item Case 3: $S\lessdot T$ is a root fusion, i.e., 
$$S=X(S_1,\ldots, S_{j-1},Y(S_{j_1},\ldots, S_{j_l}),S_{j+1},\ldots,S_n), \mbox{ with } X>Y,$$ and
$$T=(X\cup Y)(S_1,\ldots, S_{j-1},S_{j_1},\ldots, S_{j_l},S_{j+1},\ldots,S_n).$$ This case is treated much like the previous one and is left to the reader (most notably, in the relevant subcase, the cascade of fusions starts now from $\restrconstr{S}{{\bf K}}$ and goes from right to left).
\end{itemize}
\end{proof}

\section{Shuffle product on the faces of nestohedra} \label{strict-reminders-section}
In this section, we recall the shuffle product on constructs introduced in \cite{PLBJ1}. Our purpose here is mainly preparatory: we collect the definitions and results from \cite{PLBJ1} that will be needed in the next section, where we reinterpret the relevant operations in terms of intervals of the generalised flip order introduced in Section 2.

\subsection{Combinatorial framework}

The characterisation of the shuffle product in terms of the GFO requires an ordered combinatorial framework that we now establish, by recalling (the ordered variants of) the notions of universe, preteam,
strict team and strict clan from \cite{PLBJ1}. 

\smallskip

 We first fix a {\em universe}, i.e., a collection $\mathfrak{U}$ of connected hypergraphs. 
\begin{definition}\label{ost}

A {\em preteam} (in $\mathfrak{U}$) is a pair $\tau=(\{\hyper{H}_a\,|\, a\in A\},\hyper{H})$ with the following properties:
\begin{itemize}
\item[(1)] ${\bf H}_a\in \mathfrak{U}$ for each $a\in A$ and ${\bf H}\in {\mathfrak{U}}$, and
\item[(2)] the sets of vertices of the hypergraphs ${\bf H}_a$, $a\in A$, are mutually disjoint and $H=\Union_{a\in A}H_a$.
\end{itemize}
A preteam as above is called a {\em strict team} if, in addition, it satisfies the following property:
\begin{itemize}
\item[(S)] for every $a\in A$ and every connected subset $\emptyset\neq K\inc H_a$, we have that $K$ is also connected in $\hyper{H}$. 
\end{itemize} The hypergraphs ${\bf H}_a$, $a\in A$, are called the {\em participating hypergraphs} of $\tau$, and ${\bf H}$ is called the coordinating hypergraph of $\tau$.
\end{definition}

 In the ordered framework, the universe $\mathfrak{U}$ is required to be {\em $\mathbb{Z}$-ordered} in the sense that, for all $\hyper{H}\in\mathfrak{U}$, $H\inc\mathbb{Z}$ and $\hyper{H}$ (endowed with the order induced by 
$\mathbb{Z}$) is hereditarily ordered. The ordered variant of the notion of strict team is then obtained as follows. 

\begin{definition}\label{orderedteam}
 An ordered strict team (in a $\mathbb{Z}$-ordered universe ${\mathfrak U}$) is a pair $\tau=(({\bf H}_1,\dots,{\bf H}_p),\hyper{H})$ such that $(\{{\bf H}_1,\dots,{\bf H}_p\},\hyper{H})$ is a strict team and $H_1<\cdots<H_p$. 
\end{definition}

\begin{example}
The pair
\begin{equation*}
\left( \left(\raisebox{-1.5pt}{\resizebox{2cm}{!}{\begin{tikzpicture}[scale=0.5,baseline=-5pt]
\node (1) at (0,0) {$1$};
\node[right=0.5cm of 1] (2) {$2$};
\node[right=0.5cm of 2] (3) {$3$};
\begin{scope}{background}
\node[draw,fit=(1)(2)(3), rounded corners=10pt] {};
\end{scope}
\end{tikzpicture}}}, \raisebox{-3.2pt}{\begin{tikzpicture}\node at (0,0){\footnotesize{4}}; 
\end{tikzpicture}}\right) ,
\raisebox{-1pt}{\resizebox{2.55cm}{!}{\begin{tikzpicture}[scale=0.5,baseline=-5pt]
\node (1) at (0,0) {$1$};
\node[right=0.5cm of 1] (2) {$2$};
\node[right=0.5cm of 2] (3) {$3$};
\node[right=0.5cm of 3] (4) {$4$};
\draw (1)--(2)--(3)--(4);
\end{tikzpicture}}}
 \right)
\end{equation*}
is an ordered strict team. The pair
\begin{equation*}
\left( \left( \raisebox{-3.2pt}{\begin{tikzpicture}\node at (0,0){\footnotesize{1}}; 
\end{tikzpicture}}, \raisebox{-1.5pt}{\resizebox{2cm}{!}{\begin{tikzpicture}[scale=0.5,baseline=-5pt]
\node (1) at (0,0) {$2$};
\node[right=0.5cm of 1] (2) {$3$};
\node[right=0.5cm of 2] (3) {$4$};
\draw (1)--(2);
\begin{scope}{background}
\node[draw,fit=(1)(2)(3), rounded corners=10pt] {};
\end{scope}
\end{tikzpicture}}} \right) ,
\raisebox{-1.5pt}{\resizebox{3cm}{!}{\begin{tikzpicture}[scale=0.5,baseline=-5pt]
\node (1) at (0,0) {$1$};
\node[right=0.5cm of 1] (2) {$2$};
\node[right=0.5cm of 2] (3) {$3$};
\node[right=0.5cm of 3] (4) {$4$};
\draw (1)--(2);
\begin{scope}{background}
\node[draw,fit=(1)(2)(3), rounded corners=10pt] {};
\node[draw,fit=(1)(2)(3)(4), rounded corners=10pt, inner sep=6pt] {};
\end{scope}
\end{tikzpicture}}}
 \right)
\end{equation*}
is not a strict team as it fails to satisfy the property (S) of Definition \ref{ost}.
\end{example}

In order to define our products recursively, we shall need the following apparatus and closure condition on our strict teams. Given a strict team $\tau=(\{\hyper{H}_a\,|\, a\in A\},\hyper{H})$, 
pick a subset $\emptyset\neq B\subseteq A$ and for each $b\in B$ a subset $\emptyset\neq X_b\subseteq H_b$, and
consider the decompositions
\begin{equation}\begin{array}{l}\hyper{H}_b, X_b \leadsto H_{(b,1)},\ldots ,H_{(b,n_b)}\\
\hyper{H},X_B\leadsto H_1^B,\ldots,H_{n_B}^B, \textrm{where}\; X_B=\Union_{b\in B}X_b.
\end{array}\label{decomp}
\end{equation}
We set
 $\tilde{A}:=(A\backslash B)\union\setc{(b,i)}{b\in B, 1\leq i\leq n_b}$.
 The property (S) of Definition \ref{ost} implies that for each ${\tilde a\in\tilde A}$, $H_{\tilde a}$ is contained in $H_i^B$ for some $1\leq i\leq n_B$.
 This leads naturally to the following induced preteams:
$$\tau_i^B=(\setc{\hyper{H}_{\tilde a}}{\tilde a\in\tilde A\;\textrm{and}\; H_{\tilde a}\inc H_i^B},\hyper{H}_i^B)\quad\quad(1\leq i\leq n_B).$$
We write $\tau, X_B \clandec \tau^B_1,\ldots,\tau^B_{n_B}$.
 Note that when $\tau$ is ordered, then each $\tau^B_i$ is ordered as well.

\begin{definition}
A strict clan (in $\mathfrak{U}$) is a set $\Xi$ of strict teams (in $\mathfrak{U}$) which is closed under decomposition, i.e., which is such that for each $\tau\in\Xi$ and each decomposition $\tau,X_B \clandec \tau^B_1,\ldots,\tau^B_{n_B}$ as in (\ref{decomp}), we have that $\tau_i\in\Xi$ for all $1\leq i\leq n_B$. 
When the teams of the clan are all ordered, the strict clan is an ordered strict clan.
\end{definition}

\begin{definition}
A strict clan $\Xi$ is \emph{associative} if for all $\tau = (\setc{\hyper{H}_a}{a\in A},\hyper{H})\in \Xi$, $a_0\in A$ and
$\tau' = (\setc{\hyper{H}_{(a_0,a')}}{a'\in A'},\hyper{H}_{a_0}) \in\Xi$, we have that
$$\tau'':=(\{{\bf H}_{a}\,|\,a\in A\backslash \{a_0\}\}\cup \{\hyper{H}_{(a_0,a')}\,|\,a'\in A'\},{\bf H})\in\Xi.$$ We refer to $\tau''$ as the grafting of $\tau'$ to $\tau$ along $a_0$.
\end{definition}

\begin{example}
Teleassociahedra (and, in particular, permutohedra and associahedra) and quasi-associahedra form ordered strict associative clans. The details for permutohedra and associahedra can be easily worked out (or found in \cite[Section 4]{PLBJ1}). The details for teleassociahedra in general are given in \cite[Example 4.3]{PLBJ2}. The strict clan associated with quasi-associahedra is obtained as follows. As universe, we fix the class of all quasi-associahedra. The corresponding preteams have the form $$(({\bf Q}^{X_1},\dots,{\bf Q}^{X_p}),{\bf Q}^{\bigcup X_i}),$$ where the $X_1<\cdots <X_p$. In order to see that such a preteam is a strict team, we note that, by definition, each hyperedge of each participating hypergraph ${\bf Q}^{X_i}$ is present in ${\bf Q}^{{\bigcup X_i}}$. We define the clan of quasi-associahedra as the set of all teams
defined as above. Clearly, this set of teams is closed under decomposition and is associative.
\end{example}
\begin{remark}
Simplices and hypercubes do not form strict clans. In fact, preteams formed by members of these respective families (as well as other families of polytopes, such as cyclohedra) satisfy a connectedness condition different from (S), which we address in \cite{PLBJ2}.
\end{remark}

 \subsection{The shuffle product} \label{spr}
 Let $\Xi$ be a strict associative clan and let $q\in{\mathbb R}$. 

\begin{definition}
A $\Xi$-\emph{delegation} is a pair 
$$\delta=(\{C_a:{\bf H}_a\,|\, a\in A\},{\bf H}) \quad\mbox{
such that}\quad \tau:=(\setc{\hyper{H}_a}{a\in A},\hyper{H})\in \Xi.$$
\end{definition}

 Observe that, for $\emptyset \neq B\subseteq A$ and $\tilde{A}$ as above, assuming that $X_a=\mbox{root}(C_a)$ for each $a\in A$, there is a canonical association of a construct $C_{\tilde{a}}$ to each $\tilde{a}\in \tilde{A}$, which gives rise to delegations \begin{equation}\label{deltas}\delta^B_i=(\{C_{\tilde{a}}:{\bf H}_{\tilde{a}}\,|\, \tilde{a}\in \tilde{A} \mbox{ and }\varphi^B_\tau(\tilde{a})=i\},{\bf H}_i^B),\end{equation} for $1\leq i\leq n_B$. More precisely, for $b\in B$, we set $C_b=X_b(C_{(b,1)},\ldots,C_{(b,n_b)})$ with $C_{(b,i)}:\hyper{H}_{(b,i)}$. All of these induced delegations feature in the definition of shuffle product that we are about to give.
 
\begin{definition}\label{shp}
 The {\it shuffle product} (or product) of a $\Xi$-\emph{delegation} $\delta=(\{C_a:{\bf H}_a\,|\, a\in A\},{\bf H})$ is defined as follows:
\begin{equation} \label{shuffle-antistrict-def} 
\ast(\delta)= \sum_{\emptyset\incs B\subseteq A} q^{{|B|}-1}\ast_B(\delta), \mbox{ where } \ast_B(\delta)=(\Union_{b\in B}X_b)(\ast(\delta^B_1),\dots,\ast(\delta^B_{n_{B}})), 
\end{equation}
and where the delegations $\delta_i^B$, for $1\leq i\leq n_B$, are induced as in \eqref{deltas}.
\end{definition}
 Note that, in Definition \ref{shp}, $\ast(\delta)$ is a linear combination of constructs, living in the vector space spanned by the constructs of $\hyper{H}$. The associativity of this product is proven in \cite[Theorem 4.17]{PLBJ1} (see also Proposition \ref{associativity-diag} below). 
 
\medskip

The following proposition yields an alternative characterisation of the operations $\ast$ and $\ast_B$. We associate with $U$ a ``measure'' $\mu^\tau(U)$ as follows (with the notation as above). We set $B=\setc{b\in A}{X\cap H_b\neq\emptyset}$ and $X_b=X\cap H_b$ for each $b\in B$ (so that $n=n_B$), and we set
$$\mu^\tau(U)=(|B|-1)+ \sum_{1\leq i\leq n_B}\mu^{\tau_i}(U_i).$$
\begin{proposition}[\cite{PLBJ1},Proposition 4.24] \label{restriction-product-characterisation} Let $\Xi$ be a strict clan, and
let $\delta=(\{C_a:{\bf H}_a\,|\, a\in A\},{\bf H})$ be a delegation of support $\tau$. Then we have:
$$\begin{array}{lll} 
\ast(\delta)& = & \displaystyle\sum_{\substack {U:\hyper{H} \\[0.1cm] \forall a\in A, \restrconstr{U}{\hyper{H}_a}=C_a}} q^{\mu^\tau(U)}\, U,
\end{array}$$
and, for each $\emptyset\neq B\inc A$, writing $X_b=\textrm{root}(C_b)$ for $b\in B$ and $X_B=\cup_{b\in B}X_b$, we have 
$$\begin{array}{lll} 
\ast_B(\delta)& = &\displaystyle \sum_{\substack{U:\hyper{H},\; \textrm{root}(U)= X_B \\[0.1cm] \forall a\in A, \restrconstr{U}{\hyper{H}_a}=C_a}} \, q^{\mu^\tau(U)-|B|+1} U.
\end{array}$$
\end{proposition}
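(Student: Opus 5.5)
The proof will go by strong induction on $|H|$ (equivalently, on the size of the delegation), unfolding the recursive Definition \ref{shp} once and comparing it term by term with the right-hand sides of the two claimed formulas. The two formulas are handled simultaneously: the formula for $\ast$ follows from the one for each $\ast_B$ by summing over $B$. This works because every construct $U:\hyper{H}$ whose restrictions are the $C_a$ has a root $X$, and the set $B=\setc{b\in A}{X\cap H_b\neq\emptyset}$ is then determined by $U$. The factor $q^{|B|-1}$ combines with $q^{\mu^\tau(U)-|B|+1}$ to give $q^{\mu^\tau(U)}$. So the real content is the description of $\ast_B(\delta)$.

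For fixed $B$, I will first establish a bijection. On one side are the constructs $U=X_B(U_1,\ldots,U_{n_B})$ of $\hyper{H}$ with $\restrconstr{U}{\hyper{H}_a}=C_a$ for all $a\in A$. On the other side are the tuples $(U_1,\ldots,U_{n_B})$ with $U_i:\hyper{H}^B_i$ and $\restrconstr{U_i}{\hyper{H}_{\tilde a}}=C_{\tilde a}$ for every $\tilde a$ belonging to the induced delegation $\delta^B_i$.

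This bijection is obtained by unfolding Definition \ref{restriction-definition}, and I will check the two kinds of participants separately.

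\textbf{Participants $a\notin B$.} Here $X_B\cap H_a=\emptyset$, so $\restrconstr{U}{\hyper{H}_a}=\restrconstr{U_i}{\hyper{H}_a}$, where $H_a\inc H^B_i$. Property (S) guarantees that $H_a$ lies in a single component.

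\textbf{Participants $b\in B$.} Here $X_B\cap H_b=X_b$, and the restriction becomes $X_b(\ldots,\restrconstr{U_{\psi(j)}}{\hyper{H}_{(b,j)}},\ldots)$. This equals $C_b=X_b(C_{(b,1)},\ldots)$ exactly when $\restrconstr{U_{\psi(j)}}{\hyper{H}_{(b,j)}}=C_{(b,j)}$ for each $j$. The fact that each $H_{(b,j)}$ sits inside a single $H^B_i$ is again supplied by (S), via the remark preceding the definition of $\tau^B_i$.

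Conversely, the condition that the root of $U$ restricted to each $H_b$ equals $X_b$ forces $\textrm{root}(U)=X_B$, because the root of $U$ is $X_B$ on both sides of the correspondence.

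Next, I apply the induction hypothesis to each $\ast(\delta^B_i)$, which is legitimate because $|H^B_i|<|H|$ and $\tau^B_i\in\Xi$ by closure under decomposition. This gives $\ast(\delta^B_i)=\sum q^{\mu^{\tau_i}(U_i)}U_i$ over the admissible $U_i$. Grafting is multilinear, so $\ast_B(\delta)=X_B(\ast(\delta^B_1),\ldots,\ast(\delta^B_{n_B}))$ expands as a sum over tuples with weight $q^{\sum_i\mu^{\tau_i}(U_i)}$. By definition of the measure, $\mu^\tau(U)=(|B|-1)+\sum_i\mu^{\tau_i}(U_i)$, so this weight equals $q^{\mu^\tau(U)-|B|+1}$, as required. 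The base case, where $U$ is a single node, is the instance $n_B=0$ of the same computation.

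The main obstacle is the bijection step, specifically the second direction. There one must show that a construct $U$ with the correct restrictions has its root-level decomposition matching the induced teams $\tau^B_i$. Concretely, the index correspondence $\psi$ from Definition \ref{restriction-definition} must coincide with the canonical map $\varphi^B_\tau$ used to form the $\delta^B_i$. This has to be checked carefully, using Remark \ref{rest1} together with property (S).
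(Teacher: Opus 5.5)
Your proof is correct. This paper does not prove the proposition itself: it quotes it from \cite{PLBJ1} and uses it only as input for Theorem~\ref{interval-product}. So there is no in-paper argument to compare against. Your strong induction on $|H|$ is a natural self-contained proof: unfold Definition~\ref{shp} against Definition~\ref{restriction-definition} for a fixed $B$, then sum over $B$.

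Two small remarks. First, the step you single out as the main obstacle is immediate. By definition, $\psi(j)$ and $\varphi^B_\tau((b,j))$ are both the unique index $i$ with $H_{(b,j)}\subseteq H^B_i$. Existence and uniqueness come from (S) together with $H_{(b,j)}\cap X_B=\emptyset$, so Remark~\ref{rest1} plays no role.

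Second, state explicitly that $\restrconstr{U}{\hyper{H}_b}=C_b$ forces $\mathrm{root}(U)\cap H_b=\mathrm{root}(C_b)$ whenever this intersection is non-empty. This is what makes the decomposition of the sum over $B$ exhaustive and disjoint. It also makes the teams in the recursive definition of $\mu^\tau(U)$ coincide with the $\tau^B_i$. It should replace the slightly circular sentence about the root being $X_B$ ``on both sides of the correspondence''.
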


\begin{remark} \label{ordered-delegation}
From now on, whenever a delegation $\delta$ is viewed in the ordered setting, we shall write it as an ordered tuple of constructs $\delta=((C_1:{\bf H}_1,\dots,C_p:{\bf H}_p),{\bf H})$ (rather than as a set of constructs). As in Definition \ref{orderedteam}, this notation will imply that $H_1<\cdots < H_p$, allowing us to refer unambiguously to the leftmost and rightmost participating construct of $\delta$. 
\end{remark}

\begin{remark}\label{rembin}
In the ordered setting, the definition of the shuffle product can be spelled out more concretely. We illustrate this here for a binary team $((\hyper{H}_1,\hyper{H}_2), \hyper{H})$.
 
\smallskip

Consider a delegation $\delta=((S:\hyper{H}_1, T:\hyper{H}_2), \hyper{H})$, where the constructs $S$ and $T$ are given by $S=X(S_1, \ldots ,S_k)$ and $T=Y(T_1, \ldots, T_l)$.
The decompositions associated with the product $\ast(S,T)$, denoted more conventionally by $S \ast T$, are given by:
\begin{align}\label{NotIJ}
 &\hyper{H}_1, X \leadsto H_{(1,1)}, \ldots, H_{(1,k)}, & &\hyper{H}, X\leadsto H'_{1}, \ldots, H'_{n-1}, H'_n, \\
 &\hyper{H}_2, Y \leadsto H_{(2,1)}, \ldots, H_{(2,l)},& 
 &\hyper{H}, Y \leadsto H''_{n}, H''_{n+1} \ldots, H''_{n+m}.
 \end{align}
Because the underlying preteams are strict and $H_1<H_2$, it follows that ${H}_2 \subseteq H'_n$. Consequently, there exists an interval partition (i.e., a partition whose blocks are contiguous intervals) $I_1, \ldots, I_n$ of $\llbracket 1, k\rrbracket$, where $I_n$ may be empty, such that:
\begin{itemize}
\item $I_j<I_{j+1}$, for all $1 \leq j \leq n-1$ (when non-empty),
\item $H'_j=\bigcup_{i \in I_j} H_{(1,i)}$ for $1 \leq j \leq n-1$, and
\item $H'_n=\left(\bigcup_{i \in I_n} H_{(1,i)}\right) \cup H_2$. 
\end{itemize}
By symmetric reasoning, we infer ${H}_1 \subseteq {H}''_n$, implying the existence of an interval partition $J_1, \ldots, J_m$ of $\llbracket 1, l\rrbracket$, where $J_1$ may be empty, such that 
\begin{itemize}
\item $J_j<J_{j+1}$, for all $1 \leq j \leq m-1$ (when non-empty),
\item $H''_{n+j}=\bigcup_{i \in J_j} H_{(2,i)}$ for $2 \leq j \leq m$, and 
\item $H''_n=H_1\cup \left(\bigcup_{i \in J_1} H_{(2,i)}\right)$. 
\end{itemize}
Since $H_2 \subseteq H'_n$, removing $X\cup Y$ from $\hyper{H}$ is the same as first removing $X$ from $\hyper{H}$ and then removing $Y$ from $\hyper{H}'_n$. Thus, supposing that the latter decomposition is given by $\hyper{H}'_n, Y \leadsto H'_{(n,1)}, \ldots, H'_{(n,p)}$, we get the decomposition 
$$\hyper{H}, X\cup Y \leadsto H'_{1}, \ldots, H'_{n-1}, H'_{(n,1)}, \ldots, H'_{(n,p)}.$$
Then the definition given above for $\ast(\delta)$ spells out as
\begin{equation} \label{shuffle-antistrict-def} 
 S \ast T = S \ast_{\set{1}} T+S \ast_{\set{2}} T+q \times S \ast_{\set{1,2}} T 
\end{equation}
where
$$\begin{array}{rcl}
 S \ast_{\set{1}} T &= & X(\ast_{i \in I_1} S_{i}, \ldots, \left( \ast_{i \in I_n} S_{i} \right) \ast T ) \\
S \ast_{\set{2}} T &= & Y(S \ast \left(\ast_{i \in J_1} T_{i} \right), \ldots, \ast_{i \in J_m} T_{i} )\\
S \ast_{\set{1,2}} T &=& X \cup Y \left(\ast_{i \in I_1} S_{i}, \ldots, \left( \ast_{i \in I_n} S_{i}\right) \ast \left( \ast_{i \in J_1} T_{i}\right), \ldots, \ast_{i \in J_m} T_{i} \right).
\end{array}$$ 
Here, $q \in \mathbb{R}$ is some fixed parameter, and for any interval $I=\llbracket a,b\rrbracket$, $\ast_{i\in I} S_i$ stands for $\ast(S_a,S_{a+1},\ldots,S_b)$.
\end{remark}

\section{Shuffle product via generalised flip order} \label{shuffle-order-section}

The purpose of this section is to show that, in the ordered framework, the shuffle product
recalled in Section \ref{spr} admits a description in terms of sums over suitable intervals of the generalised
flip order. As indicated in the introduction, this extends the interval descriptions of products for associahedra and permutohedra, given by
Palacios and Ronco in \cite{PalaciosRonco}, to the setting
of arbitrary (ordered strict associative clans of) nestohedra.

 We start by introducing extremal constructs
associated with an ordered delegation $\delta$, namely the constructs
$\backslash(\delta)$ and $\slash(\delta)$, together with their
variants indexed by a subset $B$.
These constructs will turn out to be the endpoints of the intervals
appearing in the final formulas.

In what follows, $\Xi$ will denote an ordered strict associative clan.
 
\begin{definition} 
Given a $\Xi$-delegation $\delta=((C_1:{\bf H}_1,\dots, C_p:{\bf H}_p),{\bf H})$, we define the constructs $\diagdown (\delta),\diagup (\delta):{\bf H}$ recursively as follows:
$$
 \diagdown (\delta) = \operatorname{root}(C_1) \left(\diagdown(\delta^{\{1\}}_1), \ldots, \diagdown (\delta^{\{1\}}_{n_{\set{1}}}) \right)
$$
$$
 \diagup (\delta) = \operatorname{root}(C_p) \left(\diagup(\delta^{\{p\}}_1), \ldots, \diagup (\delta^{\{p\}}_{n_{\set{p}}}) \right)
$$
where the recursion uses $\Xi$-delegations induced as in \eqref{deltas}, by taking $B=\{1\}$ and $B=\{p\}$ respectively.
\end{definition}
Informally, $\diagdown (\delta)$ is obtained by systematically placing
the leftmost participating construct as low as possible in the tree,
whereas $\diagup(\delta)$ is obtained by placing the rightmost
participating construct as low as possible.

\begin{remark} In the binary case, i.e., for $\delta=((S:\hyper{H}_1, T:\hyper{H}_2), \hyper{H})$, $S=X(S_1, \ldots , S_p)$, $T=Y(T_1,\ldots, T_q)$, we can spell out the operations $\diagdown$ and $\diagup$ as follows, using the notation of Remark \ref{rembin}:
$$
 S \diagdown T = X\left(\diagdown_{i \in I_1} S_i, \ldots,\left( \diagdown_{i \in I_n} S_i \right) \diagdown T\right)
$$
$$
 S \diagup T = T\left( S \diagup \left(\diagup_{i \in J_1} T_{i} \right),\ldots, \diagup_{i \in J_m} T_i \right).
$$ where, for an interval $I=\llbracket a,b\rrbracket$, $(\diagdown_{i\in I} S_i)\diagdown T$ stands for $\ast(S_a,S_{a+1},\ldots,S_b,T)$ (this notation being justified by the next proposition).
\end{remark}

The following proposition states the associativity of $\diagdown$ and $\diagup$. The proof is omitted as 
it follows (in a simpler setting) the strategy of the proof of associativity of $\ast$ given in \cite{PLBJ1}, and as this result is not used in the sequel.

\begin{prop} \label{associativity-diag}
 Suppose that $\tau = ((\hyper{H}_1,\dots,\hyper{H}_p),\hyper{H})\in \Xi$ and that, for some $1\leq i\leq p$, $\tau'= (({\bf{H}}_{(i,1)},\dots, {\bf{H}}_{(i,n_i)}),\hyper{H}_{i})\in\Xi$. Suppose that 
 we are given $\Xi$-delegations $$\delta'=((C_{(i,1)},\dots,C_{(i,n_i)}),{\bf H}_i)$$ of $\tau'$ and $$\delta''=((C_1,\dots,C_{i-1},C_{(i,1)},\dots,C_{(i,k)},C_{i+1},\dots,C_p),{\bf H})$$ of the grafting $\tau''$ of $\tau'$ to $\tau$ along $i$.
Then the following equations hold:
\begin{itemize}
 \item Associativity of $\diagdown$:
$$
 \diagdown(\delta'') = 
\diagdown( C_1,\dots,C_{i-1},\diagdown(\delta'),C_{i+1},\dots,C_p ),
 $$
 \item Associativity of $\diagup$:
 $$
 \diagup(\delta'')=
\diagup( C_1,\dots,C_{i-1},\diagup(\delta'),C_{i+1},\dots,C_p).
$$
 \end{itemize}
\end{prop}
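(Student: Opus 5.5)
We would prove both equations by strong induction on $|H|$, following the shape of the associativity proof for $\ast$ in \cite{PLBJ1} but with only one summand to track at each level. Throughout, $\tau''$ is ordered with blocks $H_1<\cdots<H_{i-1}<H_{(i,1)}<\cdots<H_{(i,n_i)}<H_{i+1}<\cdots<H_p$, so its leftmost participant is $\hyper{H}_1$ when $i>1$ and $\hyper{H}_{(i,1)}$ when $i=1$. We treat $\diagdown$; the argument for $\diagup$ is its mirror image, exchanging leftmost with rightmost.

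\emph{Case $i>1$.} Both sides have root $\operatorname{root}(C_1)=:X$, and we must match the subtrees. By the decomposition (\ref{decomp}) with $B=\{1\}$, both sides remove the same set $X$ from $\hyper{H}$. Hence the connected components $H^{\{1\}}_1,\dots,H^{\{1\}}_{n}$ coincide. The induced delegations $\delta''^{\{1\}}_j$ and $\delta^{\{1\}}_j$ differ in only one respect. In $\delta''^{\{1\}}_j$, the participants $C_{(i,1)},\dots,C_{(i,n_i)}$ all lie in the single component containing $H_i$, because $H_i$ is connected and avoids $X$. In $\delta^{\{1\}}_j$, that same component contains the single participant $\diagdown(\delta')$. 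The team of $\delta''^{\{1\}}_j$ is therefore the grafting of $\tau'$ onto the team of $\delta^{\{1\}}_j$. Associativity of $\Xi$ and its closure under decomposition guarantee that all these teams lie in $\Xi$. The induction hypothesis on the strictly smaller $H^{\{1\}}_j$ then gives $\diagdown(\delta''^{\{1\}}_j)=\diagdown(\delta^{\{1\}}_j)$.

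\emph{Case $i=1$.} Here $\diagdown(\delta')$ has root $X:=\operatorname{root}(C_{(1,1)})$, so both sides again have root $X$.
\begin{itemize}
\item On the left, we remove $X$ from $\hyper{H}$.
\item On the right, the induced delegations come from removing $X$ from $\hyper{H}_1$, which yields the components of $\hyper{H}_1\backslash X$. Each such component carries a subtree of $\diagdown(\delta')$, namely $\diagdown(\delta'^{\{1\}}_k)$.
\end{itemize}
These components are then grouped together with $C_2,\dots,C_p$ into components of $\hyper{H}\backslash X$, using property (S). The key step is to show that, for each component $H^{\{1\}}_j$ of $\hyper{H}\backslash X$, the left-hand delegation (whose participants are the $C_{(1,k)}$ pieces and the $C_a$'s) is the grafting, along the pieces $\diagdown(\delta'^{\{1\}}_k)$, of the $\tau'$-decompositions onto the right-hand delegation. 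This amounts to checking that removing $X$ from $\hyper{H}_1$ first and then from $\hyper{H}$ gives compatible partitions, in the spirit of Lemma \ref{twonodes}. Once this is established, the induction hypothesis applies on each $H^{\{1\}}_j$. Since the grafting may happen along several participants at once, we would iterate the single-grafting induction hypothesis, or else strengthen the statement to simultaneous graftings along several indices.

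We expect the main obstacle to be this bookkeeping in the case $i=1$. Concretely, we must verify that the decomposition of the grafted team $\tau''$ along $X\subseteq H_{(1,1)}$ equals the grafting of the decomposed teams. In particular, a component of $\hyper{H}_1\backslash X$ may merge with $H_2,\dots,H_p$ or with other such components in $\hyper{H}\backslash X$, and the ordering of participants must be preserved. We would first try to settle this via the hereditarily ordered property together with (S), and otherwise prove the strengthened simultaneous-grafting version.
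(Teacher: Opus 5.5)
Your proposal is correct and matches the route the paper indicates. The paper omits the proof, saying only that it follows, in a simpler setting, the inductive associativity argument for $\ast$ in \cite{PLBJ1}; your induction on $|H|$ compares the roots and then the induced delegations component by component, which is that argument. The step you flag as the main obstacle for $i=1$ follows directly from (S): each component of $\hyper{H}_1\backslash X$ is connected in $\hyper{H}$ and disjoint from $X$, so it lies in a single component of $\hyper{H}\backslash X$, and each piece of the decomposed $\tau'$ lies in a single component of $\hyper{H}_1\backslash X$. Hence the left-hand induced delegation is a simultaneous grafting onto the right-hand one, and iterating the single-grafting hypothesis is legitimate because every intermediate team lies in $\Xi$ by associativity of the clan; no analogue of Lemma \ref{twonodes} is needed.
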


We spell out the link with usual associativity of a binary operation. Suppose that $\tau'' = ((\hyper{H}_1, \hyper{H}_2, \hyper{H}_3),\hyper{H})\in \Xi$, and that we are given constructs $S:\hyper{H}_1$, $T:\hyper{H}_2$ and $U:\hyper{H}_3$. Then by Proposition \ref{associativity-diag} applied twice, we have
$$
 \left( S \diagdown T \right) \diagdown U = \diagdown(S,T,U) = S \diagdown \left(T \diagdown U\right),
$$
and likewise for $\diagup$.

\begin{remark}
Contrary to what happens for the associahedron, the corresponding products are not duplicial (in the sense of \cite[Definition 5.1.1]{L08}). It can be checked for the permutohedron that the equality $(x \diagup y)\diagdown z = x\diagup(y\diagdown z)$, mixing $\diagup$ and $\diagdown$, is not satisfied. Indeed, in the permutohedron on $\{1,2,3\}$, we have:
$$
 (1\diagup 2) \diagdown 3 =2(1) \diagdown 3 = 2(1 \diagdown 3) = 2(1(3))
$$
and 
$$
 1\diagup (2 \diagdown 3) = 1\diagup (2(3))=2(1 \diagup 3) = 2(3(1)).
$$
This counter-example in fact extends to any hypergraph containing hyperedges which are not intervals.
\end{remark}

We return to our present goals.
For the sake of exposition, we give different notations for the different definitions of the shuffle product.
For a $\Xi$-delegation $\delta=((C_1:{\bf H}_1,\dots, C_p:{\bf H}_p),{\bf H})$ with $\text{root}(C_i)=X_i$, and a subset $\emptyset\neq B\subseteq \{1,\dots,p\}$, we write
\begin{align}
 \ast^{\lceil}(\delta) &=\sum_{U:\hyper{H} \;{\rm and}\; \forall 1\leq i\leq p, \restrconstr{U}{\hyper{H}_i}=C_i} q^{\mu^\tau(U)}\, U \label{prc}\\
\ast_B^{\lceil}(\delta)&= \sum_{U:\hyper{H} \;, {\rm root}(U)=X_B \; {\rm and}\; \forall 1\leq i\leq p, \restrconstr{U}{\hyper{H}_i}=C_i} q^{\mu^\tau(U)-|B|+1}\, U\\
\ast^\leq(\delta) &= \sum_{U:\hyper{H} \;{\rm and}\; \diagup(\delta) \leq U \leq\diagdown(\delta)} q^{\mu^\tau(U)}\, U \\
\ast^\leq_B(\delta) &= \sum_{U:\hyper{H} \;{\rm and}\; \diagup^B(\delta) \leq U \leq\diagdown_B(\delta)} q^{\mu^\tau(U)-|B|+1}\, U,
\end{align}
where $X_B=\bigcup_{b \in B} X_b$ and $\diagup^B(\delta)$ and $\diagdown_B(\delta)$ are defined as follows:
$$\begin{array}{lll}
\diagup^B(\delta)& = & X_B\left(\diagup(\delta^B_1), \ldots, \diagup(\delta^B_{n_B}) \right)\\
\diagdown_B(\delta) & = & X_B\left(\diagdown(\delta^B_1), \ldots, \diagdown(\delta^B_{n_B}) \right),
\end{array}$$
with the induced delegations $\delta_i^B$, $1\leq i\leq n_B$, as in \eqref{deltas}.

\medskip

 We can now state our main result, whose proof relies on Lemma \ref{interval-inclusion} that follows.
\begin{theorem}\label{interval-product}
Given a $\Xi$-delegation $\delta=((C_1:{\bf H}_1,\dots, C_p:{\bf H}_p),{\bf H})$ and a subset $\emptyset\neq B\subseteq \{1,\dots,p\}$, the following equations hold:
\begin{equation}
\ast(\delta)\stackrel{1}{=}\ast^{\lceil}(\delta)\stackrel{2}{=}\ast^\leq(\delta) \label{eq:equalityProd}
\end{equation}
\begin{equation}
\ast_B(\delta) \stackrel{1_B}{=} \ast_B^{\lceil}(\delta) \stackrel{2_B}{=}\ast_B^\lessdot(\delta).\label{eq:equalityProdB}
\end{equation}
\end{theorem}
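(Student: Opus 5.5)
Equalities $1$ and $1_B$ are exactly Proposition \ref{restriction-product-characterisation} (from \cite{PLBJ1}) specialised to the ordered setting, so nothing new is needed there. The content of the theorem is equality $2$ (and $2_B$). Both sides carry the same weights $q^{\mu^\tau(U)}$ (resp. $q^{\mu^\tau(U)-|B|+1}$), so it suffices to prove a set equality. For $2$ this reads
$$\setc{U:\hyper{H}}{\forall i,\ \restrconstr{U}{\hyper{H}_i}=C_i}=\setc{U:\hyper{H}}{\diagup(\delta)\leq U\leq \diagdown(\delta)},$$
and for $2_B$ it is the analogous statement with the root fixed to $X_B$ and the endpoints $\diagup^B(\delta)$, $\diagdown_B(\delta)$.

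\textbf{Step 1: the restriction set lies in the interval.} Let $U$ satisfy $\restrconstr{U}{\hyper{H}_i}=C_i$ for all $i$. I argue by induction on $|H|$. Write $\mathrm{root}(U)=X_B$ with $B=\setc{b}{X\cap H_b\neq\emptyset}$. The restriction condition forces $X\cap H_b=\mathrm{root}(C_b)$ and forces the subconstructs $U_j$ to satisfy the restriction conditions for the induced delegations $\delta^B_j$. By induction, $\diagup(\delta^B_j)\leq U_j\leq \diagdown(\delta^B_j)$, and by Remark \ref{lem:StabilityByContext}(b) this gives $\diagup^B(\delta)\leq U\leq \diagdown_B(\delta)$. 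This already yields the inclusion needed for $2_B$. For $2$ it remains to show
$$\diagup(\delta)\leq\diagup^B(\delta) \qquad\text{and}\qquad \diagdown_B(\delta)\leq\diagdown(\delta).$$
For the second inequality, let $b_0=\min B$. If $b_0\neq 1$, then $H_1<H_{b_0}$ and $C_1$ sits inside one of the induced components, so I go down a level by a root split. If $b_0=1$, I split $X_B$ into $X_1<X_{B\setminus\{1\}}$; this is a squashing as in Lemma \ref{squashing-lemma}, using hereditary order. In either case I then conclude by induction, with a cascade of splits. The first inequality is obtained symmetrically by fusions.

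\textbf{Step 2: the interval lies in the restriction set.} Let $\diagup(\delta)\leq U\leq\diagdown(\delta)$. Using Lemma \ref{restriction-order-comm} along a covering chain, I get
$$\restrconstr{\diagup(\delta)}{\hyper{H}_i}\leq\restrconstr{U}{\hyper{H}_i}\leq\restrconstr{\diagdown(\delta)}{\hyper{H}_i}.$$
I then show by induction on the recursive definitions that $\restrconstr{\diagup(\delta)}{\hyper{H}_i}=C_i=\restrconstr{\diagdown(\delta)}{\hyper{H}_i}$, i.e. that both extremal constructs belong to the restriction set. This uses Definition \ref{restriction-definition} and property (S), which keeps hyperedges of $\hyper{H}_i$ connected in $\hyper{H}$. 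By antisymmetry (Proposition \ref{lessdot-order}), $\restrconstr{U}{\hyper{H}_i}=C_i$. For $2_B$ the same argument applies, and additionally the root of $U$ is pinned to $X_B$: every $U$ in the interval has root with the same $\min$ and $\max$ as $X_B$ by Remark \ref{l1}, and the restriction equalities then force $\mathrm{root}(U)\cap H_b=X_b$ exactly for $b\in B$ and empty otherwise.

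\textbf{Main obstacle.} I expect the hardest part to be the inequalities in Step 1 relating $\diagdown_B(\delta)$ to $\diagdown(\delta)$ (and $\diagup$ to $\diagup^B$). These require tracking how the induced delegations $\delta^B_j$ compare with $\delta^{\{1\}}_j$ after a squashing, with careful bookkeeping of the index correspondences $\varphi$ in the binary spelling of Remark \ref{rembin}. I would first try the binary case and then extend it via associativity (Proposition \ref{associativity-diag}). The root pinning in $2_B$ may also need a sharper statement, namely that the restriction condition determines the root once the root lies between these bounds.
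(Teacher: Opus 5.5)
\textbf{Overall structure.} Your architecture matches the paper's. The restriction set lies inside $\bigcup_B[\diagup^B(\delta),\diagdown_B(\delta)]$ by induction on subconstructs and Remark~\ref{lem:StabilityByContext}(b). These intervals lie inside $[\diagup(\delta),\diagdown(\delta)]$ by the inequalities of Lemma~\ref{interval-inclusion}(a). The reverse inclusion comes from Lemma~\ref{restriction-order-comm}, the fact that both endpoints restrict to the $C_i$, and antisymmetry.

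\textbf{The gap.} It is in the step you flag yourself. For $\diagdown_B(\delta)\leq\diagdown(\delta)$ in the case $1\notin B$ you propose a root split, and you describe the whole inequality as a cascade of splits. This cannot work. By Remark~\ref{l1}, splits and non-root coverings never change $\min(\operatorname{root})$, while $\min X_1<\min X_B$ when $1\notin B$. So some root \emph{fusion} is unavoidable.

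The correct move is the paper's Claim~2. $C_1$ is the leftmost participant of the induced delegation $\delta^B_i$ with $H_1\subseteq H^B_i$, so the child $\diagdown(\delta^B_i)$ of the root has root $X_1<X_B$. Fusing it into the root gives exactly $\diagdown_{B\cup\{1\}}(\delta)$, identifying the induced delegations via Lemma~\ref{subface-decomp}, item~4.

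Dually, $\diagup(\delta)\leq\diagup^B(\delta)$ with $p\notin B$ needs a root split of $X_{B\cup\{p\}}$ into $X_B$ with child $X_p$. So ``fusions only'' is wrong there too.

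You cannot sidestep this case, because the induction is invoked on induced delegations whose index sets do not contain their leftmost participant. For instance, in your squashing step a component of $\hyper{H}\setminus X_1$ containing $X_{B_l}$ may also contain some $H_{(1,k)}$, whose construct is then leftmost.

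\textbf{The case $1\in B$.} Squashing $X_B$ into $X_1<X_{B\setminus\{1\}}$ is a valid alternative. However, it needs a multi-block version of Lemma~\ref{subface-decomp} to recognise the result as $X_1(\ldots,\diagdown_{B_l}(\delta^{\{1\}}_{j}),\ldots)$. The paper instead peels off $X_{\max B}$ one block at a time. Each step is an elementary split, because $X_{\max B}\subseteq H_{\max B}$ lies in a single component of $\hyper{H}\setminus X_{B\setminus\{\max B\}}$ by strictness. It then concludes by induction applied to $\{\max B\}$ and the induced delegation.

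\textbf{Root pinning in $2_B$.} Fixing only $\min$ and $\max$ of $\operatorname{root}(U)$ via Remark~\ref{l1} does not determine $B$ when $|B|\geq 3$. Use Lemma~\ref{l2} instead: it gives $\operatorname{root}(U)=X_B$ for every $U$ on a chain between two constructs rooted at $X_B$.
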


\begin{lemma} \label{interval-inclusion}
Let $\delta=((C_1:{\bf H}_1,\dots, C_p:{\bf H}_p),{\bf H})$ be a 
 $\Xi$-delegation and let $\emptyset\neq B\subseteq \{1,\dots,p\}$. Then
\begin{itemize}
\item[a)] the following inequalities hold: \begin{align}
 \diagup(\delta)&\leq\diagup^B(\delta), \label{ineq1}\\
 \diagup^B(\delta) &\leq \diagdown_B(\delta), \label{ineq3} \\
 \diagdown_B(\delta)&\leq\diagdown(\delta). \label{ineq2}
\end{align}
\item[b)] for any $1\leq k\leq p$, we have that $$\restrconstr{\diagup(\delta)}{{\bf H}_i}=\restrconstr{\diagup^B(\delta)}{{\bf H}_i}=C_i \quad \mbox{ and } \quad \restrconstr{\diagdown(\delta)}{{\bf H}_i}=\restrconstr{\diagdown_B(\delta)}{{\bf H}_i}=C_i.$$ 
\end{itemize}
\end{lemma}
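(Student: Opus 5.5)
The proof goes by simultaneous induction on $|H|$ (equivalently on the total number of nodes of the constructs in $\delta$), proving parts a) and b) together for all delegations of smaller hypergraphs in $\Xi$. Closure of $\Xi$ under decomposition guarantees that the induced delegations $\delta^B_i$ are again $\Xi$-delegations, so the induction hypothesis applies to them.

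\emph{Part b).} We unfold the definitions. For $\diagup^B(\delta)=X_B(\diagup(\delta^B_1),\ldots,\diagup(\delta^B_{n_B}))$ and an index $k$, we compute the restriction to ${\bf H}_k$ with Definition \ref{restriction-definition}. The restriction is legitimate because (S) makes every hyperedge of ${\bf H}_k$ connected in ${\bf H}$.
\begin{itemize}
\item If $k\notin B$, then $X_B\cap H_k=\emptyset$ and $H_k$ lies in a single component $H^B_i$. The restriction is then $\restrconstr{\diagup(\delta^B_i)}{{\bf H}_k}$, which equals $C_k$ by induction.
\item If $k\in B$, then $X_B\cap H_k=X_k$, and the components of ${\bf H}_k\setminus X_k$ are the $H_{(k,j)}$. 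Each of these is a participating hypergraph of some $\delta^B_i$. By induction the restrictions are $C_{(k,j)}$, so the result is $X_k(C_{(k,1)},\ldots)=C_k$.
\end{itemize}
Since $\diagup(\delta)=\diagup^{\{p\}}(\delta)$ and $\diagdown(\delta)=\diagdown_{\{1\}}(\delta)$, this covers all four claims.

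\emph{Part a), inequality \eqref{ineq3}.} By Remark \ref{lem:StabilityByContext}(b), it suffices to prove $\diagup(\delta^B_i)\leq\diagdown(\delta^B_i)$ for each $i$. This is the instance $B=$ everything composed through \eqref{ineq1}--\eqref{ineq2} at smaller size. Indeed, from \eqref{ineq1}, \eqref{ineq3} and \eqref{ineq2} at smaller size we get $\diagup\leq\diagup^{B'}\leq\diagdown_{B'}\leq\diagdown$. The base case is a delegation with a single participant, where $\diagup=\diagdown=C_1$.

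\emph{Part a), inequalities \eqref{ineq1} and \eqref{ineq2}.} These are the real work. For \eqref{ineq1} we compare $\diagup(\delta)$, rooted at $X_p$, with $\diagup^B(\delta)$, rooted at $X_B$. Suppose first $p\in B$. Then $X_B=X_p\cup X_{B\setminus\{p\}}$ with $X_{B\setminus\{p\}}<X_p$. We pass from $\diagup(\delta)$ to $\diagup^B(\delta)$ upward in the order in two stages:
\begin{enumerate}
\item Using the induction hypothesis inside subtrees, we first rearrange so that, for each $b\in B\setminus\{p\}$, the root $X_b$ sits as a child-chain directly below $X_p$. Concretely, inside the component $H^{\{p\}}_i$ containing them, we pass from $\diagup(\delta^{\{p\}}_i)$ to the appropriate $\diagup^{B'}(\delta^{\{p\}}_i)$, which is $\geq$ by induction.
\item We then perform successive fusions of these smaller-labelled nodes into the root. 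Each fusion is a legitimate covering because $X_p>X_b$.
\end{enumerate}
If $p\notin B$, we first go from $\diagup(\delta)$ up to $\diagup^{B\cup\{p\}}(\delta)$ by the previous case. We then perform a root split, or a squashing as in Lemma \ref{squashing-lemma} using hereditary ordering, separating $X_p$ from $X_B$; this is upward because $X_B<X_p$ fails in general. Since the direction of that step is unclear, I would instead handle $p\notin B$ via $\diagup(\delta)\leq\diagup^{B\cup\{p\}}\leq\diagup^B$ only when $X_B<X_p$ blockwise, and otherwise argue directly by fusing.

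Inequality \eqref{ineq2} is treated symmetrically, with splits of $X_B$ that peel off $X_1$ (the smallest block) replacing fusions. A split of $X_B$ into $X_1<X_{B\setminus\{1\}}$ is upward, and squashing (Lemma \ref{squashing-lemma}) handles connectivity.

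The main obstacle is bookkeeping: after a root fusion or squash, the subtrees must coincide exactly with $\diagup(\delta^B_i)$, respectively $\diagdown(\delta^B_i)$. This requires identifying how the decomposition $\tau,X_B\clandec\tau^B_i$ refines $\tau,X_p\clandec\tau^{\{p\}}_i$. The tool here is Lemma \ref{twonodes}-style uniqueness of the components of ${\bf H}\setminus(X\cup Y)$, together with the associativity of $\Xi$.
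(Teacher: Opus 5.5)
\textbf{The case you left open.} Most of your proposal is sound, but the cases $p\notin B$ of \eqref{ineq1} and $1\notin B$ of \eqref{ineq2} are not proved. The rest works: part b) by direct unfolding, using $\diagup=\diagup^{\{p\}}$ and $\diagdown=\diagdown_{\{1\}}$; inequality \eqref{ineq3} via $\diagup\leq\diagdown$ at smaller size; \eqref{ineq1} for $p\in B$, by induction inside each $H^{\{p\}}_i$ followed by root fusions; and \eqref{ineq2} for $1\in B$ by squashing. You leave $p\notin B$ open because you think $X_B<X_p$ can fail, and you fall back on an unspecified ``argue directly by fusing''. Your symmetric treatment of \eqref{ineq2} covers only splits peeling off $X_1$, so $1\notin B$ is missing as well. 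As written, your argument establishes \eqref{ineq1} only when $p\in B$ and \eqref{ineq2} only when $1\in B$. It therefore misses, for instance, $\diagup(\delta)\leq\diagup^{\{1\}}(\delta)$ for $p\geq 2$.

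\textbf{Why $X_B<X_p$ always holds.} The delegation is ordered, $H_1<\cdots<H_p$. If $p\notin B$ then $B\subseteq\{1,\ldots,p-1\}$, so $X_B\subseteq H_1\cup\cdots\cup H_{p-1}<H_p\supseteq X_p$.

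\textbf{The missing step for \eqref{ineq1}.} By (S), $H_p$ is connected in ${\bf H}$, and it is disjoint from $X_B$. Hence $X_p$ lies in a single component $H^B_i$ of ${\bf H}\setminus X_B$, and the root of $\diagup^{B\cup\{p\}}(\delta)$ is $(X_B,X_p)$-splittable; no squashing is needed. Since $C_p$ is the rightmost participant of $\delta^B_i$, property 4 of Lemma \ref{subface-decomp} identifies the subtree created at $X_p$ with $\diagup(\delta^B_i)$. So this single root split is the covering $\diagup^{B\cup\{p\}}(\delta)\lessdot\diagup^B(\delta)$. Combined with your case $p\in B$, this gives $\diagup(\delta)\leq\diagup^{B\cup\{p\}}(\delta)\lessdot\diagup^B(\delta)$.

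\textbf{The dual step for \eqref{ineq2}.} If $1\notin B$, then $C_1$ is the leftmost participant of the $\delta^B_i$ containing $H_1$. So $X_1$ is already a child of the root $X_B$ in $\diagdown_B(\delta)$. The root fusion, valid since $X_1<X_B$, gives $\diagdown_B(\delta)\lessdot\diagdown_{B\cup\{1\}}(\delta)$. This single-step comparison is exactly the paper's Claim 2.

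\textbf{Where your route differs from the paper's.} The paper peels off $\max B$ by one elementary split at a time and never squashes. It also derives the restrictions of $\diagup^B(\delta)$ and $\diagdown_B(\delta)$ from the sandwich $\diagup(\delta)\leq\diagup^B(\delta)\leq\diagdown(\delta)$, using Lemma \ref{restriction-order-comm} and antisymmetry. Your direct computation of part b) avoids that dependence.

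\textbf{Base case.} Your base case ``$\diagup=\diagdown=C_1$'' for a single participant is incorrect. $C_1$ is a construct of ${\bf H}_1$, which may have fewer hyperedges than ${\bf H}$. The induced delegations may also have several participants $C_{(1,j)}$, so $\diagup(\delta)$ and $\diagdown(\delta)$ can differ below the root. The correct base case is simply $|H|=1$.
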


Before proving Lemma \ref{interval-inclusion}, we collect a few useful observations in the following lemma, which uses the notation of \eqref{decomp}. Recall that we place ourselves in the (hereditarily) ordered setting (cf. Remark \ref{ordered-delegation}).

\begin{lemma} \label{subface-decomp}

Let $\delta=((C_1:{\bf H}_1,\dots, C_p:{\bf H}_p),{\bf H})$ be a 
 $\Xi$-delegation and let $\emptyset\neq B\subseteq \{1,\dots,p\}$. Suppose moreover that $B$ 
has at least two elements, and let $b_0\in B$ and $B'=B\setminus\set{b_0}$, so that
$X_B=X_{B'}\cup X_{b_0}$. Then we have the following properties:
\begin{enumerate}
\item
 There exists $1\leq i\leq n_{B'}$ such that $X_{b_0}\inc H_i^{B'}$.
 \item Setting $\hyper{H}_i^{B'}, X_{b_0}\leadsto H_1^{b_0},\ldots,H_m^{b_0}$, we have $n_B=n_{B'}+m-1$ and
 $$\begin{array}{c}
 H_1^B=H_1^{B'}, \dots , H_{i-1}^B=H_{i-1}^{B'}, \\
 H_i^B=H_1^{b_0} ,\dots , H_{i+m-1}^B=H_m^{b_0},\\
 H_{i+m}^B=H_{i+1}^{B'} , \dots , H_{n_B}^B=H_{n_{B'}}^{B'}.
 \end{array}$$
 so that the decomposition $\hyper{H}, X_B\leadsto H^B_1,\ldots,H^B_{n_B}$ can also be read as
 $$\hyper{H}, X_B\leadsto H^{B'}_1,\ldots,H^{B'}_{i-1},H^{b_0}_1,\ldots ,H^{b_0}_m,H^{B'}_{i+1},\ldots,H^{B'}_{n_{B'}}.$$
 \item For $i$ as above, we have that
 $$\begin{array}{lll}
 \delta_1^B=\delta_1^{B'},\ldots, \delta_{i-1}^B=\delta_{i-1}^{B'} &&
 \delta_{i+m}^B=\delta_{i+1}^{B'}, \ldots, \delta_{n_B}^B=\delta_{n_{B'}}^{B'}
 \end{array}$$
 and if $C=X_B(C_1,\ldots,C_{n_B})$ is a construct of $\hyper{H}$ with root $X_B$, then
 $X_{B'}(C_1,\ldots,C_{i-1},X_{b_0}(C_i,\ldots,C_{i+m-1}),C_{i+m},\ldots, C_{n_B})$ is a well-defined construct of
 $\hyper{H}$ which is by definition a facet of $C$.

 \item Further, 
 we have
 $(\delta_i^{B'})_l^{b_0}= \delta_{i+l-1}^B \quad (1\leq l\leq m)$.
 \end{enumerate}
\end{lemma}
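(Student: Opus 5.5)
The plan is to reduce everything to the binary situation $B'$ versus $\{b_0\}$ inside the coordinating hypergraph, and to use Lemma \ref{twonodes}-style reasoning about successive removals of vertex sets.

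\textbf{Property 1.} The set $X_{b_0}\subseteq H_{b_0}$ is disjoint from $X_{B'}$. I would first note that $H_{b_0}\cap X_{B'}=\emptyset$, since the participating hypergraphs are disjoint. Then, by property (S) of strict teams, $H_{b_0}$ is connected in $\hyper{H}$, because $\hyper{H}_{b_0}\in\mathfrak U$ is connected. Hence $H_{b_0}$, and therefore $X_{b_0}$, lies inside a single connected component $H_i^{B'}$ of $\restrH{H}{X_{B'}}$.

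\textbf{Property 2.} I would argue exactly as in the proof of Lemma \ref{twonodes}. Removing $X_B$ from $\hyper{H}$ amounts to first removing $X_{B'}$ and then removing $X_{b_0}$. The second removal only affects the component $\hyper{H}_i^{B'}$ containing $X_{b_0}$, and it splits that component into $H^{b_0}_1,\ldots,H^{b_0}_m$. Uniqueness of the decomposition into connected components then gives the listed equalities as an equality of sets of components, and it gives $n_B=n_{B'}+m-1$.

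The indexing in the ordered setting needs separate care. By hereditary orderedness, the components of $\restrH{H}{X_B}$ can be arranged increasingly. I then need the $H^{b_0}_l$ to occupy consecutive positions $i,\ldots,i+m-1$. The reason is that each other component $H_j^{B'}$ is disjoint from $H_i^{B'}\supseteq\bigcup_l H^{b_0}_l$, and the family $H^{B'}_1<\cdots<H^{B'}_{n_{B'}}$ is totally ordered by the relation $<$ on sets. So any $H^{B'}_j$ with $j<i$ lies below all of $H_i^{B'}$, and any with $j>i$ lies above all of it.

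\textbf{Property 3.} The equalities $\delta^B_j=\delta^{B'}_j$ for the untouched components follow from the definition \eqref{deltas}. The induced delegation on a component depends only on that component and on the $\tilde a$'s whose $H_{\tilde a}$ lies in it. For $j\neq i$, these $\tilde a$ are the same for $B$ and for $B'$: they are either elements of $A\setminus B$, or pieces $(b,k)$ with $b\in B'$, and neither kind is affected by $b_0$, because $H_{b_0}\subseteq H_i^{B'}$.

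For the facet claim, I would check the recursive definition of constructs. Here $\restrH{H}{X_{B'}}$ has the components $H^{B'}_j$. On the component $H_i^{B'}$, the tree $X_{b_0}(C_i,\ldots,C_{i+m-1})$ is a construct of $\hyper{H}_i^{B'}$, because $\hyper{H}^{B'}_i,X_{b_0}\leadsto H^{b_0}_1,\ldots,H^{b_0}_m$ by Property 2. Contracting the edge between $X_{B'}$ and $X_{b_0}$ gives back $C$, so the tree is a facet of $C$ (Definition \ref{rqueSubfaces}).

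\textbf{Property 4.} I would unfold both sides. The delegation $\delta_i^{B'}$ has as its participating constructs those indexed by $\tilde a$ with $H_{\tilde a}\subseteq H^{B'}_i$, and these include $C_{b_0}$ itself. Decomposing further along $X_{b_0}$ replaces $C_{b_0}$ by its immediate subconstructs $C_{(b_0,k)}$. The result is exactly the family of $\tilde a\in\tilde A$ relative to $B$ that lie in $H^{b_0}_l=H^B_{i+l-1}$.

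\textbf{Main obstacle.} The delicate point is the bookkeeping of orders and indices: showing that $b_0$ is a participating index in $\delta_i^{B'}$ (so that decomposing at $\{b_0\}$ makes sense within that clan member), and matching the orderings of the index sets. Closure of $\Xi$ under decomposition guarantees that the teams involved are in $\Xi$.
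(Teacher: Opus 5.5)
Your proposal is correct and follows the paper's proof essentially step by step. Property 1 uses strictness: $H_{b_0}$ is connected in $\hyper{H}$ and disjoint from $X_{B'}$. Property 2 removes $X_{B'}$ first and then $X_{b_0}$. Properties 3 and 4 rest on the observation that $\tilde A^B$ and $\tilde A^{B'}$ differ only by $\{b_0\}$ versus the pieces $(b_0,k)$, all of which lie inside $H_i^{B'}$. One difference: you prove explicitly, via hereditary orderedness, that the $H^{b_0}_l$ occupy the consecutive positions $i,\ldots,i+m-1$, whereas the paper dismisses this as ``obvious reasoning on connected components.''
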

\begin{proof}
As for the property 1, we have that $X_{b_0}\inc H_{b_0}$, and by strictness $H_{b_0}$ is connected in $\hyper{H}$. Since moreover $H_{b_0}\cap X_{B'}=\emptyset$ by construction, it follows that $H_{b_0}$, and a fortiori $X_{b_0}$, is included in some $H_i^B$.
The second property follows from obvious reasoning on connected components. For property 3, we write 
$$\begin{array}{lll}
\tilde A^B=L \cup\setc{(b_0,j)}{1\leq j\leq n_{b_0}}\\
\tilde A^{B'}= L\cup\set{b_0}
\end{array}$$
where $L=(\{1,\dots,p\}\setminus B)\cup\setc{(b',j)}{b'\in B'\;\textrm{and}\;1\leq j\leq n_{b'}}$. Now, for all 
$1\leq j\leq n_{b_0}$, we have $H_{(b_0,j)}\inc H_{b_0}\inc H_i^B$. So the symmetric difference of 
$\tilde A^B$ and $\tilde A^{B'}$ lies entirely in $H_i^B$, so to say. The statement then follows readily.
Property 4 follows from the same observation and again from obvious reasoning on connected components.
\end{proof}

\begin{proof} [Proof of Lemma \ref{interval-inclusion}]
We prove statements (a) and (b) together by induction on $|H|$. By transitivity, the three inequalities in statement (a) imply the inequality 
\begin{align}
 \diagup(\delta)&\leq \diagdown(\delta). \label{ineq4}
\end{align}
We prove the four inequalities together. The base case holds vacuously.
The inequality (\ref{ineq3}) follows by induction from the inequalities (\ref{ineq4}) applied to $\diagup(\delta^B_i)$ and $\diagdown(\delta^B_i)$ for $1\leq i \leq n_B$ as both constructs $\diagup^B(\delta)$ and $\diagdown_B(\delta)$ have the same root $X_B=\bigcup_{b \in B} X_b$.

We are left with (\ref{ineq1}) and (\ref{ineq2}). We prove (\ref{ineq2}) only, as the proof of (\ref{ineq1}) is analogous.
We shall prove the following two claims:
\begin{enumerate}
\item If $|B| \geq 2$, then $\diagdown_B(\delta) \leq \diagdown_{B \setminus\set{\max(B)}}(\delta)$.
\item If $1\nin B$, $
 \diagdown_B(\delta)
 \leq
 \diagdown_{B \cup\{1\}}(\delta)$.
\end{enumerate}
Inequality (\ref{ineq2}) follows from these two claims: starting from $B$, adding $1$ to $B$ at the beginning if needed and then iteratively deleting the maximum of the current set, we get a sequence of sets $B_0, \ldots, B_p$ satisfying $B_0=B$, $B_p=\{1\}$ and $\diagdown_{B_i}(\delta)
 \leq
 \diagdown_{B_{i+1}}(\delta)$ for any $i$. 
 Hence $$\diagdown_{B}(\delta)=\diagdown_{B_{0}}(\delta)\leq\diagdown_{B_{p}}(\delta)=
 \diagdown_{\set{1}}(\delta)= \diagdown(\delta).$$
We now prove Claim 1.\\
We apply Lemma \ref{subface-decomp}, with $b_0=\max(B)$ and $B'~=~B\setminus~\set{b_0}$:
\begin{align*}
 \diagdown_B(\delta) &= X_B \left(\diagdown(\delta^B_1), \ldots, \diagdown(\delta^B_{n_B}) \right) \\
&\lessdot X_{B'}(\ldots, \diagdown(\delta^B_{i-1}),X_{b_0}(\diagdown(\delta^B_i),\ldots,\diagdown(\delta^B_{i+m-1})),\diagdown(\delta^B_{i+m}),\ldots)\\
& = X_{B'}(\ldots, \diagdown(\delta^{B'}_{i-1}),X_{b_0}(\diagdown((\delta^{B'}_i)^{b_0}_1),\ldots,\diagdown((\delta^{B'}_i)^{b_0}_{m})),\diagdown(\delta^{B'}_{i+1}),\ldots)\\
&= X_{B'}(\diagdown( \delta^{B'}_1),\ldots, \diagdown(\delta^{B'}_{i-1}),\diagdown_{\set{b_0}}(\delta^{B'}_i),\diagdown(\delta^{B'}_{i+1}),\ldots,\diagdown(\delta^{B'}_{n_{B'}}))\\
&\leq X_{B'}(\diagdown( \delta^{B'}_1),\ldots, \diagdown(\delta^{B'}_{i-1}),\diagdown(\delta^{B'}_i),\diagdown(\delta^{B'}_{i+1}),\ldots,\diagdown(\delta^{B'}_{n_{B'}})) \\
&= \diagdown_{B'}(\delta).
\end{align*}

 Above, the initial $\lessdot$ is justified by the inequality $B'<b_0$, which, because we are in the ordered setting, implies $X_{B'}<X_{b_0}$. The final $\leq$ follows by induction from inequality (\ref{ineq2}) applied to $\set{b_0}$ and $\delta^{B'}_i$.
 
 The proof of Claim 2 is similar, replacing $B,b_0,B'$ with $B\cup\set{1},1,B$, and using $\set{1} < B$. 

\medskip
We now address the statement (b) of Lemma \ref{interval-inclusion}. We first prove the equality $\restrconstr{\diagdown(\delta)}{{\bf H}_i}=C_i$ by induction on $|H|$. The initialisation is immediate.
We write $C_1=X_1(C_{(1,1)},\ldots,C_{(1,n_1)})$, {$\hyper{H}, X_1 \leadsto H_1^{\set{1}}, \ldots , H_n^{\set{1}}$ and $\hyper{H}_1, X_1 \leadsto H_{(1,1)}, \ldots ,H_{(1,n_1)}$}. 
We distinguish two cases.
\begin{itemize}
\item $k\neq 1$.
By strictness we have that $H_k\subseteq H_{i_k}^{\set{1}}$ for some $i_k$, hence $\restrconstr{\diagdown(\delta)}{{\bf H}_k}=\restrconstr{\diagdown(\delta_{i_k}^{\{1\}})}{{\bf H}_k}=C_k$ by definition of restriction and by induction hypothesis. 
\item $k=1$. By strictness each $H_{(1,j)}$ is included in some $H_{i_j}^{\{1\}}$, and the construct
sitting in position $(1,j)$ in the induced team $\delta_{i_j}^{\set{1}}$ is $C_{(1,j)}$. We then have $\restrconstr{\diagdown(\delta_{i_j}^{\{1\}})}{{\bf H}_{(1,j)}}=C_{(1,j)}$ by induction hypothesis.
Then we get (by definition of restriction)
\begin{align*}
\restrconstr{\diagdown(\delta)}{{\bf H}_1} & = X_1(\restrconstr{\diagdown(\delta_{i_1}^{\set{1}})}{{\bf H}_{(1,1)}},\ldots,
 \restrconstr{\diagdown(\delta_{i_{n_1}}^{\set{1}})}{{\bf H}_{(1,n_1)}})\\
& = X_1(C_{(1,1)},\ldots,C_{(1,n_1)}) \; = \; C_1. 
\end{align*}
\end{itemize}
 
The proof of 
$\restrconstr{\diagup(\delta)}{{\bf H}_i}=C_i$ is similar and left to the reader.

\medskip
The equalities $\restrconstr{\diagup^B(\delta)}{{\bf H}_i}=C_i$ and $\restrconstr{\diagdown_B(\delta)}{{\bf H}_i}=C_i$ follow from the equalities just proved, from the inequalities $\diagup(\delta)\leq \diagup^B(\delta)\leq \diagdown(\delta)$ that we established above, and from Lemma \ref{restriction-order-comm} and Proposition \ref{lessdot-order}.
\end{proof}

We are now in position for proving the main result of this section.

\begin{proof}[Proof of Theorem \ref{interval-product}]
We already proved (1) and $(1_B)$ in \cite{PLBJ1} (cf. Proposition \ref{restriction-product-characterisation} above). 

\medskip
We need some preparation to establish (2) and ($2_B$).
We define the following properties on constructs of $\hyper{H}$, relative to a delegation $\delta$:
\begin{itemize}
\item $Q(U)$ whenever the coefficient of $U$ in $\ast(\delta)$ is non-zero.
\item $Q^\lceil(U)$ holds whenever $\restrconstr{U}{\hyper{H}_i}= C_i$ for all $1\leq i\leq p$;
\item $Q^\lessdot(U)$ holds whenever $\diagup(\delta) \leq U \leq\diagdown(\delta)$;

\end{itemize}
and their relative versions:
\begin{itemize}
\item $Q_B(U)$ whenever the coefficient of $U$ in $\ast_B(\delta)$ is non-zero.
\item $Q_B^\lceil(U)$ holds whenever $Q^\lceil(U)$ holds and ${\rm root}(U)=X_B$;
\item $Q_B^\lessdot(U)$ holds whenever $\diagup^B(\delta) \leq U \leq\diagdown_B(\delta)$.
\end{itemize}
We note that $Q(U)$ holds if and only if $Q_B(U)$ holds for some unique $B$.
From the discussion above, we have that $Q$ coincides with $Q^{\lceil}$, and $Q_B$ coincides with $Q_B^\lceil$.

We remark that (2) and ($2_B$) are equivalent to the following two properties: 
\begin{itemize}
\item[(A)] $Q^{\lessdot}(U) \Rightarrow Q^\lceil(U)$ and $Q_B^{\lessdot}(U) \Rightarrow Q_B^\lceil(U)$
\item[(B)] $Q^{\lceil}(U) \Rightarrow Q^\lessdot(U)$ and $Q_B^{\lceil}(U) \Rightarrow Q_B^\lessdot(U)$, which we conveniently reformulate as $Q(U) \Rightarrow Q^\lessdot(U)$ and $Q_B(U)\Rightarrow Q_B^\lessdot(U)$.
\end{itemize}
The two implications in (A) follow immediately 
 from Lemma \ref{restriction-order-comm} and from Proposition \ref{lessdot-order}.

For (B), we shall reason by induction on $U$. Suppose that 
$Q_B(U)$ holds. Then $U=X_B(U_1,\ldots,U_{n_B})$ and $Q(U_i)$ holds for all $i$ relatively to $\delta_i^B$.

Applying the induction hypothesis to $U_1, \ldots, U_{n_B}$, we get that the properties $Q^\lessdot(U_i)$ are true. Hence $Q^\lessdot_{B}(U)$ holds by Remark \ref{lem:StabilityByContext} and by definition of $\diagup^B(\delta)$ and $\diagdown_B(\delta)$. Finally, by the inequalities 
established in Lemma \ref{interval-inclusion}, we have that the following implication holds: if, for some $B$, $Q^\lessdot_B(U)$ holds, then $Q^\lessdot(U)$ holds. This finishes the proof of the theorem. 
\end{proof}

\begin{example} We illustrate the inequality \eqref{ineq2} of Lemma 4.6 in the classical setting of associahedra. For $\delta=((S:\hyper{K}^{V_1}, T:\hyper{K}^{V_2}), \hyper{K}^{V_1\cup V_2})$, $S=X(S _1,\ldots,S_k)$ and
 $T=Y(T_1,\ldots,T_l)$, we have: 
$$\begin{array}{rcl}
S\diagdown_{\set{1,2}} T &= &(X\cup Y)(S _1,\ldots,S_{k-1},S_k\diagdown T_1,T_2,\ldots , T_l) \\
 &\lessdot& X(S _1,\ldots,S_{k-1},Y(S_k\diagdown T_1,T_2,\ldots ,T_l)).
\end{array} $$ 
 
 \noindent We note that $Y$ is the root of the second construct in the delegation $(S_k,T)$, and not of the first.
 So there is quite a way still from there to $$S\diagdown T=X(S_1,\ldots,S_{k-1},S_k\diagdown T),$$ but 
 we do have $Y(S_k\diagdown T_1,T_2,\ldots, T_l)\leq S_k\diagdown T$ by induction, 
 and
 hence $$X(S _1,\ldots,S_{k-1},Y(S_k\diagdown T_1,T_2,\ldots , T_l))\leq S\diagdown T \quad \mbox{ and } \quad S\diagdown_{\{1,2\}} T\leq S\diagdown T.$$ 
 \end{example}

The following example shows that the interval formula for the product may not hold outside the strict setting.
\begin{example}\label{hcf}
Consider the 2-dimensional hypercube $${\bf C}^{\{1,2,3\}}=\{\{1\},\{2\},\{3\},\{1,2\},\{1,2,3\}\}$$ and the corresponding Hasse diagram given in Figure \ref{fig:Hassediag1}. Observe that the preteam $(\{{\bf C}^{\{1\}},{\bf C}^{\{2,3\}}\},{\bf C}^{\{1,2,3\}})$ is not a strict team. We have $$1\ast \{2,3\}=1(2,3)-\{1,2,3\}+\{2,3\}(1).$$ However, $$[\{2,3\}(1),1(2,3)]\supsetneq \{1(2,3),\{1,2,3\},\{2,3\}(1)\}.$$

\end{example}

\section{Discussion} \label{comparison-section}
We devote this section to positioning our generalised flip order with respect to other partial orders on the faces of certain polytopes proposed in the literature.
Specifically, we compare the GFO with the weak order previously introduced by Palacios and Ronco in \cite{PalaciosRonco} on the faces of $W$-permutohedra, and later further studied by Dermenjian, Hohlweg and Pilaud in \cite{DHP-WFO}, as well as with the generalised Tamari order on the faces of graph-associahedra, due to Ronco \cite{RoncoGTO}.

We shall henceforth denote the generalised flip order explicitly by $\leq_{GFO}$, in order to distinguish it from the other orders appearing in the discussion.

\subsection{Facial weak order}

The weak Bruhat order on permutations was first extended to all the faces of the permutohedra by Krob-Latapy-Novelli-Phan-Schwer \cite{KLNPS}. This order was then extended by Palacios-Ronco \cite{PalaciosRonco} to all $W$-permutohedra (which are polytopes associated with finite Coxeter systems), and further studied in this generality
by Dermenjian-Hohlweg-Pilaud \cite{DHP-WFO}, who called it facial weak order. We present here a version of the facial weak order that applies to all nestohedra, and investigate the link between this order and ours.

\begin{lemma}\label{lemmeWeak}
Let ${\bf H}$ be a hereditarily ordered hypergraph, let $S:{\bf H}$ and write $\downarrow^{\subseteq}\!({S}):=\{T:{\bf H}\,|\, T \subseteq S\}$ for the set of faces contained in $S$ (in the subface poset of constructs introduced in Definition \ref{rqueSubfaces}). 
The minimum $\bot_S$ (resp. maximum $\top_S$) of $\downarrow^{\subseteq}\!({S})$ in the generalised flip order is the construction defined by picking the root of $S$, squashing it if needed in order to place its 
largest (resp. smallest) element at the root, and then proceeding recursively in the immediate subconstructs of $S$ if the root of $S$ is a singleton, or of its squashing if the root is not a singleton.
In symbols, we have:
\begin{itemize}
\item
if 
$S = \set{m}(S_1,…,S_n)$, then 
$\bot_S:=\set{m}(\bot_{S_1},…,\bot_{S_n})$;
\item
if $\operatorname{root}(S) = X$, with $|X|>1$, $m=max(X)$ and $X’= X\setminus\set{m}$, then
$\bot_S := \bot_{S’}$ where $S’=S[\set{m}\langle X'\rangle/X]$.
\end{itemize}

\end{lemma}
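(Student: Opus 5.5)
We prove the statement for $\bot_S$ only; the statement for $\top_S$ follows by the symmetric argument, exchanging $\max$ with $\min$ and fusions with splits. Recall that $T\subseteq S$ means that $S$ is obtained from $T$ by edge contractions. Thus the elements of $\downarrow^{\subseteq}\!(S)$ are the constructs $T$ whose nodes refine those of $S$: each node of $S$ is a union of a connected "block" of nodes of $T$, and collapsing these blocks gives back $S$. The proof mirrors that of Proposition \ref{MaxWeakOrder}, with the whole poset replaced by the subposet $\downarrow^{\subseteq}\!(S)$. It has three steps.
\begin{enumerate}
\item Show that $\bot_S$ is a well-defined construction lying in $\downarrow^{\subseteq}\!(S)$.
\item Show that every $T\in\downarrow^{\subseteq}\!(S)$ with $T\neq\bot_S$ admits some $T'\in\downarrow^{\subseteq}\!(S)$ with $T'\lessdot T$.
\item Conclude by finiteness. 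Any descending chain inside $\downarrow^{\subseteq}\!(S)$ must stop, and by step 2 it can only stop at $\bot_S$. Hence $\bot_S\leq T$ for all $T\subseteq S$, and antisymmetry (Proposition \ref{lessdot-order}) gives that $\bot_S$ is the minimum.
\end{enumerate}

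For step 1, argue by induction on the number of vertices, following the recursive definition. The squashing $S[\{m\}\langle X'\rangle/X]$ makes sense for an arbitrary partition, by the remark after Definition \ref{squashing-notation}, even though here $\{m\}>X'$ violates the order condition. It produces a construct refining $S$, and so does the recursion on the subconstructs. The result is a construction, since at each stage a singleton is extracted at the root.

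For step 2, there are two kinds of downward moves inside $\downarrow^{\subseteq}\!(S)$. The first is refining a node $Z$ of $T$ with $|Z|\ge 2$: put $\max(Z)$ as parent over the rest, disintegrated as in Lemma \ref{squashing-lemma}. Each elementary step $\{m\}(V)$ with $V<m$ makes $T$ the fusion of the new construct, so it is a downward covering, and it stays a refinement of the same node of $S$. The second is contracting an edge $a(b)$ of $T$ with $a<b$ whose endpoints lie in the same node of $S$. This is a downward covering because $T$ is then a split of the contracted construct with the order condition satisfied, and it also stays in $\downarrow^{\subseteq}\!(S)$. So we must prove that if neither move applies, then $T=\bot_S$. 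In that case $T$ is a construction. Moreover, inside each block of nodes of $T$ refining a node $X$ of $S$, every parent is larger than each of its children. We then argue by induction on $|H|$ that this forces the shape of $\bot_S$.
\begin{itemize}
\item Let $X=\operatorname{root}(S)$ and $m=\max(X)$. The block of $X$ in $T$ is a tree of singletons, decreasing along every parent–child edge, so its root is $\max(X)=m$. Hence $\operatorname{root}(T)=\{m\}$.
\item Removing $\{m\}$ splits $H$ into connected components. The subconstructs of $T$ are then refinements of the subconstructs of $S'=S[\{m\}\langle X'\rangle/X]$.
\item Induction applied to these subconstructs gives $T=\bot_{S'}=\bot_S$.
\end{itemize}

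The main obstacle is the middle claim: that the subconstructs of $T$ under $\{m\}$ are exactly refinements of the children of $\{m\}$ in $S'$. The disintegration of $X'$, and the way the $S_i$ are distributed among the new nodes, must match the connected components of $\mathbf{H}\setminus\{m\}$. To prove this I would reuse the bookkeeping in the proof of Lemma \ref{squashing-lemma}. Both $T$ and $S'$ have root $\{m\}$, and their immediate subtrees are indexed by the same decomposition $\mathbf{H},\{m\}\leadsto\cdots$. Contracting the blocks of $T$ then gives a construct with root $X$ that must coincide with $S$; comparing component by component, this yields $T\subseteq S'$. Hereditary orderedness is used to order the disintegration, as in Lemma \ref{squashing-lemma}.
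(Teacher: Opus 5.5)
Your proof is correct, but it takes a different route from the paper's.

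\textbf{The paper's route.} It argues by induction, with a case analysis on where $m=\max(\operatorname{root}(S))$ sits in a given $T\subseteq S$.
\begin{itemize}
\item If $\operatorname{root}(S)$ is a singleton, it concludes componentwise via Remark~\ref{lem:StabilityByContext}(b).
\item If $m\in\operatorname{root}(T)$, it squashes $\operatorname{root}(T)$ to put $\{m\}$ on top. The resulting $T'$ lies above $\bot_S$ by the singleton case, and below $T$ because the squash is undone by a chain of fusions. This is exactly where hereditary orderedness is invoked.
\item If $m$ lies deeper, it repeatedly lifts $m$ to the top of its node and contracts it into the parent. Each such step descends in the GFO, until $m$ reaches the root.
\end{itemize}

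\textbf{Your route.} You transplant the argument of Proposition~\ref{MaxWeakOrder}: two local downward moves that stay inside $\downarrow^{\subseteq}\!(S)$, finiteness plus acyclicity, and a rigidity statement saying that $\bot_S$ is the only construct admitting neither move.

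Both proofs hinge on the same bookkeeping fact: a $T\subseteq S$ with root $\{m\}$ refines $S[\{m\}\langle X'\rangle/X]$. The paper uses this tacitly when it says that $T'$ ``falls in case (1)''. You isolate it explicitly, and your block-contraction sketch of it is sound. You should add the one-line check that the property ``decreasing inside each block'' passes to the immediate subconstructs, relative to the children of $S'$, so that the induction applies.

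\textbf{One imprecision in move (i).} When the disintegration of $Z\setminus\{\max Z\}$ has several blocks, a single elementary step does not make $T$ the fusion of the new construct. Only the construct immediately preceding $T$ in the ordered fusion chain is a lower cover of $T$. It is simpler to detach $\{\min Z\}$ as a child of $Z\setminus\{\min Z\}$. A singleton always lies in one component, and contracting that edge is a fusion with the parent larger than the child.

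\textbf{What each approach buys.} With the $\{\min Z\}$ choice, your argument never uses hereditary orderedness. Squashing is defined for arbitrary partitions, and both the rigidity step and acyclicity are order-free. So your route yields the lemma for arbitrary ordered hypergraphs. The paper's route instead stays close to the recursive definition of $\bot_S$ and reuses the product structure of intervals under a fixed root. It therefore needs no separate uniqueness statement for locally minimal elements.
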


\begin{proof} We prove the lemma for $\bot_S$; the proof for $\top_S$ is symmetric.
We proceed by induction on the number of vertices in the construct $S$. If $S$ has only one vertex (namely, $H$), then $\downarrow^{\subseteq}\!({S})$ is precisely the set of all constructs of ${\bf H}$ and hence the lemma follows from Lemma \ref{MaxWeakOrder}. Assume now that $S=X(S_1,\dots,S_n)$ has more than one vertex. 
\begin{enumerate}
\item If $|X|=1$, then for any construct $T\in \,\downarrow^{\subseteq}\!({S})$, we have that $\text{root}(T)=X$. Writing $T=X(T_1,\dots,T_n)$, we moreover have that $T_i\in\, \downarrow^{\subseteq}\!(S_i)$ for every $i$. By induction, the minimum of $\downarrow^{\subseteq}(S_i)$ is $\bot_{S_i}$. Therefore, by Remark \ref{lem:StabilityByContext}, we conclude that $\bot_S=X(\bot_{S_1},\dots,\bot_{S_n})\leq_{\mathit{GFO}} T$.
\item Assume now that $|X|\geq 2$, write $m:=\max(X)$ and let $T\in\,\downarrow^{\subseteq}\!(S)$. 
We distinguish two subcases:
\begin{enumerate}
\item If $m\in \text{root}(T)$, then $m$ is also the maximum of $\text{root}(T)$ (since $\text{root}(T)\subseteq X$ by definition of the subface poset). We define $T':=T[\{m\}\langle X\backslash\{m\}\rangle/X]$. Since $T'\subseteq T\subseteq S$, we have that $T'\in \,\downarrow^{\subseteq}\!(S)$. 
Since $T'$ falls in case (1) that we already treated, we have $\bot_S \leq_{\mathit{GFO}} T'$.
On the other hand, since ${\bf H}$ is hereditarily ordered, we can ``undo'' the squash that defines $T'$ in terms of fusions of the GFO. Hence we have $\bot_S \leq_{\mathit{GFO}} T'\leq_{\mathit{GFO}} T$. 
\item Otherwise, let $Y$ be the vertex of $T$ containing $m$ and let $Z$ be the parent of $Y$ in $T$. By definition of the subface poset, we have that $Y,Z\in \text{root}(S)$ and $Y,Z<\{m\}$. Consider the construct $T'':=T[\{m\}\langle Y\backslash\{m\} \rangle/Y][(Z\cup\{m\})/Z(\{m\})]$. Observe that moving from $T$ to $T''$ strictly decreases the distance between the node containing $m$ and the root of the construct. Therefore, repeating this procedure finitely many times yields a construct $T'''$ with $m\in \text{root}(T''')$ and $T'''\in \,\downarrow^{\subseteq}\!(S)$, which brings us back to case (2a) or (1), so that we have $\bot_S\leq_{\mathit{GFO}} T'''$. It is easy to see that $T'''\leq_{\mathit{GFO}} T$ and hence
$\bot_S\leq_{\mathit{GFO}} T$. This concludes the proof.
\end{enumerate}
\end{enumerate}
\end{proof}

The idea of the following definition has been suggested by Vincent Pilaud 
 \cite{private-comm}.

\begin{definition} \label{fwo-def}
[Facial weak order] Let us consider a hypergraph $\mathbf{H}$. Viewing a construct $S: {\bf H}$ through its set of vertices leads us to the following notation:
\begin{align*}
\max\set{S}&=\max\{T: {\bf H}|\; T\subseteq S, T\:\mbox{is a construction} \} \;\mbox{and}\\
\min\set{S}&=\min\{T: {\bf H}|\:T\subseteq S, T\:\mbox{is a construction}\},
\end{align*}
where the minimum and maximum are considered relatively to the flip order $\leq_{\operatorname{flip}}$  (cf. Definition \ref{flip-order}).
Consider the following covering relations: $S \lessdot_{\mathit{FWO}} T$ if and only if
\begin{itemize}
 \item $S$ is a facet of $T$, $\min\set{S}=\min\set{T}$ and $\max\set{S}<_{\operatorname{flip}} \max\set{T}$,
 \item or $T$ is a facet of $S$, $\max\set{S}=\max\set{T}$ and $\min\set{S}<_{\operatorname{flip}} \min\set{T}$.
\end{itemize}

The transitive and reflexive closure $\leq_{\mathit{FWO}}$ of $S \lessdot_{\mathit{FWO}} T$ is an order called the \emph{facial weak order}.
\end{definition}

\begin{remark} \label{min-equals-bot}
\begin{itemize}
\item Note that if $S <_{\mathit{FWO}} T$, then either $\min\set{S}\leq \min\set{T}$ and $\max\set{S}\leq \max\set{T}$, at least one of these inequalities being strict. Therefore, $S <_{\mathit{FWO}} S$ implies either $\max\set{S}< \max\set{S}$ or $\min\set{S}< \min\set{S}$, implying that this order is well-defined.
\item Following Lemma \ref{lemmeWeak}, $\max\set{S}=\top_S$ and $\min\set{S}=\bot_S$ as both $\top_S$ and $\bot_S$ are constructions.
\item The facial weak order is not included in the generalised flip order on the hypercube (see Figure \ref{WvsF}).
\end{itemize}
\end{remark}

\begin{figure}[H]
\begin{center}
\resizebox{4.5cm}{!}{\begin{tikzpicture}[scale=0.85,
 hasse/.style={
 draw,
 rounded corners,
 fill=#1,
 inner sep=2pt
 },
 tree/.style={font=\footnotesize},
 edge/.style={thick}
]

\node[hasse=MutedLavender!25] (c1) at (0,5) {
 \begin{tikzpicture}[tree]
 \node (a) at (0,0) {$1$};
 \node (b) at (-0.35,0.6) {$2$};
 \node (c) at (0.35,0.6) {$3$};
 \draw[edge] (b)--(a)--(c);
 \end{tikzpicture}
};

\node[hasse=SlateBlue!25] (a1) at (-1.85,3.1) {
 \begin{tikzpicture}[tree]
 \node (a) at (0,0) {$\{1,3\}$};
 \node (b) at (0,0.7) {$2$};
 \draw[edge] (a)--(b);
 \end{tikzpicture}
};

\node[hasse=SageGreen!25] (a2) at (0,1) {
 \begin{tikzpicture}[tree]
 \node {$\{1,2,3\}$};
 \end{tikzpicture}
};

\node[hasse=SageGreen!25] (a3) at (3.8,1) {
 \begin{tikzpicture}[tree]
 \node (a) at (0,0) {$2$};
 \node (b) at (-0.35,0.6) {$1$};
 \node (c) at (0.35,0.6) {$3$};
 \draw[edge] (b)--(a)--(c);
 \end{tikzpicture}
};

\node[hasse=SlateBlue!25] (b1) at (1.85,3.1) {
 \begin{tikzpicture}[tree]
 \node (a) at (0,0) {$\{1,2\}$};
 \node (b) at (0,0.7) {$3$};
 \draw[edge] (a)--(b);
 \end{tikzpicture}
};

\node[hasse=DirtyYellow!25] (b2) at (1.85,-1.1) {
 \begin{tikzpicture}[tree]
 \node (a) at (0,0) {$\{2,3\}$};
 \node (b) at (0,0.7) {$1$};
 \draw[edge] (a)--(b);
 \end{tikzpicture}
};

\node[hasse=DirtyYellow!25] (b3) at (-1.85,-1.1) {
 \begin{tikzpicture}[tree]
 \node (a) at (0,0) {$3$};
 \node (b) at (0,0.7) {$\{1,2\}$};
 \draw[edge] (b)--(a);
 \end{tikzpicture}
};

\node[hasse=EggShell!25] (c2) at (0,-3) {
 \begin{tikzpicture}[tree]
 \node (a) at (0,0) {$3$};
 \node (b) at (0,0.6) {$2$};
 \node (c) at (0,1.2) {$1$};
 \draw[edge] (a)--(b)--(c);
 \end{tikzpicture}
};

\node[hasse=SageGreen!25] (d1) at (-3.8,1) {
 \begin{tikzpicture}[tree]
 \node (a) at (0,0) {$3$};
 \node (b) at (0,0.6) {$1$};
 \node (c) at (0,1.2) {$2$};
 \draw[edge] (a)--(b)--(c);
 \end{tikzpicture}
};

\draw[edge]
 (c2)--(b3)--(a2)--(b2)--(c2);
 
\draw[edge]
(b3)--(d1)--(a1)--(c1)--(b1)--(a3)--(b2);
\draw[edge]
(a2)--(b1);
\end{tikzpicture}}
\hspace{1cm}
\resizebox{4.5cm}{!}{\begin{tikzpicture}[scale=0.85,
 hasse/.style={
 draw,
 rounded corners,
 fill=#1,
 inner sep=2pt
 },
 tree/.style={font=\footnotesize},
 edge/.style={thick}
]

\node[hasse=MutedLavender!25] (c1) at (0,5) {
 \begin{tikzpicture}[tree]
 \node (a) at (0,0) {$1$};
 \node (b) at (-0.35,0.6) {$2$};
 \node (c) at (0.35,0.6) {$3$};
 \draw[edge] (b)--(a)--(c);
 \end{tikzpicture}
};

\node[hasse=SlateBlue!25] (a1) at (-1.85,3.1) {
 \begin{tikzpicture}[tree]
 \node (a) at (0,0) {$\{1,3\}$};
 \node (b) at (0,0.7) {$2$};
 \draw[edge] (a)--(b);
 \end{tikzpicture}
};

\node[hasse=SageGreen!25] (a2) at (0,1) {
 \begin{tikzpicture}[tree]
 \node {$\{1,2,3\}$};
 \end{tikzpicture}
};

\node[hasse=SageGreen!25] (a3) at (3.8,1) {
 \begin{tikzpicture}[tree]
 \node (a) at (0,0) {$2$};
 \node (b) at (-0.35,0.6) {$1$};
 \node (c) at (0.35,0.6) {$3$};
 \draw[edge] (b)--(a)--(c);
 \end{tikzpicture}
};

\node[hasse=SlateBlue!25] (b1) at (1.85,3.1) {
 \begin{tikzpicture}[tree]
 \node (a) at (0,0) {$\{1,2\}$};
 \node (b) at (0,0.7) {$3$};
 \draw[edge] (a)--(b);
 \end{tikzpicture}
};

\node[hasse=DirtyYellow!25] (b2) at (1.85,-1.1) {
 \begin{tikzpicture}[tree]
 \node (a) at (0,0) {$\{2,3\}$};
 \node (b) at (0,0.7) {$1$};
 \draw[edge] (a)--(b);
 \end{tikzpicture}
};

\node[hasse=DirtyYellow!25] (b3) at (-1.85,-1.1) {
 \begin{tikzpicture}[tree]
 \node (a) at (0,0) {$3$};
 \node (b) at (0,0.7) {$\{1,2\}$};
 \draw[edge] (b)--(a);
 \end{tikzpicture}
};

\node[hasse=EggShell!25] (c2) at (0,-3) {
 \begin{tikzpicture}[tree]
 \node (a) at (0,0) {$3$};
 \node (b) at (0,0.6) {$2$};
 \node (c) at (0,1.2) {$1$};
 \draw[edge] (a)--(b)--(c);
 \end{tikzpicture}
};

\node[hasse=SageGreen!25] (d1) at (-3.8,1) {
 \begin{tikzpicture}[tree]
 \node (a) at (0,0) {$3$};
 \node (b) at (0,0.6) {$1$};
 \node (c) at (0,1.2) {$2$};
 \draw[edge] (a)--(b)--(c);
 \end{tikzpicture}
};

\draw[edge]
 (c2)--(b3)--(a2)--(b2)--(c2);
 
\draw[edge]
(b3)--(d1)--(a1)--(c1)--(b1)--(a3)--(b2);
\draw[edge]
(a2)--(b1);
\draw[edge]
(a2)--(a1);
\end{tikzpicture}}
\end{center}
\caption{On the left, the generalised flip order and on the right the facial weak order on the constructs of the hypergraph $\mathbf{C}^{1<2<3}=\{\{1\},\{2\},\{3\},\{1,2\}, \{1,2,3\}\}$, whose polytope is the 2-dimensional hypercube. \\
Note that $\set{1,2,3}\leq_{\mathit{FWO}}\set{1,3}(2) \;\mbox{while}\; \set{1,2,3}\not\leq_{\mathit{GFO}}\set{1,3}(2).$} \label{WvsF}
\end{figure}

In the following proposition, we show that (our version of) the facial weak order is an extension of the generalised flip order. 
\begin{prop}\label{ProofWeakOrder}
If $S \leq_{\mathit{GFO}} T$, then $S \leq_{\mathit{FWO}} T$.
\end{prop}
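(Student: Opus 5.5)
The plan is to reduce the statement to coverings. Since $\leq_{\mathit{GFO}}$ is the reflexive–transitive closure of $\lessdot$ and $\leq_{\mathit{FWO}}$ is transitive, it suffices to show that $S\lessdot T$ implies $S\leq_{\mathit{FWO}} T$. I will in fact aim for $S\lessdot_{\mathit{FWO}} T$.

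Throughout I work with ${\bf H}$ hereditarily ordered, as in Lemma \ref{lemmeWeak}. That lemma lets me replace $\min\set{S}$ and $\max\set{S}$ by the explicit constructions $\bot_S$ and $\top_S$ (Remark \ref{min-equals-bot}). I will also need to turn GFO-comparisons between constructions into flip-comparisons. For this I use Proposition \ref{GFO-FO}(2): if $S'\lessdot^+T'$ between constructions, then $S'\lessdot^+_{\operatorname{flip}}T'$.

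\emph{Fusion case.} Suppose $S\subsetdot_{U,V}T$ with $U>V$, $U$ the parent of $V$ in $S$. Then $S$ is a facet of $T$, and I must check two things.

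First, $\bot_S=\bot_T$. I will use the recursive description of $\bot$ from Lemma \ref{lemmeWeak}. In $T$, the node $U\cup V$ is squashed repeatedly, each time putting the current maximum at the top. Because every element of $U$ exceeds every element of $V$, all elements of $U$ are extracted before any element of $V$. Once $U$ is exhausted, the remaining piece of $V$ sits exactly where $V$ sat in $S$. So the recursion for $\bot_T$ retraces the recursion for $\bot_S$ (node $U$ first, then its child $V$). To make this precise I will induct on the number of nodes, using that squashing commutes with processing disjoint subtrees.

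Second, $\top_S<_{\operatorname{flip}}\top_T$. Since $S\subseteq T$, we have $\top_S\in\downarrow^{\subseteq}\!(T)$, and $\top_T$ is the GFO-maximum of that set by Lemma \ref{lemmeWeak}. Hence $\top_S\leq_{\mathit{GFO}}\top_T$. Both are constructions, so Proposition \ref{GFO-FO}(2) gives $\top_S\leq_{\operatorname{flip}}\top_T$.

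For strictness I compare the two constructions directly. In $\top_T$, squashing $U\cup V$ with its minimum on top places $\min(V)=\min(U\cup V)$ above all of $U$ along the corresponding path. In $\top_S$, the elements of $U$ lie above those of $V$, since $U$ is processed first. So $\top_S\neq\top_T$, and $S\lessdot_{\mathit{FWO}}T$ by the first clause of Definition \ref{fwo-def}.

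\emph{Split case.} Suppose $T\subsetdot_{V,U}S$, again with $U>V$. Then $T$ is a facet of $S$. The argument is dual: the $\top$ computations coincide because the minima are taken from $V$ first, and $\bot_S\leq_{\mathit{GFO}}\bot_T$ because $\bot_S$ is the minimum of $\downarrow^{\subseteq}\!(S)\ni\bot_T$. Strictness follows as before, now using $\max(U)$. This gives the second clause of Definition \ref{fwo-def}.

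The main obstacle I expect is the equality $\bot_S=\bot_T$ (and dually $\top_S=\top_T$). This needs a careful bookkeeping argument that the squashing recursion of Lemma \ref{lemmeWeak} on the merged node $U\cup V$ distributes the subconstructs exactly as in $S$. That requires the connected components of ${\bf H}$ minus the already-extracted elements of $U$ to match those appearing in $S$ below $U$. I would try to establish this by induction, invoking the hereditarily ordered hypothesis to control the disintegrations.
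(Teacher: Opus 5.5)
Your proposal is correct and follows the paper's proof. You reduce to coverings; in the fusion case you get $\min\{S\}=\min\{T\}$ from $\max(U\cup V)=\max(U)$, the weak inequality $\max\{S\}\leq\max\{T\}$ from $S\subseteq T$ (via Lemma \ref{lemmeWeak} and Proposition \ref{GFO-FO}), and strictness from $\min(U\cup V)=\min(V)\neq\min(U)$; the split case is dual. One small precision: in $\top_S$ only $\min(U)$, not every element of $U$, is guaranteed to be an ancestor of $\min(V)$, but that is exactly what contradicts $\min(V)$ being an ancestor of $\min(U)$ in $\top_T$.
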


\begin{proof}[Proof of Proposition \ref{ProofWeakOrder}]
It is sufficient to prove the implication for the covering relations.
We observe that, by Remark \ref{min-equals-bot} and Proposition \ref{GFO-FO}, we can safely replace $\min\set{S}$ and $\min\set{T}$ with $\bot_S$ and $\bot_T$, and $\leq_{\operatorname{flip}}$ by $\leq_{\mathit{GFO}}$ in Definition \ref{fwo-def}.
If $S \lessdot T$ is a $(X,Y)$-fusion, then we have that $S \subseteq T$, and
 $S$ and $T$ coincide below the node containing $X$. Moreover, as $Y<X$, $\max(X \cup Y) = \max(X)$. One deduces easily from these observations that $\min\set{S}=\min\set{T}$.

To deal with the second inequality, let us note that, as $S$ is a facet of $T$, $\max\set{S} \leq \max\set{T}$ by definition. Once again, $S$ and $T$ coincide below the node containing $X$ and $\min(Y \cup Y)=\min(Y) \neq \min(X)$, hence the inequality is strict.

The case where $S <_{\mathit{GFO}} T$ is a split is treated in the same way.
\end{proof}

\subsection{Generalised Tamari order}
In \cite{RoncoGTO}, Mar\'{\i}a Ronco introduces a relation on all faces of ordered graph associahedra, i.e., graph associahedra for which additionally a total order on the set of vertices is given, which she calls generalised Tamari order. She defines
a product on some families of graph associahedra satisfying some conditions. We refer to \cite{PLBJ2} for a comparison of her conditions with the ones considered in our works.
Her definition of product is given by means of restrictions of tubings, like we do in Proposition \ref{restriction-product-characterisation}. She gives a characterisation of this product in terms of intervals like we do in Theorem \ref{interval-product}. 
In fact, her work served as inspiration for ours. We note the following differences with respect to our work, though.
\begin{itemize}
\item Our framework is not limited to graph associahedra and takes place in the more general setting of nestohedra. 
\item Our characterisation via intervals extends to a tridendriform structure decomposing the product. In doing so (cf. \eqref{eq:equalityProdB}), we were inspired by the work of Palacios and Ronco in which the authors worked out the case of associahedra and permutohedra in detail \cite{PalaciosRonco}.
\item Our generalised flip order differs from her general Tamari order, as we show in Figure \ref{Rdiff}. The discrepancy already appears for linear graphs whose graph-theoretic ordering of vertices is not compatible with the ambient linear order. We expect that Ronco's covering relations coincide with ours for associahedra equipped with their standard ordering, that is, when the chosen linear order agrees with the order induced by the underlying linear graph. But one cannot hope to induce a preferred total order on the vertices from a graph structure in general.
\end{itemize}
 
\begin{figure}
\begin{center}
\resizebox{4.5cm}{!}{\begin{tikzpicture}[scale=0.85,
 hasse/.style={
 draw,
 rounded corners,
 fill=#1,
 inner sep=2pt
 },
 tree/.style={font=\footnotesize},
 edge/.style={thick}
]

\node[hasse=espresso!25] (c1) at (0,5) {
 \begin{tikzpicture}[tree]
 \node (a) at (0,0) {$1$};
 \node (b) at (0,0.6) {$2$};
 \node (c) at (0,1.2) {$3$};
 \draw[edge] (a)--(b)--(c);
 \end{tikzpicture}
};

\node[hasse=SageGreen!25] (a1) at (-3.8,1) {
 \begin{tikzpicture}[tree]
 \node (a) at (0,0) {$1$};
 \node (b) at (0,0.6) {$3$};
 \node (c) at (0,1.2) {$2$};
 \draw[edge] (a)--(b)--(c);
 \end{tikzpicture}
};

\node[hasse=SageGreen!25] (a2) at (0,1) {
 \begin{tikzpicture}[tree]
 \node {$\{1,2,3\}$};
 \end{tikzpicture}
};

\node[hasse=SlateBlue!25] (a3) at (3.8,1) {
 \begin{tikzpicture}[tree]
 \node (a) at (0,0) {$2$};
 \node (b) at (0,0.7) {$\{1,3\}$};
 \draw[edge] (b)--(a);
 \end{tikzpicture}
};

\node[hasse=DustyRed!25] (b1) at (1.65,3.5) {
 \begin{tikzpicture}[tree]
 \node (a) at (0,0) {$\{1,2\}$};
 \node (b) at (0,0.7) {$3$};
 \draw[edge] (a)--(b);
 \end{tikzpicture}
};

\node[hasse=SlateBlue!25] (e1) at (-1.65,3.5) {
 \begin{tikzpicture}[tree]
 \node (a) at (0,0) {$1$};
 \node (b) at (0,0.7) {$\{2,3\}$};
 \draw[edge] (a)--(b);
 \end{tikzpicture}
};

\node[hasse=DirtyYellow!25] (b2) at (1.65,-1.5) {
 \begin{tikzpicture}[tree]
 \node (a) at (0,0) {$\{2,3\}$};
 \node (b) at (0,0.7) {$1$};
 \draw[edge] (a)--(b);
 \end{tikzpicture}
};

\node[hasse=DirtyYellow!25] (e2) at (-1.65,-1.5) {
 \begin{tikzpicture}[tree]
 \node (a) at (0,0) {$\{1,3\}$};
 \node (b) at (0,0.7) {$2$};
 \draw[edge] (a)--(b);
 \end{tikzpicture}
};

\node[hasse=EggShell!25] (c2) at (0,-3) {
 \begin{tikzpicture}[tree]
 \node (a) at (0,0) {$3$};
 \node (b) at (-0.35,0.6) {$1$};
 \node (c) at (0.35,0.6) {$2$};
 \draw[edge] (b)--(a)--(c);
 \end{tikzpicture}
};

\node[hasse=SageGreen!25] (d1) at (2.65,-0.25) {
 \begin{tikzpicture}[tree]
 \node (a) at (0,0) {$2$};
 \node (b) at (0,0.6) {$3$};
 \node (c) at (0,1.2) {$1$};
 \draw[edge] (a)--(b)--(c);
 \end{tikzpicture}
};

\node[hasse=MutedLavender!25] (d2) at (2.65,2.25) {
 \begin{tikzpicture}[tree]
 \node (a) at (0,0) {$2$};
 \node (b) at (0,0.6) {$1$};
 \node (c) at (0,1.2) {$3$};
 \draw[edge] (a)--(b)--(c);
 \end{tikzpicture}
};

\draw[edge]
(b2)--(c2);
 \draw[edge]
(e1)--(a2);
\draw[edge]
(b2)--(d1)--(a3)--(d2)--(b1)--(c1)--(e1)--(a1)--(e2)--(c2);
\draw[edge]
(b1)--(a2)--(b2);
\end{tikzpicture}}
\hspace{1cm}
\resizebox{4.5cm}{!}{\begin{tikzpicture}[scale=0.85,
 hasse/.style={
 draw,
 rounded corners,
 fill=#1,
 inner sep=2pt
 },
 tree/.style={font=\footnotesize},
 edge/.style={thick}
]

\node[hasse=espresso!25] (c1) at (0,5) {
 \begin{tikzpicture}[tree]
 \node (a) at (0,0) {$1$};
 \node (b) at (0,0.6) {$2$};
 \node (c) at (0,1.2) {$3$};
 \draw[edge] (a)--(b)--(c);
 \end{tikzpicture}
};

\node[hasse=SageGreen!25] (a1) at (-3.8,1) {
 \begin{tikzpicture}[tree]
 \node (a) at (0,0) {$1$};
 \node (b) at (0,0.6) {$3$};
 \node (c) at (0,1.2) {$2$};
 \draw[edge] (a)--(b)--(c);
 \end{tikzpicture}
};

\node[hasse=SageGreen!25] (a2) at (0,1) {
 \begin{tikzpicture}[tree]
 \node {$\{1,2,3\}$};
 \end{tikzpicture}
};

\node[hasse=SlateBlue!25] (a3) at (3.8,1) {
 \begin{tikzpicture}[tree]
 \node (a) at (0,0) {$2$};
 \node (b) at (0,0.7) {$\{1,3\}$};
 \draw[edge] (b)--(a);
 \end{tikzpicture}
};

\node[hasse=DustyRed!25] (b1) at (1.65,3.5) {
 \begin{tikzpicture}[tree]
 \node (a) at (0,0) {$\{1,2\}$};
 \node (b) at (0,0.7) {$3$};
 \draw[edge] (a)--(b);
 \end{tikzpicture}
};

\node[hasse=SlateBlue!25] (e1) at (-1.65,3.5) {
 \begin{tikzpicture}[tree]
 \node (a) at (0,0) {$1$};
 \node (b) at (0,0.7) {$\{2,3\}$};
 \draw[edge] (a)--(b);
 \end{tikzpicture}
};

\node[hasse=DirtyYellow!25] (b2) at (1.65,-1.5) {
 \begin{tikzpicture}[tree]
 \node (a) at (0,0) {$\{2,3\}$};
 \node (b) at (0,0.7) {$1$};
 \draw[edge] (a)--(b);
 \end{tikzpicture}
};

\node[hasse=DirtyYellow!25] (e2) at (-1.65,-1.5) {
 \begin{tikzpicture}[tree]
 \node (a) at (0,0) {$\{1,3\}$};
 \node (b) at (0,0.7) {$2$};
 \draw[edge] (a)--(b);
 \end{tikzpicture}
};

\node[hasse=EggShell!25] (c2) at (0,-3) {
 \begin{tikzpicture}[tree]
 \node (a) at (0,0) {$3$};
 \node (b) at (-0.35,0.6) {$1$};
 \node (c) at (0.35,0.6) {$2$};
 \draw[edge] (b)--(a)--(c);
 \end{tikzpicture}
};

\node[hasse=SageGreen!25] (d1) at (2.65,-0.25) {
 \begin{tikzpicture}[tree]
 \node (a) at (0,0) {$2$};
 \node (b) at (0,0.6) {$3$};
 \node (c) at (0,1.2) {$1$};
 \draw[edge] (a)--(b)--(c);
 \end{tikzpicture}
};

\node[hasse=MutedLavender!25] (d2) at (2.65,2.25) {
 \begin{tikzpicture}[tree]
 \node (a) at (0,0) {$2$};
 \node (b) at (0,0.6) {$1$};
 \node (c) at (0,1.2) {$3$};
 \draw[edge] (a)--(b)--(c);
 \end{tikzpicture}
};


 \draw[edge]
(e1)--(a2);
\draw[edge]
(b2)--(d1)--(a3)--(d2)--(b1)--(c1)--(e1)--(a1)--(e2)--(c2);
\draw[edge]
(b1)--(a2)--(b2);
\end{tikzpicture}} 
\end{center} 
\caption{On the left, the generalised flip order, and on the right, the generalised Tamari order, on the constructs of the hypergraph $\hyper{K}^{1<3<2}=\set{\set{1},\set{2},\set{3},\set{1,3},\set{3,2}}$, whose polytope is the 2-dimensional associahedron. Notice that the natural order $1<2<3$ does not agree with the order $1<3<2$ induced by the linear graph structure. We have $3(1,2)\lessdot \set{2,3}(1)$, but the pair
$(3(1,2),\set{2,3}(1))$ is not in Ronco's relation (for the same choice of total order on the vertices of $\hyper{H}$). To see this, we translate the constructs in the language of tubings, giving $\set{\set{1},\set{2}}$ and $\set{\set{1}}$, respectively (note that Ronco takes as convention not to include the full tube $\set{1,2,3}$ in the tubings). Then we should be in one of the two situations described in case (1) of Definition 4.2 in \cite{RoncoGTO}. But setting $T=\set{\set{1},\set{2}}$ and $t=\set{2}$, we see that clause (a) does not apply since $t$ is maximal in $T$, and that clause (b) does not apply either as $\min(t)=2\neq 1$.}\label{Rdiff}
\end{figure}

\subsection{Future work}

This work suggests several research tracks. We list some of them below.

\begin{enumerate}
\item Krob-Latapy-Novelli-Phan-Schwer \cite{KLNPS} prove that the generalised flip order on permutohedra is a lattice. Dermenjian-Hohlweg-Pilaud \cite{DHP-WFO} prove the same result for the facial boolean lattice on the faces of the cube and for the facial Cambrian lattice on the faces of the corresponding generalised associahedron. Moreover, Barnard-McConville \cite{BM} exhibit some flip orders which are not a lattice and some conjectural forbidden motives in the associated graphs. The examples of generalised flip order presented in this article are all lattices, but the ones of Figure 8 and Figure 10. Note that the example of Figure 10 is not associated with one of Barnard-McConville's forbidden motives: the poset restricted to constructions is a lattice. Several questions arise from these examples:
\begin{itemize}
\item Are the generalised flip orders associated to right-filled hypergraphs also lattices, as it is the case for Barnard-McConville's flip order?
\item The example of Figure 10 shows that the flip order can be a lattice, without the generalised flip order being one. Is the converse true? Is there an example of hypergraph whose generalised flip order is a lattice but whose restriction to constructions is not a lattice? 
\item More generally, what conditions on the hypergraph ${\bf H}$ make the generalised flip order a lattice?
\item In this case, could we find a characterisation of the supremum?
\end{itemize}
\item In \cite{DHP-WFO}, Dermenjian-Hohlweg-Pilaud introduced  several equivalent definitions for the facial weak order. In Proposition \ref{ProofWeakOrder}, we have related one of their definitions to our generalised flip order. Even though these two definitions coincide on the permutohedra, they are not equivalent in general. This raises two orthogonal research directions:
\begin{itemize}
\item Can we characterise the polytopes on which the two orders coincide?
\item  Can one of the definitions of Dermenjian-Hohlweg-Pilaud be adapted to obtain an order equivalent to the one presented here? 
\end{itemize} 
\item The cardinality of the set of constructs of a hypergraph {\bf H} is computed inductively: would there exist a closed formula depending only on the shape of {\bf H}?
\item The proof of the link between the flip order and the generalised flip order in Section \ref{2.2} is reminiscent of a shellability proof as we endow chains in the poset with a weight. It differs from a EL-labelling as split covering relations are not labelled. However, would it be possible to find a suitable labelling for split covering relations which would endow these posets with an EL-labelling? More generally, what can be said on the topological properties of the generalised flip order?
\item Another natural direction concerns the limitations of the interval description of the shuffle product established in this article. As Example \ref{hcf} suggests, such a description does not work in the setting of hypercubes. It would be interesting to determine where exactly our order-theoretic results fail outside of the setting given by strict clans.
\item Our hereditarily ordered hypothesis plays a pervasive role in this work (except for the proof of is cycle-freeness of the GFO). This condition is undebatable when it comes to define our polydendriform products inductively, since we need the induced teams to be ordered, starting from an ordered team. But we may ask if, setting this definition aside, the equivalence between the other two non-inductive definitions (based on restrictions and intervals, respectively) would still hold without this condition, or imposing a milder one. We raise the same question for the coincidence of GFO -- Barnard-McConville's flip order on 0-dimensional faces.
 \end{enumerate}

 \section*{Appendix}
 This appendix contains three pictures illustrating the GFO. They have been instrumental for the genesis of its characterisation in terms of generalised inversions in Section \ref{inversion-subsection}.
\begin{figure}
\centering
\resizebox{9cm}{!}
 {\begin{tikzpicture}[scale=0.85,  
  hasse/.style={
    draw,
    rounded corners,
    fill=#1,
    inner sep=1.75pt
  },
  tree/.style={font=\footnotesize},
]

\node[hasse=espresso!25] (r0) at (0,-15) {
  \begin{tikzpicture}[tree]
    \node (a) at (0,0) {$4$};
    \node (b) at (0,0.6) {$3$};
    \node (c) at (-0.35,1.2) {$1$};
 \node (d) at (0.35,1.2) {$2$};
    \draw[thick] (a)--(b)--(c); \draw[thick](b)--(d);
  \end{tikzpicture}
};

 \node[hasse=espresso!50!EggShell!40] (r1) at (-3.5,-12) {
  \begin{tikzpicture}[tree]
  \node (a) at (0,0) {$4$};
    \node (b) at (0,0.6) {$\{2,3\}$};
    \node (c) at (0,1.2) {$1$};
    \draw[thick] (a)--(b)--(c);  
  \end{tikzpicture}
};

\node[hasse=espresso!45!EggShell!35] (r2) at (-3,-9) {
  \begin{tikzpicture}[tree]
   \node (a) at (0,0) {$4$};
    \node (b) at (0,0.6) {$2$};
    \node (c) at (-0.35,1.2) {$1$};
 \node (d) at (0.35,1.2) {$3$};
    \draw[thick] (a)--(b)--(c); \draw[thick](b)--(d);
  \end{tikzpicture}
};

\node[hasse=espresso!40!EggShell!30] (r3) at (2,-6) {
  \begin{tikzpicture}[tree]
  \node (a) at (0,0) {$4$};
    \node (b) at (0,0.6) {$\{1,2\}$};
    \node (c) at (0,1.2) {$3$};
    \draw[thick] (a)--(b)--(c);  
  \end{tikzpicture}
};

\node[hasse=espresso!5!EggShell!25] (r7) at (5.5,4) {
  \begin{tikzpicture}[tree]
  \node (a) at (0,0) {$1$};
    \node (b) at (0,0.6) {$\{2,4\}$};
    \node (c) at (0,1.2) {$3$};
    \draw[thick] (a)--(b)--(c);  
  \end{tikzpicture}
};

\node[hasse=EggShell!25] (r8) at (0,7) {
  \begin{tikzpicture}[tree]
    \node (a) at (0,0) {$1$};
    \node (b) at (0,0.6) {$2$};
    \node (c) at (0,1.2) {$4$};
 \node (d) at (0,1.8) {$3$};
    \draw[thick] (a)--(b)--(c)--(d);
  \end{tikzpicture}
};

\node[hasse=EggShell!15] (r9) at (-0,10.5) {
  \begin{tikzpicture}[tree]
    \node (a) at (0,0) {$1$};
    \node (b) at (0,0.6) {$2$};
    \node (c) at (0,1.2) {$\{3,4\}$};
    \draw[thick] (a)--(b)--(c);
  \end{tikzpicture}
};

\node[hasse=EggShell!5] (r10) at (0,14) {
  \begin{tikzpicture}[tree]
    \node (a) at (0,0) {$1$};
    \node (b) at (0,0.6) {$2$};
    \node (c) at (0,1.2) {$3$};
 \node (d) at (0,1.8) {$4$};
    \draw[thick] (a)--(b)--(c)--(d);
  \end{tikzpicture}
};

\node[hasse=espresso!50!EggShell!40] (l1) at (0,-12) {
  \begin{tikzpicture}[tree]
    \node (a) at (0,0) {$\{3,4\}$};
    \node (b) at (-0.35,0.6) {$1$};
    \node (c) at (0.35,0.6) {$2$};
    \draw[thick] (b)--(a)--(c);  
  \end{tikzpicture}
};

\node[hasse=espresso!45!EggShell!35] (l2) at (6,-9) {
  \begin{tikzpicture}[tree]
  \node (a) at (0,0) {$\{1,3,4\}$};
    \node (b) at (0,0.6) {$2$};
      \draw[thick] (a)--(b);  
  \end{tikzpicture}
};

\node[hasse=espresso!40!EggShell!30] (l3) at (4.25,-6) {
  \begin{tikzpicture}[tree]
    \node (a) at (0,0) {$\{1,3\}$};
    \node (b) at (-0.35,0.6) {$2$};
    \node (c) at (0.35,0.6) {$4$};
    \draw[thick] (b)--(a)--(c);  
  \end{tikzpicture}
};


\node[hasse=espresso!50!EggShell!40] (m0) at (4.5,-12) {
  \begin{tikzpicture}[tree]
    \node (a) at (0,0) {$4$};
    \node (b) at (0,0.6) {$\{1,3\}$};
    \node (c) at (0,1.2) {$2$};
    \draw[thick] (a)--(b)--(c);  
  \end{tikzpicture}
};

\node[hasse=espresso!5!EggShell!25] (m1) at (9.1,3) {
  \begin{tikzpicture}[tree]
  \node (a) at (0,0) {$1$};
    \node (b) at (0,0.6) {$\{3,4\}$};
    \node (c) at (0,1.2) {$2$};
    \draw[thick] (a)--(b)--(c);  
  \end{tikzpicture}
};

\node[hasse=EggShell!25] (m2) at (7.5,7) {
  \begin{tikzpicture}[tree]
   \node (a) at (0,0) {$1$};
    \node (b) at (0,0.6) {$3$};
    \node (c) at (-0.35,1.2) {$2$};
 \node (d) at (0.35,1.2) {$4$};
    \draw[thick] (a)--(b)--(c); \draw[thick](b)--(d);
  \end{tikzpicture}
};

\node[hasse=EggShell!15] (m3) at (4,10.5) {
  \begin{tikzpicture}[tree]
    \node (a) at (0,0) {$1$};
    \node (b) at (0,0.6) {$\{2,3\}$};
    \node (c) at (0,1.2) {$4$};
    \draw[thick] (a)--(b)--(c);  
  \end{tikzpicture}
};

\node[hasse=espresso!45!EggShell!35] (m4) at (3,-9) {
  \begin{tikzpicture}[tree]
  \node (a) at (0,0) {$4$};
    \node (b) at (0,0.6) {$\{1,2,3\}$};
      \draw[thick] (a)--(b);  
  \end{tikzpicture}
};

\node[hasse=espresso!45!EggShell!35] (22) at (-6.5,-9) {
  \begin{tikzpicture}[tree]
  \node (a) at (0,0) {$\{2,3,4\}$};
    \node (b) at (0,0.6) {$1$};
      \draw[thick] (a)--(b);  
  \end{tikzpicture}
};

\node[hasse=espresso!45!EggShell!35] (24) at (0,-9) {
  \begin{tikzpicture}[tree]
    \node (a) at (0,0) {$3$};
    \node (b) at (-0.35,0.6) {$1$};
    \node (c) at (0.35,0.6) {$4$};
 \node (d) at (0,0.6) {$2$};
    \draw[thick] (b)--(a)--(c); \draw[thick] (a)--(d);
  \end{tikzpicture}
};

\node[hasse=espresso!40!EggShell!30] (32) at (-5.25,-6) {
  \begin{tikzpicture}[tree]
    \node (a) at (0,0) {$\{2,3\}$};
    \node (b) at (-0.35,0.6) {$1$};
    \node (c) at (0.35,0.6) {$4$};
    \draw[thick] (b)--(a)--(c);  
  \end{tikzpicture}
};

\node[hasse=espresso!35!EggShell!25] (41) at (-2,-3.5) {
  \begin{tikzpicture}[tree]
  \node (a) at (0,0) {$\{1,2,3\}$};
    \node (b) at (0,0.6) {$4$};
      \draw[thick] (a)--(b);  
  \end{tikzpicture}
};

\node[hasse=espresso!5!EggShell!25] (72) at (-6,4) {
  \begin{tikzpicture}[tree]
  \node (a) at (0,0) {$\{1,2\}$};
    \node (b) at (0,0.6) {$3$};
    \node (c) at (0,1.2) {$4$};
    \draw[thick] (a)--(b)--(c);  
  \end{tikzpicture}
};

\node[hasse=espresso!15!EggShell!25] (63) at (-8.5,1.5) {
  \begin{tikzpicture}[tree]
  \node (a) at (0,0) {$2$};
    \node (b) at (-0.35,0.6) {$1$};
\node (c) at (0.35,0.6) {$3$};
\node (d) at (0.35,1.2) {$4$};
      \draw[thick] (d)--(c)--(a)--(b);  
  \end{tikzpicture}
};

\node[hasse=espresso!40!EggShell!30] (33) at (0,-6) {
  \begin{tikzpicture}[tree]
    \node (a) at (0,0) {$\{2,4\}$};
    \node (b) at (-0.35,0.6) {$1$};
    \node (c) at (0.35,0.6) {$3$};
    \draw[thick] (b)--(a)--(c);  
  \end{tikzpicture}
};
\node[hasse=espresso!40!EggShell!30] (36) at (-2.25,-6) {
  \begin{tikzpicture}[tree]
  \node (a) at (0,0) {$\{1,2,3,4\}$};
  \end{tikzpicture}
};

\node[hasse=espresso!35!EggShell!25] (411) at (2,-3.5) {
  \begin{tikzpicture}[tree]
  \node (a) at (0,0) {$\{1,2,4\}$};
    \node (b) at (0,0.6) {$3$};
      \draw[thick] (a)--(b);  
  \end{tikzpicture}
};

\node[hasse=espresso!35!EggShell!25] (43) at (-5,-3.5) {
  \begin{tikzpicture}[tree]
  \node (a) at (0,0) {$2$};
    \node (b) at (-0.35,0.6) {$1$};
\node (c) at (0.35,0.6) {$4$};
\node (d) at (0.35,1.2) {$3$};
      \draw[thick] (d)--(c)--(a)--(b);  
  \end{tikzpicture}
};

\node[hasse=EggShell!25] (44) at (5.5,6.75) {
  \begin{tikzpicture}[tree]
  \node (a) at (0,0) {$1$};
    \node (b) at (0,0.6) {$\{2,3,4\}$};
      \draw[thick] (a)--(b);  
  \end{tikzpicture}
};

\node[hasse=espresso!25!EggShell!25] (54) at (-8.5,-1.5) {
  \begin{tikzpicture}[tree]
  \node (a) at (0,0) {$2$};
    \node (b) at (-0.35,0.6) {$1$};
    \node (c) at (0.35,0.6) {$\{3,4\}$};
    \draw[thick] (b)--(a)--(c);  
  \end{tikzpicture}
};

\node[hasse=espresso!25!EggShell!25] (53) at (-2,-1) {
  \begin{tikzpicture}[tree]
  \node (a) at (0,0) {$\{1,2\}$};
    \node (b) at (0,0.6) {$4$};
    \node (c) at (0,1.2) {$3$};
    \draw[thick] (a)--(b)--(c);  
  \end{tikzpicture}
};

\node[hasse=espresso!15!EggShell!25] (62) at (-6,1.5) {
  \begin{tikzpicture}[tree]
   \node (a) at (0,0) {$\{1,2\}$};
    \node (b) at (0,0.6) {$\{3,4\}$};
    \draw[thick] (a)--(b);
  \end{tikzpicture}
};

\node[hasse=espresso!35!EggShell!25!SlateBlue!50] (r4) at (6,-3.5) {
  \begin{tikzpicture}[tree]
   \node (a) at (0,0) {$4$};
    \node (b) at (0,0.6) {$1$};
    \node (c) at (-0.35,1.2) {$2$};
 \node (d) at (0.35,1.2) {$3$};
    \draw[thick] (a)--(b)--(c); \draw[thick](b)--(d);
  \end{tikzpicture}
};

\node[hasse=espresso!25!EggShell!25!SlateBlue!30] (r5) at (6.8,-1) {
  \begin{tikzpicture}[tree]
    \node (a) at (0,0) {$\{1,4\}$};
    \node (b) at (-0.35,0.6) {$2$};
    \node (c) at (0.35,0.6) {$3$};
    \draw[thick] (b)--(a)--(c);  
  \end{tikzpicture}
};
 
\node[hasse=espresso!15!EggShell!25!SlateBlue!10] (r6) at (7.25,1) {
  \begin{tikzpicture}[tree]
   \node (a) at (0,0) {$1$};
    \node (b) at (0,0.6) {$4$};
    \node (c) at (-0.35,1.2) {$2$};
 \node (d) at (0.35,1.2) {$3$};
    \draw[thick] (a)--(b)--(c); \draw[thick](b)--(d);
  \end{tikzpicture}
};
 
\draw[very thick,Graphite!70] (r0)--(r1)--(r2)--(r3);
\draw[very thick,DustyRed!70] (r3)--(r4);
\draw[very thick,Graphite!70] (r4)--(r5)--(r6)--(r7)--(r8)--(r9)--(r10);
\draw[very thick,Graphite!70] (r0)--(l1)--(l2)--(l3)--(m2)--(m3)--(r10); 
\draw[very thick,Graphite!70] (m1)--(m2)--(m3)--(r10);
\draw[very thick,Graphite!70] (22)--(r1)--(m4)--(r3);
\draw[very thick,Graphite!70] (r0)--(m0)--(l2)--(m1);
\draw[very thick,Graphite!70](m0)--(r4);
\draw[very thick,Graphite!70](22)--(l1)--(24)--(32)--(22)--(36)--(m4);
\draw[very thick,Graphite!70](24)--(l3);
\draw[very thick,Graphite!70](72)--(63)--(32)--(41)--(m3);
\draw[very thick,Graphite!70](41)--(72)--(r10);
\draw[very thick,Graphite!70] (r2)--(33)--(411)--(r7);
\draw[very thick,Graphite!70] (43)--(33);
\draw[very thick,Graphite!70] (m1)--(44)--(36)--(41);
\draw[very thick,Graphite!70] (r9)--(44)--(m3);
\draw[very thick,Graphite!70] (63)--(54)--(62)--(53)--(43)--(54)--(22);
\draw[very thick,Graphite!70] (r9)--(62)--(72);
\draw[very thick,Graphite!70](r3)--(411)--(53);
\draw[very thick,Graphite!70] (36)--(62);
\draw[very thick,Graphite!70] (r6)--(m1);
\draw[very thick,Graphite!70] (r8)--(53);
 
\end{tikzpicture}}
\caption{The Hasse diagram of the GFO for the hypergraph $\hyper{H}^\maltese=\{\{1\},\{2\},\{3\},\{4\},\{3,4\}, \{1,2,3\},\{2,3,4\}\}$, which is hereditarily ordered, but not right-filled. A witness of the failure of Theorem \ref{gfo-inv} for ${\bf H}$ is the covering relation highlighted in red, which abolishes the good pair $(2,3)$ of the bottom construct.}
\label{hassediag3}
\end{figure}
\begin{figure}
\centering
\input{big_diag_rf}
\caption{The Hasse diagram of the GFO for the right-filled (and hence hereditarily ordered) hypergraph $\hyper{H}^{\text{\ding{170}}}= \hyper{H}^\maltese\cup \{2,3\}=\{\{1\},\{2\},\{3\},\{4\},\{2,3\},\{3,4\},\{1,2,3\},\{2,3,4\}\}$. The blue subposet of the GFO for $\hyper{H}\cup \{2,3\}$ is the replacement of the blue subposet of the GFO for ${\bf H}$. Since $\{2,3\}$ is now connected, the good pair $(2,3)$ of the construct $4(\{1,2\}(3))$ does not get abolished by the $(1,2)$-splitting of the node $\{1,2\}$; the corresponding covering relation is now highlighted in green.}
\label{hassediag5}
\end{figure}
\begin{figure}
\input{big_diag1}
\caption{The Hasse diagram of the GFO for $\hyper{H}^\dag=\{\{1\},\{2\},\{3\},\{4\},\{2,3\}, \{3,1\}, \{1,4\}\}$ of Example \ref{Exple27}, which is not hereditarily ordered}
\label{hassediag4}
\end{figure}
\printbibliography

\end{document}